\numberwithin{equation}{section}
\title{A Noncommutative Borsuk-Ulam Theorem for Natsume-Olsen Spheres}
\author{Benjamin Passer\thanks{Partially supported by National Science Foundation Grants DMS 1300280 and DMS 1363250} \\ Washington University\\ St. Louis, MO 63130 \\ \\bpasser@math.wustl.edu}
\begin{document}


\def\ba{\begin{aligned}}
\def\ea{\end{aligned}}
\def\be{\begin{equation}}
\def\ee{\end{equation}}
\def\beu{\begin{equation*}}
\def\eeu{\end{equation*}}

\def\C{\mathbb{C}}
\def\D{\mathbb{D}}
\def\R{\mathbb{R}}
\def\Rn{\mathbb{R}^n}
\def\S{\mathbb{S}}
\def\Sn{\mathbb{S}^n}
\def\Z{\mathbb{Z}}
\def\T{\mathbb{T}}
\def\N{\mathbb{N}}
\def\RP{\mathbb{RP}}
\def\Q{\mathbb{Q}}

\def\F{\mathfrak{F}}
\def\del{\partial}

\def\!={\neq}
\def\l{\langle}
\def\r{\rangle}

\theoremstyle{definition}

\newtheorem{defn}[equation]{Definition}
\newtheorem{lem}[equation]{Lemma}
\newtheorem{prop}[equation]{Proposition}
\newtheorem{thm}[equation]{Theorem}
\newtheorem{claim}[equation]{Claim}
\newtheorem{ques}[equation]{Question}
\newtheorem{fact}[equation]{Fact}
\newtheorem{axiom}[equation]{Technical Axiom}
\newtheorem{newaxiom}[equation]{New Technical Axiom}
\newtheorem{cor}[equation]{Corollary}
\newtheorem{exam}[equation]{Example}
\newtheorem{conj}[equation]{Conjecture}

\bibliographystyle{plain}
\maketitle

\begin{abstract}
Natsume-Olsen noncommutative spheres are $C^*$-algebras which generalize $C(\S^k)$ when $k$ is odd. These algebras admit natural actions by finite cyclic groups, and if one of these actions is fixed, any equivariant homomorphism between two Natsume-Olsen spheres of the same dimension induces a nontrivial map on odd $K$-theory. This result is an extended, noncommutative Borsuk-Ulam theorem in odd dimension, and just as in the topological case, this theorem has many (almost) equivalent formulations in terms of $\theta$-deformed spheres of arbitrary dimension. In addition, we present theorems on graded Banach algebras, motivated by algebraic Borsuk-Ulam results of A. Taghavi.
\end{abstract}

\section{Introduction}\label{sec:introBU}

The Borsuk-Ulam theorem in algebraic topology states that every continuous map $f: \mathbb{S}^n \to \Rn$ must admit some point $x$ on the sphere $\S^n$ such that $f(x) = f(-x)$. The standard proof (see \cite{ha02}) does not use this form of the theorem, but rather uses a reformulation in terms of maps between two spheres. First, decompose $f$ into even and odd components.
\be\label{eq:dec} \ba f(x) 	&= \frac{f(x) + f(-x)}{2} + \frac{f(x) - f(-x)}{2} \\
 					&:= e(x) + o(x)
\ea \ee
If $f(x)$ is never equal to $f(-x)$, then the map $g(x) = \frac{o(x)}{|o(x)|}$ is defined, odd, and maps $\S^n$ to $\S^{n - 1}$. The restriction of $g(x)$ to the equator $\S^{n - 1}$ is then odd and homotopically trivial. All of the arguments above are reversible, so the theorem has four equivalent forms.
\begin{thm}\label{thm:introBU}{[Borsuk-Ulam]} Each of the following conditions holds for $n \geq 2$.
\begin{enumerate}
  \item \label{classic} If $f: \S^n \to \Rn$ is continuous, then there is some $x \in \S^n$ with $f(x) = f(-x)$.
  \item\label{nohomo} If $o: \S^n \to \Rn$ is continuous and odd, then there is some $x \in \S^n$ with $o(x) = 0$.
  \item \label{dimension}There is no odd, continuous map $g: \S^n \to \S^{n - 1}$.
  \item\label{homo} If $h: \S^{n - 1} \to \S^{n - 1}$ is odd and continuous, then $h$ is homotopically nontrivial.
\end{enumerate}
\end{thm}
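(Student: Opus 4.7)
The plan is to leverage the equivalences already established in the introduction and prove just form (3) of the theorem: that no odd continuous map $g:\S^n\to\S^{n-1}$ exists for $n\geq 2$. I would treat the base case $n=2$ separately via direct lifting, then handle all $n\geq 3$ uniformly via $\Z/2$-cohomology of real projective spaces.

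For $n=2$, the space $\S^2$ is simply connected, so any continuous $g:\S^2\to\S^1$ lifts to $\tilde g:\S^2\to\R$ along the universal cover $t\mapsto e^{2\pi i t}$. Oddness $g(-x)=-g(x)$ then forces the map $x\mapsto\tilde g(-x)-\tilde g(x)-1/2$ to be a continuous $\Z$-valued function on the connected space $\S^2$, hence a constant integer $k$. Substituting $-x$ for $x$ in the identity $\tilde g(-x)=\tilde g(x)+1/2+k$ yields $1+2k=0$, which is impossible for $k\in\Z$.

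For $n\geq 3$, an odd $g$ descends to a continuous $\bar g:\RP^n\to\RP^{n-1}$. A generator of $\pi_1(\RP^n)=\Z/2$ is represented by a path in $\S^n$ joining a point to its antipode; applying $g$ produces a path in $\S^{n-1}$ also joining antipodes, whose projection is a generator of $\pi_1(\RP^{n-1})=\Z/2$. Hence $\bar g_*$ is the nontrivial map on $\pi_1$, so $\bar g^*$ is nontrivial on $H^1(\,\cdot\,;\Z/2)$. Using the ring structure $H^*(\RP^m;\Z/2)\cong\Z/2[x]/(x^{m+1})$, naturality of the cup product gives $\bar g^*(x^n)=x^n$, which is nonzero in $H^n(\RP^n;\Z/2)$ but zero in $H^n(\RP^{n-1};\Z/2)$ for dimensional reasons. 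This contradiction rules out $g$.

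The main obstacle is the bifurcation into two cases: the $n=2$ argument cannot be replaced by the $n\geq 3$ argument because $\RP^1\cong\S^1$ has $\pi_1=\Z$ instead of $\Z/2$, and the cohomology argument collapses at the bottom. A direct covering-space calculation rescues that case, while the cohomology ring of $\RP^m$ with $\Z/2$-coefficients — a standard CW computation I would simply quote — is the essential algebraic-topological input for every higher dimension. Combined with the reductions in the introduction, this establishes all four forms of Theorem~\ref{thm:introBU}.
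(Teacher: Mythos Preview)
Your argument is correct. The paper does not actually prove Theorem~\ref{thm:introBU}; it cites Hatcher~\cite{ha02} and records that the standard proof establishes form~\ref{homo} via the stronger assertion~(\ref{odd degree}) that every odd self-map of $\S^k$ has odd degree. Your route is different: you target form~\ref{dimension} directly, handling $n=2$ by a covering-space lift and $n\geq 3$ via the cup-product structure of $H^*(\RP^m;\Z/2)$. The odd-degree statement is genuinely stronger than mere homotopic nontriviality, and that extra parity information is exactly what the paper later generalizes noncommutatively in Corollary~\ref{cor:BU}; your argument, as written, recovers the four equivalent forms of the theorem but not~(\ref{odd degree}) itself. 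On the other hand, your approach is more self-contained here and avoids the transfer or cellular-chain machinery typically invoked for the odd-degree result.
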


Recall that the degree of a continuous map $f: \S^k \to \S^k$ is defined in terms of the top homology $H_k(\S^k, \Z)$. Since $H_k(\S^k, \Z) \cong \Z$, the induced map $f_*$ on top homology is a homomorphism from $\Z$ to $\Z$, which corresponds to multiplication by a unique integer, called the degree. The degree may be equivalently defined in terms of cohomology, and a homotopically trivial map will have degree zero. The standard proof of the Borsuk-Ulam theorem, which proves version \ref{homo}, uses the following stronger condition. 

\be\label{odd degree}
\textrm{Any odd, continuous self-map of a sphere } \S^k \textrm{ must have odd degree.}
\ee

In the extremely interesting paper \cite{ta12}, A. Taghavi motivates the Borsuk-Ulam theorem in terms of graded algebras over finite abelian groups and presents a proof (and generalization) for the $\S^2$ case in this context. Perhaps the most distinguishing part of his proof is that it deals explicitly with formulation \ref{nohomo} of the theorem, and not formulation \ref{homo}, making particular use of the identification $\R^2 \cong \C$. The role of graded algebras is quite simple: the even/odd decomposition (\ref{eq:dec}) is an example of a grading on $C(\S^2) = C(\S^2, \C)$ by the group $\mathbb{Z}_2$. 

\begin{defn} If $A$ is a Banach algebra and $G$ is a finite group, then $A$ is \textit{G-graded} if it admits a decomposition $A = \bigoplus\limits_{g \in G} A_g$ into closed subspaces which satisfy $A_g \cdot A_h \subset A_{gh}$ for all $g, h \in G$. The elements of $A_g$ are called \textit{homogeneous}, and when $g \not= e$, \textit{nontrivial homogeneous}.
\end{defn}

For convenience, we assume every algebra has scalar field $\C$ and has unit denoted by $1$. When $G = \Z_n$, there is a clear group action by $\mathbb{Z}_n$ on $A$ associated to the grading, where $\omega$ is a primitive $n$th root of unity.

\be\label{grading map}
T: a = (a_0, \ldots , a_{n - 1}) \in A  \mapsto (a_0, \omega a_1, \ldots , \omega^{n - 1} a_{n - 1})
\ee

\noindent In other words, $A_i$ is prescribed as the eigenspace of $T$ for eigenvalue $\omega^i$, and as a result of the graded structure, such a map is not only linear, but also a continuous algebra isomorphism with $T^n = I$. The map $T$ then generalizes the action on $C(\S^1)$ sending $f(\cdot)$ to $f(\omega \cdot)$, and the action of $\Z_n$ on $A$ is described by $k \cdot a = T^k(a)$. Finally, the projections $\pi_j: A \to A_j$ take a form generalizing (\ref{eq:dec}).

\be\label{eq:decomposition}
a_j = \pi_j(a) = \frac{{a + \omega^{-j} \cdot Ta + \omega^{-2j} \cdot T^2a + \ldots + \omega^{-(n-1)j} \cdot T^{n - 1}a}}{n}
\ee

\noindent Of course, one may start with an action of $\mathbb{Z}_n$ and recover a grading by this formula.

 More generally, if $G$ is a compact, abelian, Hausdorff group which acts strongly continuously on a Banach algebra $A$ by $\alpha: G \to \textrm{Aut}(A)$, then for any $\tau$ in the Pontryagin dual $\widehat{G} = \{f: G \to \S^1: f \textrm{ is a continuous homomorphism}\}$, there is a corresponding homogeneous subspace $A_\tau$ defined as follows.
\be
A_\tau = \{a \in A: \textrm{for all } g \in G, \alpha_g(a) = \tau(g)a\}
\ee
If $\mu$ denotes the unique Haar measure on $G$ with $\mu(G) = 1$, then there is a homogeneous component projection defined by $\pi_\tau: A \to A_\tau$ defined by an integral formula.
\be\label{eq:Fourier}
\pi_\tau(a) = \int_G \tau(g^{-1}) \alpha_g(a) \,d\mu \in A_\tau
\ee

The integral above exists because its integrand is a continuous Banach-space valued function (and also bounded because $G$ is compact), and $\mu$ is a finite Borel measure. When the group in question is $\Z_n$, we have that $\widehat{\Z_n}$ is isomorphic to $\Z_n$, generated by a homomorphism which sends $1$ to a primitive $n$th root of unity, so the previous formula generalizes (\ref{eq:decomposition}). The map $a \mapsto (\pi_\tau(a))_{\tau \in \widehat{G}}$ is injective, but we should not expect a nice formula such as $a = \int_{\widehat{G}} \pi_\tau(a)$ (integrating over a suitable Haar measure) to cleanly generalize a graded decomposition $a = \pi_0(a) + \pi_1(a) + \ldots + \pi_{n - 1}(a)$ for a $\Z_n$ action, as such an overreaching statement would imply that every continuous function on the circle has a convergent Fourier series. In particular, $\widehat{\mathbb{S}^1} = \Z$ consists of the homomorphisms $z \mapsto z^n$, $n \in \mathbb{Z}$, and the natural action of $\S^1$ on $C(\S^1)$ by rotation produces the usual Fourier transform from (\ref{eq:Fourier}) in the sense that $\pi_n(f)$ is the function mapping $z \in \S^1$ to $\widehat{f}(n)z^n$. As such, the reconstruction of elements of $A$ from homogeneous components is a process enveloping all of the subtlety of Fourier series in the classical cases, and it is no surprise that dual groups provide a natural setting for a generalized Fourier transform. For more information on the role of group actions (in particular, free actions) on $C^*$-algebras, see \cite{ph87} and \cite{ph09}.

Back in the world of finite groups, Taghavi's proof in \cite{ta12} of the Borsuk-Ulam theorem for $\S^2$ uses $\Z_2$ graded structure from the antipodal map and his Main Theorem 1 to conclude that an odd function $f: \S^2 \to \C \setminus \{0\}$ would have no logarithm, contradicting the fact that the exponential map qualifies $\C$ as the universal cover of $\C \setminus \{0\}$. In section \ref{sec:graded}, we prove a few new results in the same spirit as Taghavi\rq s, focusing on roots instead of logarithms, and relaxing some conditions on the Banach algebra $A$ and its idempotents. 

Next, in section \ref{sec:setup}, we introduce the obvious $\Z_2$ action on Natsume-Olsen odd spheres $C(\S^{2n - 1}_\rho)$, which are $C^*$-algebras defined by T. Natsume and  C. L. Olsen in \cite{na97}, generalizing the work of K. Matsumoto (\cite{ma91}) in dimension three. These spheres are also called $\theta$-deformed (odd) spheres, as they may be reached through M. Rieffel's quantization procedure in \cite{ri93} from an $\R^n$ action which factors through the torus $\T^n$. The main goal is to prove a noncommutative Borsuk-Ulam theorem for these spheres, as M. Yamashita did for the $q$-deformed spheres in \cite{ya13}. We consider different versions of the Borsuk-Ulak theorem as potential candidates for generalization, but some simple counterexamples show that viewing odd functions $\S^k \to \R^k$ in terms of odd elements of the algebra $C(\S^k)$ without noncommutation relations is a fruitless endeavor. However, statement \ref{homo} and (\ref{odd degree}) do generalize nicely to the noncommutative setting using $K$-theory (which aligns well with the $q$-deformed case in \cite{ya13}). This is proved in section \ref{sec:BUR} as Corollary \ref{cor:BU}, repeated here.

\begin{cor}\label{thm:introZ2BU}
Suppose $\Phi: C(\S^{2n - 1}_\rho) \to C(\S^{2n - 1}_\omega)$ is a unital $*$-homomorphism between two Natsume-Olsen spheres of the same dimension. If $\Phi$ is equivariant for the antipodal action, then $\Phi$ induces a nontrivial map on $K_1 \cong \Z$. More precisely, $\Phi_*: \Z \to \Z$ is multiplication by an odd integer.
\end{cor}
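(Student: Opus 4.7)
My approach is to lift the classical argument that odd equivariant self-maps of $\S^{2n-1}$ have odd degree (Theorem~\ref{thm:introBU} item~\ref{homo}, strengthened by \eqref{odd degree}) to the $K_1$ level. I would assemble two ingredients. First, an explicit unitary matrix $U_\rho$ over $C(\S^{2n-1}_\rho)$ whose class generates $K_1 \cong \Z$ and which satisfies $\sigma(U_\rho) = -U_\rho$ for the antipodal action $\sigma$. Second, the statement that the inclusion $C(\S^{2n-1}_\omega)^{\Z_2} \hookrightarrow C(\S^{2n-1}_\omega)$ induces multiplication by $2$ on $K_1 \cong \Z$. Given both, the corollary drops out quickly: equivariance gives $\sigma(\Phi(U_\rho)) = \Phi(\sigma(U_\rho)) = -\Phi(U_\rho)$, hence
\[
    \sigma\bigl(\Phi(U_\rho) \cdot U_\omega^{-1}\bigr) \;=\; (-\Phi(U_\rho))(-U_\omega^{-1}) \;=\; \Phi(U_\rho) \cdot U_\omega^{-1},
\]
so $\Phi(U_\rho)U_\omega^{-1}$ has all matrix entries in the fixed-point subalgebra. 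Its $K_1$ class is then even by the second ingredient, and additivity $[\Phi(U_\rho)] = [\Phi(U_\rho)U_\omega^{-1}] + [U_\omega]$ forces $\Phi_*$ to act as multiplication by an odd integer.

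For the first ingredient, explicit representatives of the $K_1$ generator are available in the classical case and built entirely from monomials of total degree one in $z_j, z_j^*$: namely $z_1$ itself for $n = 1$, the quaternionic unitary $\bigl(\begin{smallmatrix} z_1 & z_2 \\ -z_2^* & z_1^* \end{smallmatrix}\bigr)$ for $n = 2$, and in general a matrix of size $N = 2^{n-1}$ built via Clifford multiplication. The unitarity check requires only the sphere relation $\sum z_j z_j^* = 1$ together with the Natsume--Olsen commutation rules $z_i z_j = \rho_{ij} z_j z_i$ and their adjoints, so the same formulas yield a unitary in $M_N(C(\S^{2n-1}_\rho))$; that it still represents a $K_1$-generator follows either from the explicit computation of \cite{na97} or from the deformation-invariance of $K$-theory coming from \cite{ri93}. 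The parity $\sigma(U_\rho) = -U_\rho$ is automatic since every entry is linear in the $z_j$ and $z_j^*$, all of which are negated by $\sigma$.

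The bulk of the technical work goes into the second ingredient. In the undeformed case $\rho = 1$, the fixed-point subalgebra is $C(\RP^{2n-1})$, and the pullback $p^* : K^1(\RP^{2n-1}) \to K^1(\S^{2n-1})$ along the double cover is multiplication by $2$ on the free generators; this is classical and can be verified by the Atiyah--Hirzebruch spectral sequence, or by the transfer identity $p^* t = \mathrm{id} + \sigma^*$ combined with the fact that the antipodal map has degree $+1$ on odd-dimensional spheres. To transfer this to the Natsume--Olsen setting, I plan to use that the antipodal $\Z_2$-action commutes with the torus action underlying the Rieffel deformation, so that $C(\S^{2n-1}_\rho)^{\Z_2}$ is itself the Rieffel deformation of $C(\RP^{2n-1})$ by the induced torus action and the inclusion into $C(\S^{2n-1}_\rho)$ is the corresponding deformation of $C(\RP^{2n-1}) \hookrightarrow C(\S^{2n-1})$; Rieffel's $K$-theory invariance then carries the doubling statement across. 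Arranging this deformation-compatibility precisely is where the main obstacle lies, and if it turns out to be awkward to set up cleanly, I would fall back on running a Pimsner--Voiculescu sequence for the circle subgroups of the deforming torus to compute both $K_1$ groups and the inclusion map directly.
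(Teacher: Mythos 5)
Your proposal is correct and follows essentially the same route as the paper: the explicit odd-entried $K_1$ generator is Natsume--Olsen's $Z_\rho(n)$, the doubling statement for the fixed-point inclusion is exactly Lemma~\ref{lem:evensallrho}, and the final parity argument via $Z_\omega(n)^*\cdot\Phi(Z_\rho(n))$ is the paper's proof of Corollary~\ref{cor:BU}. The one step you rightly flag as the main obstacle --- that the deformation isomorphisms on $K_1$ are \emph{natural} for the equivariant inclusion, not merely that $K$-theory is deformation-invariant --- is resolved in the paper by Sangha's $KK$-equivalence result (Corollary~\ref{cor:natdef}) rather than by a Pimsner--Voiculescu computation.
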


The Natsume-Olsen spheres are formed from the commutative sphere $C(S^{2n - 1})$ by $\theta$-deformation, so their $K$-groups have isomorphisms described in \cite{ri93b} to $K_j(C(\S^{2n - 1}))$. Natsume and Olsen chose to specify $K_1 \cong \Z$ more concretely in terms of a noncommutative Toeplitz algebra, and we adopt this identification: an invertible matrix $M$ over $C(\S^{2n - 1}_\rho)$ is indentified with the negative index of its Toeplitz operator. Next, the $\theta$-deformed even spheres $C(\S^{2m}_\rho)$ may be be found in \cite{co02}, and they are described via generators and relations in \cite{pe13} (among other places) with some results on projective modules. These spheres also admit a natural antipodal action, giving us a corollary (Corollary \ref{cor:dimensionNCBU}) of the above result.

\begin{cor}\label{cor:introdimBU}
There is no unital $*$-homomorphism $\Psi: C(\S^{k-1}_\rho) \to C(\S^{k}_\omega)$ which is equivariant for the antipodal action.
\end{cor}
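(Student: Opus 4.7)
The plan is to reduce Corollary \ref{cor:introdimBU} to the equal-dimensional case in Corollary \ref{thm:introZ2BU} by composing the hypothetical $\Psi$ with an antipodally equivariant ``equator'' quotient map between $\theta$-deformed spheres of adjacent dimensions. The essential $K$-theoretic input is that any even-dimensional $\theta$-deformed sphere $C(\S^{2m}_\rho)$ has $K_1 = 0$: classically $K^1(\S^{2m}) = 0$, and $K$-theory is preserved under $\theta$-deformation by Rieffel's results in \cite{ri93b}. Hence any unital $*$-homomorphism between two NO spheres of the same odd dimension that factors through an even-dimensional $\theta$-sphere must induce the zero map on $K_1$, in direct conflict with the odd-multiplication conclusion of Corollary \ref{thm:introZ2BU}.

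I would split by the parity of $k$. If $k = 2n$ is even, so $\Psi : C(\S^{2n-1}_\rho) \to C(\S^{2n}_\omega)$, I would build the quotient $q : C(\S^{2n}_\omega) \to C(\S^{2n-1}_\tau)$ that sends the central self-adjoint generator $x$ of the even $\theta$-sphere to $0$ and retains the remaining generators $z_1,\ldots,z_n$ with their commutation relations, where $\tau$ is the restriction of $\omega$ to these indices. This $q$ is antipodally equivariant, since the $\Z_2$-action negates every generator on both sides. The composition $q \circ \Psi$ is then a unital equivariant $*$-homomorphism between two NO spheres of the same odd dimension $2n - 1$, and its $K_1$-map factors through $K_1(C(\S^{2n}_\omega)) = 0$, contradicting Corollary \ref{thm:introZ2BU}. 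If instead $k = 2n + 1$ is odd, so $\Psi : C(\S^{2n}_\rho) \to C(\S^{2n+1}_\omega)$, I would go in the other direction: choose $\sigma$ with $\sigma_{ij} = \rho_{ij}$ for $i,j \le n$ and $\sigma_{i,n+1} = 0$ for $i \le n$, and build $p : C(\S^{2n+1}_\sigma) \to C(\S^{2n}_\rho)$ by $z_i \mapsto z_i$ for $i \le n$ and $z_{n+1} \mapsto x$. The condition $\sigma_{i,n+1} = 0$ forces $z_{n+1}$ to commute with the remaining generators, consistent with $x$ being central and self-adjoint in $C(\S^{2n}_\rho)$, and $p$ is antipodally equivariant. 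Now $\Psi \circ p$ is a unital equivariant $*$-homomorphism between two NO spheres of the same odd dimension $2n+1$, whose $K_1$-map factors through $K_1(C(\S^{2n}_\rho)) = 0$, again contradicting Corollary \ref{thm:introZ2BU}.

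The main obstacle I anticipate is the careful construction of the equator quotients $q$ and $p$: one must verify, using the presentation of the $\theta$-deformed even sphere $C(\S^{2m}_\rho)$ from \cite{co02, pe13}, that the universal $C^*$-algebra admits the required substitution of generators and that the deformation parameters can be aligned so that $q$ and $p$ exist as genuine unital $*$-homomorphisms rather than merely formal maps on generators. Once these are in hand, the argument is purely $K$-theoretic and follows immediately from Corollary \ref{thm:introZ2BU} together with Rieffel's invariance of $K$-theory under $\theta$-deformation.
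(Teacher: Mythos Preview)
Your approach is essentially identical to the paper's: both split by the parity of $k$, construct antipodally equivariant equator quotients between adjacent-dimensional $\theta$-spheres (sending $x \mapsto 0$ in one direction, $z_{n+1} \mapsto x$ in the other), and compose with $\Psi$ to obtain an equivariant map between equal-dimensional odd spheres whose $K_1$-map factors through the trivial $K_1$ of an even $\theta$-sphere, contradicting Corollary~\ref{thm:introZ2BU}. One small slip: in the paper's $\rho$-convention the parameter matrix has unimodular entries with $z_k z_j = \rho_{jk} z_j z_k$, so commutation of $z_{n+1}$ with the other generators requires $\sigma_{i,n+1} = 1$, not $0$ (you are tacitly using the antisymmetric $\theta$-coordinate).
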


\noindent More generally, there is no equivariant map from $C(\S^{n}_\rho)$ to $C(\S^{m}_\omega)$ when $n < m$. This is analagous to a result of M. Yamashita in $\cite{ya13}$ on $q$-deformed spheres; in the following theorem, the $\Z_2$ action is a generalized antipodal map.

\begin{thm}[Yamashita] For any $0 < q \leq 1$ and any integers $n < m$, there is no $\Z_2$-equivariant unital $*$-homomorphism from $C(\S^n_q)$ to $C(\S^m_q)$.
\end{thm}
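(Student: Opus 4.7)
The plan is to parallel the strategy this paper develops for Natsume-Olsen spheres: first establish a K-theoretic degree invariant for equivariant unital $*$-homomorphisms between q-deformed spheres of the same dimension, and then deduce the mixed-dimensional case via a restriction-to-equator argument that produces a contradiction between ``nontrivial'' and ``trivial'' on K-theory.

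As preliminaries, I would record that q-deformed spheres have the K-theory of their commutative counterparts: for odd $k = 2\ell - 1$, $K_1(C(\S^k_q)) \cong \Z$; for even $k = 2\ell$, $K_0(C(\S^k_q)) \cong \Z^2$. A distinguished ``degree-carrying'' generator must be identified in each case (for odd $k$, a natural class in $K_1$; for even $k$, the reduced Bott-type class in $K_0$), and I would show that $\Phi_*$ on this generator behaves like a classical topological degree. The core technical step is the equidimensional case: every $\Z_2$-equivariant unital $*$-homomorphism $\Phi \colon C(\S^k_q) \to C(\S^k_q)$ induces multiplication by an odd integer on the degree-carrying K-class. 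My approach would be to construct an equivariant continuous field of C*-algebras over $(0,1]$ connecting $C(\S^k_q)$ to $C(\S^k)$, use K-theoretic continuity to transport the degree across $q \in (0,1]$, and then invoke the classical statement that an odd continuous self-map of a sphere has odd degree at $q = 1$.

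With this equidimensional theorem in hand, I would handle $n < m$ as follows. Given equivariant unital $\Phi \colon C(\S^n_q) \to C(\S^m_q)$, compose with the equivariant surjection $r \colon C(\S^m_q) \twoheadrightarrow C(\S^n_q)$ restricting to an equatorial $\S^n \subset \S^m$. Although the upper hemisphere is not $\Z_2$-invariant, the non-equivariant factorization $r \colon C(\S^m_q) \twoheadrightarrow C(D^{n+1}_{q,+}) \twoheadrightarrow C(\S^n_q)$ through the noncommutative hemisphere still forces $r_*$ to annihilate the degree-carrying K-class (because $C(D^{n+1}_{q,+})$ is contractible, so its relevant K-group is either trivial or of rank one and carries only the unit class). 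Hence $(r \circ \Phi)_*$ is zero on that class. On the other hand, $r \circ \Phi$ is equivariant, and by the equidimensional theorem it must act as multiplication by an odd integer on the degree class, a contradiction.

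The hard part is the equidimensional step. Transporting the Borsuk-Ulam degree across the q-deformation while respecting the $\Z_2$-action demands a careful continuous-field argument to ensure that the degree is locally constant in $q \in (0,1]$, and that the antipodal action extends compatibly across the whole field. A secondary but nontrivial technicality is constructing the equatorial restriction $r$ and the hemisphere algebra $C(D^{n+1}_{q,+})$ in the q-deformed category so that both are genuinely well-defined and yield the claimed K-theoretic factorization; once these ingredients are in place, the argument above closes quickly.
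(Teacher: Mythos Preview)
This theorem is not proved in the paper at all: it is quoted as a result of Yamashita from \cite{ya13} and serves only as a point of comparison. The paper explicitly remarks that ``the processes of $q$-deformation and $\theta$-deformation produce distinct families of spheres, and the techniques of proof for our Borsuk-Ulam theorems are different.'' So there is no in-paper proof to compare your proposal against.

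That said, your proposal has a substantive gap precisely at the point you flag as ``the hard part.'' You plan to transport the odd-degree statement from $q=1$ to general $q$ via an equivariant continuous field over $(0,1]$ with locally constant $K$-theory. The paper's method works for Natsume--Olsen spheres because they are Rieffel deformations $C(\S^{2n-1})_J$: the continuous field, the $KK$-equivalences of the evaluation maps (Sangha's theorem), and the naturality diagram of Corollary~\ref{cor:natdef} all come for free from that structure. The $q$-spheres are \emph{not} Rieffel deformations of $C(\S^k)$; they arise from compact quantum group constructions, and there is no off-the-shelf strict deformation quantization producing a continuous field $(C(\S^k_q))_{q \in (0,1]}$ with the evaluation-$KK$-equivalence property you need. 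Building such a field, proving the evaluations are $KK$-equivalences, and checking that the $\Z_2$ action is compatible fiberwise would each be a separate (and nontrivial) project, not a routine verification. Your second step---the equatorial restriction and hemisphere factorization in the $q$-deformed category---is plausible and is indeed part of the standard toolkit for $q$-spheres, but it cannot rescue the argument without the equidimensional step.

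Yamashita's actual argument in \cite{ya13} proceeds instead through equivariant $KK$-theory for the $\Z_2$ action (and more generally the quantum-group-equivariant machinery), computing $KK^{\Z_2}$ groups of the $q$-spheres directly rather than deforming to $q=1$. That route avoids the continuous-field problem entirely.
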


The processes of $q$-deformation and $\theta$-deformation produce distinct families of spheres, and the techniques of proof for our Borsuk-Ulam theorems are different, relying on results about fixed point subalgebras at the end of this section. Now, any even $\theta$-deformed sphere $C(\S^{2n}_\rho)$ may also be realized as the unreduced suspension $\Sigma C(\S^{2n - 1}_\rho) = \{f: [-1, 1] \to C(\S^{2n - 1}_\rho): f \textrm{ is continuous and } f(-1), f(1) \in \C\}$, which places Corollary \ref{cor:introdimBU} in the context of a conjecture from \cite{da15}. 

\begin{conj}[Dabrowski]
If $A$ is a unital $C^*$-algebra with a free $\mathbb{Z}_2$ action, then there is no equivariant [unital] $*$-homomorphism from $A$ to $\Sigma A$. [$\Sigma A$ admits a $\Z_2$ action from composing the pointwise action of $\Z_2$ on $A$ with $t \mapsto -t$ on the domain $[-1, 1]$.]
\end{conj}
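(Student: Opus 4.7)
My plan is to imitate, in full generality, the $K$-theoretic argument the paper uses for Natsume-Olsen spheres. Assume for contradiction that $\Phi: A \to \Sigma A$ is an equivariant unital $*$-homomorphism. The natural short exact sequence
\[ 0 \to S A \to \Sigma A \to \C \oplus \C \to 0, \]
in which $SA = C_0((-1,1)) \otimes A$ is the ordinary suspension and the quotient is evaluation at $\pm 1$, combines with the Bott isomorphism $K_i(SA) \cong K_{i-1}(A)$ to give a six-term exact sequence expressing $K_*(\Sigma A)$ in terms of $K_*(A)$ with a degree shift of one. The first task is to follow $\Phi_*$ through this sequence and extract an invariant that witnesses nontriviality.

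Equivariance should enter at the level of the induced $\Z_2$-action on $K$-theory. The action $\tilde\alpha(f)(t) = \alpha(f(-t))$ on $\Sigma A$ combines the original $\alpha_*$ on the $K_{*-1}(A)$ summand coming from $SA$ with an extra sign $-1$ supplied by the orientation-reversing flip $t \mapsto -t$. Consequently the $(-1)$-eigenspace of $\alpha_*$ on $K_*(A)$ maps into the $(+1)$-eigenspace on $K_*(\Sigma A)$ under the connecting map, and conversely; since $\Phi$ is equivariant, $\Phi_*$ must preserve the eigenspace decomposition. Freeness of the $\Z_2$-action on $A$, via the standard Morita equivalence between $A \rtimes \Z_2$ and $A^{\Z_2}$, should then guarantee that the $(-1)$-eigenspace of $\alpha_*$ is nontrivial, while the matching piece in $K_*(\Sigma A)$ is forced into a shifted slot. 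Chasing these incompatibilities is the crux of the argument, and in the concrete case $A = C(\S^{2n-1}_\rho)$ it should reduce to the parity statement proved earlier in the paper.

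The principal obstacle is the absence of explicit $K$-theoretic invariants in this general setting. For Natsume-Olsen spheres, Toeplitz index theory identifies $K_1 \cong \Z$ and translates equivariance into the parity condition ``odd degree''; but for an arbitrary $A$ the groups $K_*(A)$ can be arbitrarily large or torsion-filled, and the induced $\Z_2$-action on them can fail to produce any single-integer obstruction. I expect that any proof of the full conjecture will need to replace the ``odd integer'' condition with an equivariant secondary class, most naturally living in $KK^{\Z_2}(A, \Sigma A)$ or in equivariant cohomology with $K$-theory coefficients, and will need to exploit freeness through a Phillips-style universal proper action. Constructing such a uniformly functorial obstruction is, I suspect, exactly where the conjecture will stand or fall.
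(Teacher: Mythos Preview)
The statement you are attempting to prove is not proved in the paper at all: it is explicitly labeled a \emph{Conjecture} (due to Dabrowski) and is presented only as context for the special case of $\theta$-deformed spheres, which the paper establishes separately in Corollary~\ref{cor:dimensionNCBU}. There is therefore no ``paper's own proof'' to compare against, and your proposal should be read as an attempt at an open problem rather than as a reconstruction.

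As an attempt, your sketch contains a concrete gap beyond the ones you already flag. You assert that freeness of the $\Z_2$-action on $A$ ``should then guarantee that the $(-1)$-eigenspace of $\alpha_*$ is nontrivial.'' This is false in general. Take $A = C(\S^1)$ with the antipodal action: the action is free, yet the induced maps on $K_0 \cong \Z$ and $K_1 \cong \Z$ are both the identity (the antipodal map on $\S^1$ is a rotation, hence homotopic to the identity), so the $(-1)$-eigenspace of $\alpha_*$ is zero in both degrees. Your eigenspace-chasing argument therefore has no traction even in this simplest nontrivial case, and indeed the paper's own proof for spheres does \emph{not} proceed by looking at eigenspaces of $\alpha_*$ on $K_*(A)$; it instead compares the image of $K_1$ of the fixed-point subalgebra inside $K_1$ of the whole algebra (Lemma~\ref{lem:evensallrho}), which is a finer invariant than the action of $\alpha_*$ alone.

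Your final paragraph correctly diagnoses that ordinary $K$-theory is too coarse and that some equivariant refinement is needed; this is precisely why the statement remains a conjecture. But the middle portion of your write-up reads as though the eigenspace argument would succeed modulo bookkeeping, and that is not the case.
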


This conjecture is tangential to other work on generalizing sphere theorems; see \cite{ba15} for conjectures and examples on noncommutative joins by P. Baum, L. Dabrowski, and P. Hajac. Now, a different extension of the previous corollaries can be reached within $C(\S^{2n - 1}_\rho)$; these odd spheres admit rotation maps which generalize $(z_1, \dots z_n) \mapsto (\alpha_1z_1, \ldots, \alpha_n z_n)$ on $S^{2n - 1} \subset \mathbb{C}^n$ for any $\alpha_i \in \S^1$, not just for $\alpha_i = -1$. If $R$ denotes the generalization of this rotation to the Natsume-Olsen spheres when each $\alpha_i$ is a primitive root of unity of the same order $k \geq 2$, then we have the following result from Corollary \ref{cor:ZnBU}.

\begin{cor}
Suppose $\Phi: C(\S^{2n - 1}_\rho) \to C(\S^{2n - 1}_\omega)$ is a unital $*$-homomorphism which is equivariant for $R$ (of order $k \geq 2$). Then $\Phi_*$ is nontrivial on $K_1 \cong \Z$, given by multiplication by an integer in $k \mathbb{Z} + 1$.
\end{cor}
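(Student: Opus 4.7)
Proof plan: The approach mirrors the $\Z_2$-equivariant proof but makes essential use of the fact that, for the $\Z_k$-rotation $R$, the generator of $K_1(C(\S^{2n-1}_\rho))\cong\Z$ admits an explicit unitary representative which is homogeneous under $R$ in a transparent way. The entire argument then hinges on a difference construction that lands inside the fixed-point subalgebra, at which point the Borsuk-Ulam obstruction appears as a divisibility statement.

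First, I would exhibit a unitary $u \in M_N(C(\S^{2n-1}_\rho))$ representing a generator of $K_1$, together with a scalar diagonal matrix $D \in M_N(\C)$ such that $R(u) = u D$. Such a $u$ is obtained by adapting the classical Bott matrix built from $z_1, \dots, z_n$ and $z_1^*, \dots, z_n^*$; in the illustrative case $n = 2$ one takes
\beu
u = \begin{pmatrix} z_1 & -z_2^* \\ z_2 & z_1^* \end{pmatrix}, \qquad D = \mathrm{diag}(\zeta, \zeta^{-1}),
\eeu
with $\zeta$ the primitive $k$th root of unity implementing $R$, and the higher-dimensional Bott element admits an analogous column-homogeneous factorization. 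Using the same formal expression in the target algebra produces $u' \in M_N(C(\S^{2n-1}_\omega))$ with $R(u') = u' D$ for the same scalar $D$.

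Next, set $v = \Phi(u)$ and $w = v(u')^{-1}$. Since $\Phi$ is unital, it acts as the identity on the scalar matrix $D$, and equivariance gives $R(v) = \Phi(R(u)) = \Phi(uD) = v D$. Then
\beu
R(w) = R(v)\,R((u')^{-1}) = (vD)(D^{-1}(u')^{-1}) = w,
\eeu
so $w$ is a unitary in $M_N(B')$, where $B' = C(\S^{2n-1}_\omega)^R$ is the fixed-point subalgebra. Writing $\Phi_*[u] = m[u']$ in $K_1 \cong \Z$, the class $[w] \in K_1(B')$ is carried under the inclusion $\iota\colon B' \hookrightarrow C(\S^{2n-1}_\omega)$ to $[v] - [u'] = (m-1)[u']$.

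Finally, I would invoke the fixed-point subalgebra results at the end of Section \ref{sec:BUR}: for the free $\Z_k$ rotation on a Natsume-Olsen odd sphere, the image of $\iota_*\colon K_1(B') \to K_1(C(\S^{2n-1}_\omega))\cong\Z$ is contained in $k\Z$. In the commutative limit this is the fact that the $k$-sheeted covering $\S^{2n-1} \to L^{2n-1}_k$ induces multiplication by $k$ on $K^1$, and Rieffel's $K$-theoretic invariance of $\theta$-deformation transfers the statement to the deformed sphere. Combining this with $\iota_*[w] = (m-1)[u']$ yields $k \mid (m-1)$, i.e., $m \in k\Z + 1$, which in particular forces $m \neq 0$ and completes the proof. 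The main obstacle is the first step: producing a $K_1$-generator $u$ for which $R$ acts by right multiplication by a \emph{scalar} matrix $D$ common to both source and target, so that the difference $v(u')^{-1}$ genuinely lands in the fixed-point subalgebra.
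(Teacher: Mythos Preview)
Your overall strategy is the paper's: form a difference between $\Phi$ applied to a $K_1$ generator and the target's generator, land in a fixed-point subalgebra, and invoke divisibility. The gap is exactly the step you flag as the main obstacle, and it does not close as you state it. The rotation $R$ in the corollary is $z_i \mapsto \alpha_i z_i$ for arbitrary primitive $k$th roots $\alpha_1,\ldots,\alpha_n$, not a single $\zeta$; already for $n=2$ with $\alpha_1 \neq \alpha_2$ your first column $(z_1,z_2)^T$ scales by $(\alpha_1,\alpha_2)^T$, which is not a scalar, so $R(u) = uD$ fails. For $n \geq 3$, even when all $\alpha_i$ coincide, the recursive generator $Z_\rho(n)$ mixes $z_i$- and $z_j^*$-type entries within single rows and columns (see the displayed $Z_\rho(3)$), and you have not produced an alternative column-homogeneous generator. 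What holds in general is the \emph{two-sided} relation $R(Z_\rho(n)) = A\,Z_\rho(n)\,B$ of (\ref{inconhom}), with diagonal scalar unitaries $A,B$ of order dividing $k$, independent of the sphere parameter.

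With that correction the difference $W = Z_\omega(n)^*\,\Phi(Z_\rho(n))$ satisfies $R(W) = B^* W B$, so it is fixed not by $R$ but by the twisted action $R_U\colon M \mapsto U^* R(M) U$ with $U = B^*$. Your lens-space argument is the special case $U = I$ and does not suffice; this is precisely why the paper establishes Theorem \ref{thm:ZnNCcase} for arbitrary $U$ of order dividing $k$ (Lemma \ref{lem:commZnfixed} handles the commutative case for $R_U$, then one deforms as in Lemma \ref{lem:evensallrho}). Once that theorem is in hand, your last paragraph goes through unchanged: $[W] \in k\Z$ forces $[\Phi(Z_\rho(n))] \in k\Z + 1$.
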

While the above results all concern homomorphisms on spheres, they are proved using a theorem on fixed point subalgebras. The most general form used is as follows, from Theorem \ref{thm:ZnNCcase}.
\begin{thm}
Let $R$ (as above) have order $k \geq 2$, and suppose $U \in \mathcal{U}_d(\mathbb{C})$ is a unitary matrix with order dividing $k$. If $M$ is an invertible matrix over $C(\mathbb{S}^{2n - 1}_\rho)$ with $U R(M) U^* = M$, then the equivalence class of $M$ in $K_1(C(\S^{2n - 1}_\rho)) \cong \Z$ is an element of $k \Z$.
\end{thm}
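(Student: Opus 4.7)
The plan is to reformulate the hypothesis as saying that $M$ lies in the fixed-point subalgebra of a $\Z_k$-action on $M_d(C(\S^{2n-1}_\rho))$, and then to analyze how the induced $K_1$-map behaves under the inclusion of this subalgebra. Set $A = C(\S^{2n - 1}_\rho)$ and define an automorphism $\beta$ of $M_d(A)$ by $\beta(X) = U R(X) U^*$. Since $R$ restricts to the identity on constant matrices, one computes inductively that $\beta^j(X) = U^j R^j(X) U^{*j}$, and both $R^k = \mathrm{id}$ and $U^k = I$ give $\beta^k = \mathrm{id}$, so $\beta$ generates a $\Z_k$-action on $M_d(A)$. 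The hypothesis $UR(M)U^* = M$ then says exactly that $M$ lies in the fixed-point subalgebra $B := M_d(A)^\beta$, and $[M] \in K_1(A)$ is the image of a class in $K_1(B)$ under the inclusion $\iota: B \hookrightarrow M_d(A)$.

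To unpack the twist, I would diagonalize $U$: after a unitary change of basis in $\C^d$, we may assume $U = \mathrm{diag}(\zeta^{j_1}, \ldots, \zeta^{j_d})$ with $\zeta$ a primitive $k$-th root of unity. Entry-by-entry, $\beta(M) = M$ becomes $R(M_{ab}) = \zeta^{j_b - j_a} M_{ab}$, so each $M_{ab}$ is homogeneous of weight $j_b - j_a$ with respect to the $\Z_k$-grading of $A$ induced by $R$. In particular, the diagonal entries of $M$ lie in $A^R$ while off-diagonal entries lie in prescribed $R$-eigenspaces, and $B$ is a twisted matrix algebra built from these homogeneous components.

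The central step is to prove $\iota_*(K_1(B)) \subseteq k\Z$. My plan is a deformation to the commutative case via Rieffel's continuous field of $C^*$-algebras interpolating between $C(\S^{2n-1}_\rho)$ and $C(\S^{2n-1})$. The rotation $R$ is a cyclic subgroup of the torus action used to produce the $\theta$-deformation, so $R$ and the constant unitary $U$ extend fiberwise across the field, and the fixed-point condition persists along the deformation. By homotopy invariance of $K_1$ across the field, $[M]$ is identified with the class of an equivariant invertible $M_0 : \S^{2n-1} \to \mathrm{GL}_d(\C)$ satisfying $M_0 \circ \sigma = U M_0 U^*$. Equivariance makes $M_0$ descend to a section of an associated $\mathrm{GL}_d(\C)$-bundle over the lens space $L = \S^{2n-1}/\Z_k$, and one identifies $[M_0]$ with $\pi^*[\overline{M}_0]$ for the $k$-fold covering $\pi: \S^{2n-1} \to L$. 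Since $\pi^*$ multiplies the top $K^1$-class by the covering degree $k$, we conclude $[M_0] \in k\Z$ and hence $[M] \in k\Z$.

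The main obstacle is the Rieffel deformation step: while the continuous field and its compatible $\Z_k$-actions are standard, producing a continuous path of equivariant \emph{invertible} matrices along the field whose $K_1$-class is tracked correctly demands care, especially because the fixed-point subalgebras in intermediate fibers are nontrivial deformations. A cleaner, purely noncommutative alternative is a direct Toeplitz computation inside Natsume and Olsen's framework: $R$ lifts to an automorphism of $\tau^{2n-1}_\rho$ implemented on Fock space by a unitary $V$ of order $k$ with $k$ distinct eigenspaces. Since $M \in B$, the Toeplitz operator $T_M$ commutes with $V \otimes U^*$ and preserves the corresponding eigenspace decomposition, and the generators $z_i$ act as shift-like intertwiners between adjacent eigenspaces, commuting with $T_M$ modulo compacts. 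This forces the Fredholm indices on the $k$ pieces to agree, making the total index (which equals $-[M]$) a multiple of $k$.
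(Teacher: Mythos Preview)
Your overall strategy---reduce to the commutative sphere via Rieffel deformation, then analyze the image of $K_1$ of the fixed-point subalgebra there---is exactly the paper's approach. The paper packages the deformation step more cleanly than you do: rather than tracking a path of equivariant invertibles across the continuous field, it applies the naturality of the $K$-theory isomorphisms $(\pi_1)_* \circ (\pi_0)_*^{-1}$ with respect to $\R^n$-equivariant homomorphisms (the corollary of Sangha's $KK$-equivalence) directly to the inclusion $M_d(C(\S^{2n-1}))^{R_U} \hookrightarrow M_d(C(\S^{2n-1}))$. Since $R_U$ commutes with the torus action, the fixed-point subalgebra is itself $\R^n$-invariant and deforms alongside the sphere, so a single commutative square transfers the range of $\iota_*$ from the case $\rho = 1$. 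Your worry about ``producing a continuous path of equivariant invertible matrices'' dissolves once you work at the level of the subalgebra inclusion rather than with individual elements.

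Where your argument genuinely diverges from the paper is the commutative case, and there it is incomplete. For $U = I$ your lens-space covering argument is fine, but for general $U$ the equivariant $M_0$ descends only to a section of an endomorphism bundle over $L$, and your claim that its class is $\pi^*[\overline{M}_0]$---hence divisible by $k$---requires identifying $K_1$ of the twisted fixed-point algebra with $K_1(C(L))$ and checking that the inclusion corresponds to $\pi^*$ under that identification; you have not done this. The paper sidesteps the issue with a direct inductive six-term-sequence computation: write $\S^{2n-1}$ as $k$ closed balls glued along an $R$-invariant set $X_n$, and show from the exact sequence that $K_1$ of the fixed-point subalgebra is cyclic, generated by a commuting product $G \cdot R_U(G) \cdots R_U^{k-1}(G)$ with $G$ supported on a single ball. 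Such a product manifestly lands in $k\Z$ inside $K_1(C(\S^{2n-1}))$, and the argument handles all $U$ uniformly.

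Your Toeplitz alternative is a genuinely different route not taken in the paper. As sketched it is plausible but not yet a proof: you must verify that the Fock-space unitary $V$ implementing $R$ has exactly $k$ eigenspaces of the right form, that $T_M$ commutes with $V \otimes U^*$ exactly rather than modulo compacts, and that the shift intertwiners really force equal Fredholm indices on the pieces.
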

The relevance of this theorem to Borsuk-Ulam type results comes from the fact that $K_1(C(\S^{2n - 1}_\rho)) \cong \Z$ may be written with a generator that is nontrivial homogeneous for numerous rotation-and-conjugation actions.

\section{Graded Banach and $C^*$-algebras}\label{sec:graded}

Below is Main Theorem 1 of Taghavi in \cite{ta12}, in which $k$ is a positive integer and $G$ is a finite abelian group. It is proved by reducing to the $\Z_n$ case by quotient groups.

\begin{thm}\label{thm:tag}(\cite{ta12}, Main Theorem 1)
Let $A$ be a $G$-graded Banach algebra [$G$ is finite and abelian] with no nontrivial idempotents. Let $a \in A$ be a nontrivial homogeneous element. Then $0$ belongs to the convex hull of the spectrum $\sigma(a^k)$. Further, if $A$ is commutative and $a$ is invertible, then $a^k$ and $1$ do not lie in the same connected component of the space of invertible elements $G(A)$. 
\end{thm}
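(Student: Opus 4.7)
The plan follows Taghavi's reduction to the cyclic case. Writing $a \in A_g$ with $n := \mathrm{ord}(g) \geq 2$, Pontryagin duality for finite abelian groups supplies a character $\chi \in \widehat{G}$ with $\chi(g) = \omega$, a primitive $n$-th root of unity. The coarsened decomposition $\tilde{A}_j := \bigoplus_{h : \chi(h) = \omega^j} A_h$ is a $\Z_n$-grading of $A$ with $a \in \tilde{A}_1$, so it suffices to prove the theorem in the $\Z_n$-graded setting; relative to this grading, the automorphism $T$ from (\ref{grading map}) satisfies $Ta = \omega a$, and $a^k \in A_0$ for $k = n$, the interesting range of powers.

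For the first assertion, proceed by contradiction: if $0 \notin \mathrm{conv}(\sigma(a^k))$, then $\sigma(a^k)$ sits in an open half-plane missing $0$, so holomorphic functional calculus produces $b$ with $b^k = a^k$. Since $b$ lies in the closed Banach subalgebra generated by $a^k$ and $1$ --- contained in the closed subspace $A_0$ --- we have $b \in A_0$, and $b$ commutes with $a$. Then $u := ab^{-1} \in A_1$ satisfies $u^k = 1$, so the spectral projections $e_j := \tfrac{1}{k}\sum_{m=0}^{k-1}\omega^{-jm} u^m$ are orthogonal idempotents summing to $1$. By the no-idempotent hypothesis exactly one $e_{j_0}$ equals $1$, forcing $u = \omega^{j_0}$; hence $a = \omega^{j_0} b \in A_0 \cap A_1 = \{0\}$, whence $\sigma(a^k) = \{0\}$ contradicts $0 \notin \mathrm{conv}(\sigma(a^k))$.

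For the second assertion, assume $A$ is commutative, $a$ is invertible, and $a^k$ is connected to $1$ in $G(A)$; in the commutative case this means $a^k = e^c$ for some $c \in A$. Setting $b := e^{c/k}$, the element $u := ab^{-1}$ again satisfies $u^k = 1$, and the spectral-idempotent argument above yields $u = \omega^{j_0}$, so $a = \omega^{j_0} e^{c/k}$. Applying $T$ with $Ta = \omega a$ produces $e^{(Tc - c)/k} = \omega$. Write $d := (Tc - c)/k$; by no-idempotents, $\sigma(d)$ is connected and contained in $\log\omega + 2\pi i \Z$, so $\sigma(d) = \{c_0\}$ with $c_0 \neq 0$, and $d - c_0$ is quasinilpotent, hence in $\mathrm{rad}(A)$ since $A$ is commutative. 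Finally, the telescoping identity $\sum_{j=0}^{n-1} T^j(Tc - c) = T^n c - c = 0$ forces $\pi_0(d) = 0$ and therefore $\pi_0(d - c_0) = -c_0$; but $\pi_0$ preserves the $T$-invariant radical, so $-c_0 \cdot 1$ would be a unit lying in $\mathrm{rad}(A)$, a contradiction.

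The main obstacle lies in the second assertion: since $0 \in \mathrm{conv}(\sigma(a^k))$, holomorphic functional calculus no longer yields a $k$-th root of $a^k$, and we must resort to $b = e^{c/k}$ with $c$ not controlled by the grading. Passing from the resulting non-homogeneous identity $a = \omega^{j_0} e^{c/k}$ to a contradiction requires using $T$ to extract the scalar equation $e^d = \omega$, and then combining the averaging projection $\pi_0 = \tfrac{1}{n}\sum T^j$ with the commutative-algebra fact that quasinilpotents lie in the Jacobson radical.
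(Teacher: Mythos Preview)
Your first-assertion argument tacitly assumes $n \mid k$: the claim that the closed unital subalgebra generated by $a^k$ lies in $A_0$ needs $a^k \in A_0$, which fails when $n \nmid k$. The theorem is stated for every $k \in \Z^+$, a point the paper stresses explicitly. The omitted case is easy --- when $n \nmid k$ the element $a^k$ is itself nontrivial homogeneous, so $\sigma(a^k) = \omega^k \sigma(a^k)$ with $\omega^k \neq 1$, and averaging the orbit of any spectral point already gives $0 \in \mathrm{conv}(\sigma(a^k))$ without invoking the idempotent hypothesis --- but it should be stated, not buried in the phrase ``the interesting range of powers.'' There is also a small slip: you write $u = \omega^{j_0}$ with $\omega$ a primitive $n$-th root, but $u$ is only known to be a $k$-th root of unity. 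This is harmless in the second assertion (the scalar cancels after applying $T$) but muddies the first.

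With that patched, your route to the spectral assertion is genuinely different from the paper's. The paper works directly with $\sigma(a)$: rotational invariance $\sigma(a) = \omega \sigma(a)$ plus connectedness (forced by the absence of idempotents, via holomorphic functional calculus) shows that $\sigma(a)$ meets every ray through the origin, and spectral mapping then handles all $k$ at once. This is shorter and yields the sharper conclusion that $\sigma(a)$ either contains $0$ or completely surrounds it. Your argument instead manufactures a $k$-th root $b \in A_0$ and uses the grading to force $ab^{-1}$ to be scalar; this is closer in spirit to the paper's later Proposition~\ref{prop:tagish} on roots than to its discussion of Theorem~\ref{thm:tag}. Your treatment of the second assertion --- passing to $e^{(Tc-c)/k} = \omega$ and deriving a contradiction from the telescoping identity and the fact that quasinilpotents lie in the radical of a commutative Banach algebra --- is correct and matches the logarithm-based reasoning the paper distills from Taghavi's original proof into Theorem~\ref{restatement}.
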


Note in particular that there are no restrictions on $k \in \Z^+$; for example, $a^k$ might be a trivial homogeneous element. If $A$ is equal to $C(X)$ for a compact Hausdorff space $X$, then $X$ is connected if and only if $A$ has no nontrivial idempotents. The spectrum result in Taghavi's theorem illustrates the following problem: if $a$ is an invertible element that is nontrivial homogeneous, then in some $\Z_n = G/N$ grading with associated isomorphism $T$ and primitive $n$th root of unity $\omega$, $T(a) = \omega a$. Since $\sigma(a) = \sigma(Ta) = \sigma(\omega a) = \omega \sigma(a)$, if $\sigma(a)$ is missing values in any particular ray $e^{i\theta}[0, \infty)$, rotational symmetry will disconnect $\sigma(a)$ into $n$ pieces. The holomorphic functional calculus then provides a nontrivial idempotent in $A$, which contradicts the assumptions. This is a proof of a more general spectral condition than Taghavi claims: the connected set $\sigma(a)$ will either include $0$ or completely surround $0$ in $\C$, so we should not expect a logarithm of $a$ (or of $a^k$) using functional calculus. Taghavi's full result is a statement about (nonexistence of) logarithms that is not limited to functional calculus, and we have listed below the most general result that may be clearly distilled from the original proof; this also resolves our petty quibbles about the spectrum.

\begin{thm}\label{restatement}(Restatement of \cite{ta12}, Main Theorem 1)
Let $A$ be a $G$-graded Banach algebra with no nontrivial idempotents, where $G$ is a finite abelian group, and suppose $a \in A$ is a nontrivial homogeneous element. If $k \in \Z^+$, then there is no $b \in A$ with the following properties. 
\begin{enumerate}
  \item $g, h \in G \implies b_g b_h = b_h b_g$ 
  \item $ab = ba$
  \item $\exp(b) = a^k$
\end{enumerate}
\end{thm}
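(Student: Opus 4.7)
The plan is to first use the grading automorphisms to force $b$ into the identity component $A_0$, and then to repackage $\exp(b) = a^k$ into a nontrivial homogeneous $k$-th root of unity in $A$, which cannot exist.

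Let $a \in A_{g_a}$ with $g_a \neq e$, and decompose $b = \sum_g b_g$. For each character $\chi \in \widehat{G}$, the grading yields an automorphism $T_\chi$ acting as $\chi(g)$ on $A_g$. Applying $T_\chi$ to $\exp(b) = a^k$ gives $\exp(T_\chi(b)) = \chi(g_a)^k \exp(b)$, and because all $b_g$ commute with each other, $T_\chi(b)$ and $b$ commute, so $\exp(T_\chi(b) - b) = \chi(g_a)^k \cdot 1$. The analytic heart of the argument is the following lemma: in a Banach algebra with no nontrivial idempotents, every element has connected spectrum, since otherwise holomorphic functional calculus on a disconnecting step function produces a nontrivial idempotent. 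Hence if $\exp(x) = \lambda \cdot 1$ with $\lambda \neq 0$, then $\sigma(x)$ is a connected subset of the discrete set $\exp^{-1}(\lambda)$, so $\sigma(x) = \{\alpha\}$; the difference $q = x - \alpha$ is quasinilpotent with $\exp(q) = 1$, and the factorization $\exp(q) - 1 = q \cdot (1 + q/2! + q^2/3! + \cdots)$ exhibits the second factor as invertible (scalar $1$ plus a quasinilpotent), forcing $q = 0$ and $x = \alpha \cdot 1$.

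Applying this to $x = T_\chi(b) - b = \sum_{g \neq e} (\chi(g) - 1) b_g$ gives $T_\chi(b) - b = \alpha_\chi \cdot 1$; but by construction the $A_0$-component of $T_\chi(b) - b$ vanishes, forcing $\alpha_\chi = 0$, so $T_\chi(b) = b$ and $\chi(g_a)^k = 1$ for every character $\chi$. Since $\widehat{G}$ separates points of $G$, this means $b \in A_0$ and $g_a^k = e$. The condition $ab = ba$ combined with the direct-sum decomposition of both sides yields $ab_g = b_g a$ for every $g$, so $a$ commutes with $\exp(-b/k)$. Setting $u := a \exp(-b/k)$, one computes $u^k = a^k \exp(-b) = \exp(b)\exp(-b) = 1$, and since $\exp(-b/k) \in A_0$, $u$ lies in $A_{g_a}$ and is nontrivial homogeneous. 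Applying the spectrum lemma to $u$ with $u^k = 1$ shows $\sigma(u) = \{\zeta\}$ for a single $k$-th root of unity $\zeta$; writing $u = \zeta + q'$ with $q'$ quasinilpotent and expanding $(\zeta + q')^k = 1$ gives $q' \cdot (k \zeta^{k-1} + q' \cdot p(q')) = 0$, where the right factor is invertible (nonzero scalar plus quasinilpotent), so $q' = 0$. Then $u = \zeta \cdot 1$ lies in both $A_0$ and $A_{g_a}$, forcing $u = 0$ and contradicting $\zeta \neq 0$.

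The main obstacle is establishing the key spectral lemma in its noncommutative form: passing from $\exp(x) \in \C \cdot 1$ to $x \in \C \cdot 1$ relies on connectedness of $\sigma(x)$ (from the absence of nontrivial idempotents) together with the quasinilpotent-cancellation trick. Once this lemma is in hand, everything else is grading bookkeeping and a binomial expansion.
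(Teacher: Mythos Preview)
The paper does not supply its own proof of this statement; it presents Theorem~\ref{restatement} as a restatement distilled from Taghavi's original argument, and the only detailed proof in that vicinity is for the $n$th-root analogue, Proposition~\ref{prop:tagish}, whose technique the author describes as ``directly inspired by Taghavi's methods.'' Your argument is correct and rests on the same analytic core: connected spectrum (from the absence of nontrivial idempotents) forces any solution of $\exp(x) \in \C\cdot 1$ or $x^k \in \C\cdot 1$ to be scalar-plus-quasinilpotent, and a factorization then kills the quasinilpotent part.

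Where you diverge is in the endgame. The paper's Proposition~\ref{prop:tagish} works with the twisted ratio $b^{-1}Tb$, iterates the grading automorphism $T$ around the cycle to return to $b$, and tracks the accumulated quasinilpotent error to a contradiction. You instead run the spectral lemma once on $T_\chi(b) - b$ for every character $\chi$, use the vanishing $A_e$-component to force $T_\chi(b)=b$ and hence $b \in A_e$, and then pivot: the element $u = a\exp(-b/k)$ is a nontrivial homogeneous $k$th root of $1$, which a second application of the spectral lemma forces to be a nonzero scalar, contradicting $A_{g_a} \cap A_e = \{0\}$. This repackaging is clean and avoids the iterative bookkeeping; it also makes transparent why hypothesis~2 ($ab=ba$) is needed, namely to ensure $u^k = a^k\exp(-b)$. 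One small redundancy: you derive $ab_g = b_g a$ componentwise, but $ab = ba$ already suffices for $a$ to commute with $\exp(-b/k)$, and the fact $g_a^k = e$ that you note is never used.
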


If we return to the motivating example of functional calculus, the same topological obstruction on the spectrum occurs when trying to form $n$th roots of invertible elements instead of logarithms, so one can ask if similar results hold for roots. Some simple counterexamples show that there must be a relationship between the size of the group $\Z_n$ and the order of the root, so these results are more algebraic in motivation than analytic. 

\begin{prop}\label{prop:tagish}
Suppose $A$ is a $\Z_n$-graded Banach algebra with no nontrivial idempotents. If $a$ is a nontrivial homogeneous element that is also invertible, then $a$ cannot have an $n$th root $b$ such that all the homogeneous components $b_k$ commute.
\end{prop}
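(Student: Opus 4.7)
The plan is to reduce the hypothetical $n$th root $b$ to a single element $u \in A$ whose $n$th power equals the scalar $\omega^j$, then use the no-nontrivial-idempotents hypothesis via Lagrange interpolation to force $u$ to be a scalar, and finally contradict the nontriviality of the grading degree of $a$.

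Concretely, let $T$ be the grading automorphism, so $T(x) = \omega^k x$ for $x \in A_k$, and let $a \in A_j$ with $1 \leq j \leq n-1$. Since the homogeneous components $b_k$ pairwise commute, $b$ and $T(b) = \sum_k \omega^k b_k$ lie in a common commutative subalgebra, and in particular $b$ commutes with $a = b^n$, which makes $b^{n-1}a^{-1}$ a two-sided inverse for $b$. I would set $u = T(b) b^{-1}$ and compute, using commutativity together with the fact that $T$ is an algebra automorphism,
\[
u^n = T(b)^n b^{-n} = T(b^n) a^{-1} = T(a) a^{-1} = \omega^j.
\]
Thus $u$ is annihilated by $p(X) = X^n - \omega^j$, whose $n$ roots $z_0, \dots, z_{n-1}$ are distinct.

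The hard part will be passing from this polynomial identity to a scalar identity for $u$. I would invoke Lagrange interpolation, letting $e_i(X) = \prod_{k \neq i} (X - z_k)/(z_i - z_k)$ so that $\sum_i e_i(X) = 1$ and $e_i^2 - e_i$ vanishes on every root of $p$. Since $p$ has simple roots, $p$ divides $e_i^2 - e_i$, so each $e_i(u) \in A$ is idempotent and $\sum_i e_i(u) = 1$. The hypothesis that $A$ has no nontrivial idempotents then forces exactly one $e_{i_0}(u)$ to equal $1$, with the others zero. Since $(X - z_{i_0}) e_{i_0}(X)$ vanishes at every $z_l$ and has degree $n$, it is a scalar multiple of $p(X)$; substituting $u$ and using $p(u) = 0$ and $e_{i_0}(u) = 1$ yields $u = z_{i_0} \cdot 1_A$.

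The conclusion is then a clean contradiction inside the grading. The equation $T(b) = z_{i_0} b$ gives $\sum_k (\omega^k - z_{i_0}) b_k = 0$, and uniqueness of the graded decomposition forces $b_k = 0$ whenever $\omega^k \neq z_{i_0}$. If $z_{i_0}$ is not an $n$th root of unity, then every $b_k$ vanishes, contradicting the invertibility of $a = b^n$; otherwise $z_{i_0} = \omega^m$ for some $m$, and then $\omega^j = z_{i_0}^n = \omega^{mn} = 1$ contradicts $j \not\equiv 0 \pmod n$. The essential role of the commutativity hypothesis on the $b_k$ is in producing the scalar polynomial equation $u^n = \omega^j$; this is exactly analogous to the way Theorem \ref{restatement} leverages commutation among the homogeneous components of $b$.
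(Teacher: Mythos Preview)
Your argument is correct. Both you and the paper begin identically---form $u = T(b)\,b^{-1}$ (the paper writes $b^{-1}T(b)$, the same element in the commutative subalgebra generated by the $b_k$) and compute $u^n = \omega^j$---but then diverge. The paper argues spectrally: $\sigma(u)$ is finite by the spectral mapping theorem and connected by the idempotent hypothesis via holomorphic functional calculus, hence a singleton $\{c\}$; thus $u = c + \varepsilon$ with $\varepsilon$ quasinilpotent, and an iteration $T^k b = b\prod_{m=1}^{k}(c + T^{m-1}\varepsilon)$ eventually shows $(1-\omega^j)b$ is simultaneously invertible and quasinilpotent. Your route is purely algebraic: from the genuine polynomial identity $p(u)=0$ (not merely the spectral statement) and the simplicity of the roots, Lagrange interpolation manufactures idempotents in $A$ directly, forcing $u$ to be an \emph{exact} scalar rather than scalar-plus-quasinilpotent; the graded decomposition of $T(b) = z_{i_0}b$ then yields the contradiction without any iteration or quasinilpotent bookkeeping. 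Your argument is shorter and avoids functional calculus entirely; the paper's approach, on the other hand, would adapt more readily to settings where one only has a spectral constraint on $u$ rather than a polynomial one. As a minor remark, your final case split is harmless but redundant: since $z_{i_0}^n = \omega^j \neq 1$, the possibility $z_{i_0} = \omega^m$ never actually arises.
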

\begin{proof}
Suppose $b$ is such an $n$th root of $a$, so that $b$ is also invertible and commutes with $a$. Consequently, if $T$ is the isomorphism associated to the graded algebra such that $T(a) = \omega^j a$, then the fact that the homogeneous components $b_k$ of $b$ all commute implies that $b^{-1}$ and $Tb$ commute. This shows that $(b^{-1} Tb)^n = b^{-n} T(b^n)$, which is equal to $a^{-1} Ta = \omega^j$. Now, $b^{-1}Tb$ is an $n$th root of a constant, so by the spectral mapping theorem, its spectrum is finite. Also, the spectrum must be connected because $A$ has no nontrivial idempotents, so $\sigma(b^{-1}Tb) = \{c\}$ and $b^{-1}Tb = c + \varepsilon$, where $\varepsilon$ is quasinilpotent ($\sigma(\varepsilon) = \{0\}$) and $c^n = \omega^j$. 

All elements that follow are in the closed subalgebra generated by elements of the form $T^k b$ or $T^k (b^{-1})$, which is commutative. The equation $b^{-1}Tb = c + \varepsilon$ implies that $Tb = b(c + \varepsilon)$, and an inductive argument shows that $T^k b = b \cdot \prod\limits_{j = 1}^k (c + T^{j - 1}\varepsilon)$. When $k = n$ this says $b = T^n b = b \cdot \prod\limits_{j = 1}^n (c + T^{j - 1}\varepsilon)$. Since $\varepsilon$ is quasinilpotent, each $T^{j - 1} \varepsilon$ is quasinilpotent, and the commuting product $\prod\limits_{j = 1}^n (c + T^{j - 1}\varepsilon)$ is equal to $c^n + \delta$ = $\omega^j + \delta$ where $\delta$ is quasinilpotent. The element $\delta$ commutes with $b$, so $b = b \cdot \prod\limits_{j = 1}^n (c + T^{j - 1}\varepsilon) = b(\omega^j + \delta) = b\omega^j + \gamma$ where $\gamma$ is quasinilpotent. Finally, $a$ was a nontrivial homogeneous element, so $1 - \omega^j \not= 0$, and $(1 - \omega^j)b = \gamma$ is both invertible (as $b$ is invertible) and quasinilpotent. This is a contradiction.
\end{proof}

The proof technique for the previous proposition is directly inspired by Taghavi's methods. Invertibility of the element $a$ and the relationship between the order of the group $\Z_n$ and the order of the root cannot be removed. These requirements can be seen in the commutative algebra $C(\S^1)$ with the standard $\Z_2$ antipodal action.

\begin{exam}
If $\S^1$ is realized as the unit sphere of $\R^2$, then the coordinate functions $x_1$ and $x_2$ in $C(\S^1)$ are odd. Since $\sigma(x_i) = [-1,1]$ and $x_i$ is a normal element of a $C^*$-algebra, we may apply the continuous functional calculus for the following square root function.

\beu
g(t) =
\begin{cases}
\sqrt{t}, & t \in [0, 1] \\
i\sqrt{-t}, & t \in [-1, 0]
\end{cases}
\eeu
Now, $g(x_i)$ is a square root of the (non-invertible) odd element $x_i$.
\end{exam}

\begin{exam}
The invertible odd function $f(z) = z^3$ in $C(\S^1)$ certainly has a third root.
\end{exam}

The previous proposition still assumes that $A$ has no nontrivial idempotents, which can be problematic when $A$ is a noncommutative $C^*$-algebra. For $\Z_2$-graded Banach algebras this can be resolved by modifying the original proof to construct an idempotent.

\begin{thm}\label{thm:thenumber23}
Suppose $A$ is a $\Z_2$-graded Banach algebra with the property that no idempotent $P$ satisfies $T(P) = 1 - P$. Then if $f \in A$ is odd and invertible, there is no $g \in A$ such that $g^2 = f$ and $g$ commutes with $Tg$.
\end{thm}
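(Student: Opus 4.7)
The plan is to proceed by contradiction, in the spirit of Proposition \ref{prop:tagish}: assume a square root $g$ of $f$ exists with $g$ commuting with $T(g)$, form the auxiliary element $h = g^{-1} T(g)$, and from it construct an idempotent of the type that the hypothesis on $A$ forbids.

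First I would work out the key properties of $h$. Since $f$ is invertible, so is $g$, and $h$ is well-defined. Using $g^2 = f$ and the commutativity of $g$ with $T(g)$, a short rearrangement gives $h^2 = g^{-2}\, T(g^2) = f^{-1} T(f) = -1$, where the last step uses that $f$ is odd. Applying $T$ and using $T^2 = I$, I obtain $T(h) = T(g)^{-1} g$, which equals $h^{-1}$ (again by commutativity) and therefore equals $-h$ (since $h^2 = -1$). So $h$ is itself odd and squares to $-1$.

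With $h$ in hand, I would write down $P = (1 - ih)/2 \in A$ and verify, using only the identity $h^2 = -1$, that $P^2 = P$. The relation $T(h) = -h$ then gives $T(P) = (1 + ih)/2 = 1 - P$. Thus $P$ is an idempotent satisfying $T(P) = 1 - P$, directly contradicting the hypothesis on $A$ and completing the proof.

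The main technical point, and the one place where the assumption on $g$ is actually used, is the commutativity bookkeeping: both $h^2 = -1$ and $T(h) = h^{-1}$ require commuting $g^{-1}$ past $T(g)$ inside products, which is precisely where $[g, T(g)] = 0$ enters. Once this is in place, the rest is an explicit algebraic construction of $P$ that bypasses the spectral and holomorphic functional calculus arguments of Proposition \ref{prop:tagish}. This is the natural route here: the hypothesis now forbids a very specific kind of idempotent rather than all nontrivial idempotents, so it is more efficient to build the forbidden idempotent by hand than to produce one through functional calculus on a disconnected spectrum.
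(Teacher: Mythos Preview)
Your proof is correct and follows essentially the same approach as the paper: form $h = g^{-1}T(g)$ (the paper uses $a = T(g)g^{-1}$, which is the same element by the commutativity assumption), verify $h^2 = -1$ and $T(h) = -h$, and then set $P = \tfrac{1}{2}(1 \mp ih)$ to obtain an idempotent with $T(P) = 1 - P$. The only difference is the immaterial choice of sign in defining $P$.
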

\begin{proof}
Suppose $g^2 = f$ where $g$ and $Tg$ commute. Then $g$ is invertible and 

\beu
(T(g) g^{-1})^2 = T(g^2)(g^2)^{-1} = T(f) f^{-1} = -1
\eeu
holds. Denote the element $T(g) g^{-1}$ by $a$ and note that $a^2 = -1$, so $a^{-1} = -a$. However, we also have that $T(a) = -a$ by a simple calculation.
\beu
T(a) = T(T(g) g^{-1}) = g T(g)^{-1} = (T(g)g^{-1})^{-1} = a^{-1}= -a
\eeu
This means $a$ is odd, so $a$ is an odd square root of $-1$. It follows that $P = \frac{1}{2} + \frac{i}{2} a$ is an idempotent with $T(P) = 1 - P$.
\end{proof}

The condition $T(P) \not= 1 - P$ is not only sufficient in the above theorem, but also necessary. If $T(P) = 1 - P$, then $\pi_0(P) = \frac{P + T(P)}{2} = 1/2$, so if we examine the odd component $\pi_1(P) = b$, the idempotent equation $(1/2 + b)^2 = 1/2 + b$ implies that $b^2 = 1/4$. Consequently, $\sigma(b)$ is finite (and excludes 0) by the spectral mapping theorem. We may then form a square root $c$ of the invertible odd element $b$ by the holomorphic functional calculus. Since $b$ is odd and $c$ is in the closed, unital subalgebra generated by $b$ and elements of the form $(b - \lambda)^{-1}$, it follows that $c T(c) = T(c) c$.

For a $\Z_2$ action on a $C^*$-algebra, if we assume $T(P) \not= 1 - P$ on the smaller class of projections (instead of all idempotents), then we obtain a similar result with a slightly weaker conclusion.

\begin{thm}\label{thm:letsdothatagain}
Suppose $A$ is a $C^*$-algebra with a (*-compatible) $\Z_2$ action such that no projection $P$ satisfies $T(P) = 1 - P$. Then if $f \in A$ is an odd unitary element, there is no unitary $g \in A$ such that $g^2 = f$ and $g$ commutes with $Tg$.
\end{thm}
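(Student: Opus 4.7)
The plan is to mirror the proof of Theorem \ref{thm:thenumber23}, with the additional observation that the hypotheses of $C^*$-structure, $*$-compatibility of $T$, and unitarity of $f$ and $g$ are exactly what one needs to upgrade the idempotent constructed there to a projection, so that the weaker hypothesis (concerning projections rather than arbitrary idempotents) still produces a contradiction.

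\medskip

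\noindent\textbf{Setup.} Assume for contradiction that such a unitary $g$ exists. Set $a := T(g)\,g^{-1} = T(g)\,g^{*}$. As in the earlier proof, since $g$ commutes with $T(g)$ and $T(f) = -f$, one computes
\[
a^{2} = T(g)\,g^{-1}\,T(g)\,g^{-1} = T(g)^{2}\,g^{-2} = T(f)\,f^{-1} = -1,
\]
and
\[
T(a) = T(T(g))\,T(g)^{-1} \cdot (g\,T(g)^{-1})^{0} \ \text{(same one-line calculation as before)} \ = -a.
\]
So $a$ is an odd square root of $-1$, exactly as before.

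\medskip

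\noindent\textbf{The new ingredient.} Because $T$ is $*$-compatible and $g$ is unitary, $T(g)$ is unitary, and hence $a = T(g)\,g^{*}$ is a product of unitaries, hence unitary. Combining $a^{*}a = 1$ with $a^{2} = -1$ gives $a^{*} = a^{-1} = -a$, so $a$ is skew-adjoint.

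\medskip

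\noindent\textbf{Building a projection.} Define $P := \tfrac{1}{2} + \tfrac{i}{2}\,a$. The computation $P^{2} = P$ is unchanged from Theorem \ref{thm:thenumber23} (it uses only $a^{2} = -1$). Self-adjointness is the new feature:
\[
P^{*} \;=\; \tfrac{1}{2} + \bigl(\tfrac{i}{2}\bigr)^{*} a^{*} \;=\; \tfrac{1}{2} - \tfrac{i}{2}(-a) \;=\; P,
\]
so $P$ is actually a projection. Finally, $T(a) = -a$ immediately gives $T(P) = \tfrac{1}{2} - \tfrac{i}{2}\,a = 1 - P$, contradicting the hypothesis that no projection satisfies $T(P) = 1-P$.

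\medskip

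There is no real obstacle beyond bookkeeping; the entire subtlety is that dropping the hypothesis from "no idempotent $P$ satisfies $T(P)=1-P$" to "no projection $P$ satisfies $T(P)=1-P$" forces us to strengthen the hypothesis on $g$ from "invertible" to "unitary," so that $a$ is unitary and $P$ is automatically self-adjoint. One should also briefly remark (as was done after Theorem \ref{thm:thenumber23}) whether the converse fails, but the statement itself requires only the forward direction.
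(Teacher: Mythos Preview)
Your proof is correct and follows essentially the same approach as the paper: the paper's proof is a one-line remark that the argument of Theorem~\ref{thm:thenumber23} goes through verbatim, with the sole addition that unitarity of $g$ makes $a = T(g)g^{*}$ unitary, whence $a^{*} = a^{-1} = -a$ and $P$ is self-adjoint. Your line computing $T(a)$ contains the stray factor $(g\,T(g)^{-1})^{0}$, which is harmless but should be cleaned up; the correct chain is $T(a) = g\,T(g)^{-1} = a^{-1} = -a$.
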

\begin{proof}
The proof is the same as the proof of the previous theorem, with the addition that since $g$ is unitary, $a = T(g)g^{-1} = T(g)g^*$ satisfies $a^* = a^{-1} = -a$, and the resulting $P$ is self-adjoint.
\end{proof}

\noindent \textit{Remark.} As in the previous theorem, the condition $T(P) \not= 1 - P$ is also necessary here. The only change to the argument is that the odd component $b$ of a projection satisfying $T(P) = 1 - P$ is also self-adjoint, which with the equation $b^2 = 1/4$ implies that $2b$ is unitary. Again, this element has finite spectrum, and the square root formed by the continuous functional calculus is guaranteed to be unitary.

Since the homogeneous subspaces $A_0$ and $A_1$ of a $C^*$-algebra with a $\Z_2$ action are norm-closed and closed under the adjoint operation, any even or odd element $a$ has $a a^*$ and $a^* a$ even, and the positive square root of either $a a^*$ or $a^* a$ from the continuous functional calculus is even as well (as a limit of polynomials in an even element). Similarly, the inverse of an even or odd element remains even or odd, as seen by examining the effect of the isomorphism $T$. These observations show that if we start with a homogeneous invertible and scale it to form a unitary, the result is still homogeneous, giving some equivalent formulations of the projection condition.

\begin{prop}\label{eqPforms} The following conditions are equivalent for a $C^*$-algebra $A$ with a $\Z_2$ action defined by isomorphism $T$.
\begin{enumerate}
\item\label{original}There is a projection $P \in A$ with $T(P) = 1 - P$.
\item\label{oddunitary}There is some $a \in A$ which is odd, self-adjoint, and satisfies $a^2 = 1$.
\item\label{oddinvertible}There is some $b \in A$ which is odd, self-adjoint, and invertible.
\end{enumerate}
\end{prop}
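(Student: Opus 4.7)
The plan is to prove the cyclic chain of implications $(\ref{original}) \Rightarrow (\ref{oddunitary}) \Rightarrow (\ref{oddinvertible}) \Rightarrow (\ref{original})$, with the last implication being the only one that requires any nontrivial work. The first two are essentially algebraic manipulations, while the last uses the continuous functional calculus together with the fact that the $\Z_2$ action is a $*$-isomorphism.

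For $(\ref{original}) \Rightarrow (\ref{oddunitary})$, I would set $a = 2P - 1$. Self-adjointness follows from $P = P^*$, and $a^2 = 4P^2 - 4P + 1 = 1$ since $P$ is a projection. The equation $T(a) = 2T(P) - 1 = 2(1 - P) - 1 = -a$ shows that $a$ is odd. For $(\ref{oddunitary}) \Rightarrow (\ref{oddinvertible})$, the element $a$ from condition (\ref{oddunitary}) already serves as $b$, since $a^2 = 1$ forces $a$ to be invertible with $a^{-1} = a$.

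The step $(\ref{oddinvertible}) \Rightarrow (\ref{original})$ is where the continuous functional calculus enters. Given an odd, self-adjoint, invertible $b$, the spectrum $\sigma(b)$ is a compact subset of $\R \setminus \{0\}$, so the sign function $s(t) = t/|t|$ is continuous on $\sigma(b)$. Setting $a = s(b)$, I get $a^* = a$ and $a^2 = 1$ automatically. To confirm that $a$ is odd, I would invoke the fact that a $*$-automorphism of a $C^*$-algebra commutes with the continuous functional calculus on self-adjoint elements, together with the observation that $s$ is an odd function, yielding $T(a) = T(s(b)) = s(T(b)) = s(-b) = -s(b) = -a$. Then $P = (1 + a)/2$ is a self-adjoint idempotent, hence a projection, satisfying $T(P) = (1 - a)/2 = 1 - P$.

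There is no genuine obstacle here; the only point that deserves a sentence of justification is the compatibility of the continuous functional calculus with $T$, and the choice of an odd representative ($s$) for what is essentially the polar decomposition of $b$. The arguments give the full equivalence in all three directions, so the proposition is established.
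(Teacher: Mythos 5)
Your proof is correct and follows essentially the same route as the paper: the correspondence $a = 2P - 1 \leftrightarrow P = (1+a)/2$ for the equivalence with condition \ref{original}, and the polar-type scaling $b \mapsto b|b|^{-1}$ (your $s(b)$) via the continuous functional calculus for \ref{oddinvertible} $\Rightarrow$ \ref{oddunitary}, with oddness preserved because $T$ is a $*$-isomorphism commuting with the functional calculus. The only cosmetic difference is that you arrange the implications cyclically while the paper proves the two pairwise equivalences directly.
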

\begin{proof}
Condition $\ref{oddunitary}$ certainly implies condition $\ref{oddinvertible}$, and the reverse implication holds by scaling $b$ to a unitary $a = b (b^2)^{-1/2} = b |b|^{-1}$, which remains odd and self-adjoint. If $P$ is a projection with $T(P) = 1 - P$, then its even component is $\frac{1}{2}(P + T(P)) = 1/2$, so $P$ is of the form $1/2 + c$, where $c$ is self-adjoint and odd. The idempotent equation $(1/2 + c)^2 = 1/2 + c$ implies that $c^2 = 1/4$, so $a = 2c$ satisfies $a^2 = 1$, and condition \ref{original} implies condition \ref{oddunitary}. Similarly, if $a$ is as in condition \ref{oddunitary}, then $P = 1/2 + a/2$ is a projection with $T(P) = 1 - P$.
\end{proof}

The condition $T(P) \not= 1 - P$ allows for some projections to exist in the algebra $A$. As an example, the quantum $n$-torus $A_\theta$, for $\theta$ an antisymmetric $n \times n$ matrix over $\R$, is generated by unitaries $U_1, \ldots, U_n$ satisfying the following noncommutativity condition.

\be\label{NCtorus} U_k U_j = e^{2 \pi i \theta_{jk}} U_j U_k \ee

For most values of $\theta$, $A_\theta$ has nontrivial projections. The algebra is also a well-established example of a deformation quantization (see \cite{ri93}, Chapter 10) of $C(\T^n)$. In the language of M. Rieffel in \cite{ri93}, $A_\theta = C(\T^n)_J$, where $J$ is the antisymmetric matrix $\theta/2$ and $C(\T^n)$ is equipped with an $\R^n$ action defined by translation in angular coordinates. Each $A_\theta$ contains the common dense subalgebra $C^\infty(\T^n)$ acting under different products $\cdot_\theta$ and norms $|| \cdot ||_\theta$, but with the same linear structure, adjoint, and multiplicative identity. The unitary functions $u_p \in C^\infty(\T^n)$ for $p \in \Z^n$, defined by $u_p(w_1, \ldots,w_n) = w_1^{p_1}\cdots w_n^{p_n}$, are in spectral subspaces for the $\R^n$ action, so they satisfy a relation tying $\cdot_\theta$ to the usual commutative product.

\be\label{specprod}
u_p \cdot_\theta u_q = e^{\pi i [(\theta p) \cdot q]} u_{p + q}
\ee

This is more general than the relation $u_p \cdot_\theta u_q = e^{2\pi i [(\theta p) \cdot q]} u_q \cdot_\theta u_p$. Moreover, the generators $U_1, \ldots, U_n$ of any $A_\theta$ are of this form: $U_1 = u_{(1, 0, \ldots, 0)}$, $U_2 = u_{(0, 1, 0, \ldots, 0)}$, and so on. In general the relationship in (\ref{specprod}) between the product $\cdot_\theta$ and the usual commutative product $u_p u_q = u_{p + q}$ shows that the antipodal map on $C^\infty(\T^n)$ defines a $\Z_2$ structure that is simultaneously compatible with each product $\cdot_\theta$. This is a result of the fact that the antipodal map on $C(\T^n)$ commutes with the $\R^n$ action of translation in angular coordinates, which defines the quantization. Any $*$-polynomial under $\cdot_\theta$ in the generators $U_i$ can then be written as a linear combination $\sum a_p u_p$ by pushing to the commutative product, and the $\Z_2$ action takes the form $T(\sum a_p u_p) = \sum (-1)^{p_1 + p_2 + \ldots + p_n} a_p u_p$. Now, the $\Z_2$-graded algebras $A_{\theta + \hslash \phi}$ also form a strict deformation quantization (\cite{ri93}, Definition 9.2, Theorem 9.3), leading to the following continuity assertions for fixed $f, g \in C^\infty(\T^n)$. 
\be\label{torusnormfixed}
\lim_{\hslash \to 0} ||f||_{\theta + \hslash \phi} = ||f||_\theta
\ee

\be\label{torusprodnorm}
\lim_{\hslash \to 0} ||f \cdot_{\theta + \hslash \phi} g - f \cdot_\theta g||_{\theta + \hslash \phi} = 0
\ee

These limits do not use the full strength of strict quantization, but even so, they will interact with the common $\Z_2$ structure on $A_\theta$ to help show that each $A_\theta$ has $T(P) \not= 1 - P$ for all projections. Note that since $C^\infty(\T^n)$ is $T$-invariant and $T$ is $*$-compatible, we can approximate homogeneous elements in $A_\theta$ with homogeneous elements of $C^\infty(\T^n)$, where we may demand the approximations remain self-adjoint if the $A_\theta$ element is self-adjoint. By the previous comments, these smooth elements remain (self-adjoint and) homogeneous when viewed in different noncommutative tori.

\begin{prop}\label{prop:2torus} There is no projection $P$ with $T(P) = 1 - P$ in any quantum $n$-torus $A_\theta$.
\end{prop}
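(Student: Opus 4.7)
The plan is to assume for contradiction that there is a projection $P \in A_\theta$ with $T(P) = 1 - P$, and to derive a contradiction by transporting this obstruction along the deformation $s \mapsto A_{s\theta}$ to the commutative limit $A_0 = C(\T^n)$. By Proposition \ref{eqPforms}, the hypothesis is equivalent to the existence of an odd self-adjoint element $a \in A_\theta$ with $a^2 = 1$, and I will work with this formulation throughout.

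First I would approximate $a$ by an odd self-adjoint smooth element $b \in C^\infty(\T^n)$ with $\|a - b\|_\theta$ as small as desired (available by the remarks just before the proposition). The quantity $\|b \cdot_\theta b - 1\|_\theta$ is then small, and $b$ simultaneously belongs to every $A_{s\theta}$ while remaining odd and self-adjoint. By (\ref{torusnormfixed}) and (\ref{torusprodnorm}) applied along the ray $s \mapsto s\theta$, the function $s \mapsto \|b \cdot_{s\theta} b - 1\|_{s\theta}$ is continuous on $[0,1]$.

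Next I would run a continuation argument. Let
\[
S := \{s \in [0,1] : \text{there exists an odd self-adjoint } a_s \in A_{s\theta} \text{ with } a_s^2 = 1\},
\]
so that $1 \in S$ by assumption. The set $S$ is open: given $s_0 \in S$, approximate $a_{s_0}$ by a smooth odd self-adjoint $\beta$ with $\|\beta \cdot_{s_0\theta} \beta - 1\|_{s_0\theta} < 1/3$, and apply continuity to keep this bound below $1/2$ on a neighborhood of $s_0$. There $\beta \cdot_{s\theta} \beta$ is even, self-adjoint, positive, and invertible in $A_{s\theta}$, and its commuting even positive square root (produced by continuous functional calculus, which preserves evenness since the even subalgebra is a $C^*$-subalgebra) yields $a_s := \beta \cdot_{s\theta} (\beta \cdot_{s\theta} \beta)^{-1/2}$, an odd self-adjoint square root of $1$ in $A_{s\theta}$. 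On the other hand, $0 \notin S$: an odd real-valued continuous function $a_0$ on the connected space $\T^n$ must take values of both signs (since $a_0(-w) = -a_0(w)$), so by the intermediate value property it must vanish somewhere and hence cannot satisfy $a_0^2 \equiv 1$.

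The main obstacle is closing the deformation by extending $S$ down to $0$. Openness plus $1 \in S$ does not force $S = [0,1]$ directly; one also needs closedness, or an equivalent compactness step. My approach is to show $S$ is closed: given $s_n \in S$ with $s_n \to s^*$, I would choose smooth approximations $\beta_n$ of $a_{s_n}$ whose Fr\'echet $C^\infty$-seminorms are uniformly controlled (using that each $a_{s_n}$ is a self-adjoint unitary of $A_{s_n\theta}$-norm $1$, and that the quantitative estimates underlying the continuity in (\ref{torusnormfixed}) and (\ref{torusprodnorm}) behave uniformly on $C^\infty$-bounded sets), extract a convergent subsequence, and pass to the limit to obtain $\beta^* \in C^\infty(\T^n)$ with $\|\beta^* \cdot_{s^*\theta} \beta^* - 1\|_{s^*\theta}$ small enough to invoke the functional-calculus construction above, thereby placing $s^* \in S$. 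Combined with openness this forces $S = [0,1]$, contradicting $0 \notin S$; securing the required uniformity of the smooth approximations is the subtlest part of the argument.
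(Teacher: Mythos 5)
Your reduction via Proposition \ref{eqPforms}, the smooth approximation, the openness of $S$, and the exclusion of $0$ from $S$ are all fine, and the first two steps coincide with the paper's argument. But the proof hinges on the closedness of $S$, which you correctly identify as the subtle point and which is in fact a genuine gap. The proposed mechanism --- choosing smooth approximants $\beta_n$ of the witnesses $a_{s_n}$ with uniformly bounded Fr\'echet seminorms and extracting a convergent subsequence --- is not available: the $a_{s_n}$ are merely norm-one elements of varying $C^*$-algebras, and smoothing (say by convolving against an approximate identity for the $\R^n$-action) trades approximation quality against growth of higher seminorms, with no uniform control as $n \to \infty$. Nothing prevents every odd self-adjoint unitary in $A_{s_n\theta}$ from being ``wild,'' so there is no compactness to exploit, and openness alone only yields that $S$ is a union of open intervals --- a priori something like $(c,1]$ with $c>0$, which never reaches the commutative endpoint. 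The cited limits (\ref{torusnormfixed}) and (\ref{torusprodnorm}) are stated for a \emph{fixed} smooth element and give no uniformity over $C^\infty$-bounded sets without further argument.

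The paper sidesteps the need for a global continuation entirely: a \emph{single} small perturbation (exactly your openness step) moves $\theta$ to a nearby antisymmetric $\psi$ all of whose entries are rational with odd denominators, while keeping the fixed smooth $b$ odd, self-adjoint, and invertible in $A_\psi$. The rational torus $A_\psi$ then admits an antipodally equivariant unital $*$-homomorphism $E: A_\psi \to M_q(C(\T^n))$, $U_j \mapsto w_j V_j$, where the matrix dimension $q$ can be taken \emph{odd} precisely because the denominators are odd. The image of $b$ is an invertible self-adjoint $q\times q$ matrix with odd entries, and its determinant is a nowhere-vanishing, real-valued, odd function on the connected space $\T^n$ --- the same intermediate-value contradiction you use at $s=0$, but now reached without ever deforming to the commutative fiber. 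If you want to salvage your outline, replace the closedness step with this rational-approximation-plus-finite-dimensional-representation device.
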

\begin{proof}

Suppose for some $\theta$ there is a projection $P \in A_\theta$ with $T(P) = 1 - P$, which by Proposition \ref{eqPforms} means there is a self-adjoint odd element $a \in A_\theta$ with $a \cdot_\theta a = 1$. Approximate $a$ with a self-adjoint, odd element $b \in C^\infty(\T^n)$ which has $||b \cdot_\theta b - 1||_\theta < 1$. When the parameter of the algebra $A_\theta$ changes, $b$ remains odd and self-adjoint. Perturb the entries of the antisymmetric matrix $\theta$ using (\ref{torusprodnorm}) and (\ref{torusnormfixed}) multiple times to replace $\theta$ with an antisymmetric $\psi$, where each entry of $\psi$ is rational and has odd denominator. In particular, $b \in C^\infty(\T^n)$ is still odd, self-adjoint, and invertible as an element of $A_\psi$.

We may form a homomorphism from $A_\psi$ to a matrix algebra over $C(\T^n)$ in a way similar to \cite{ma12}. First, since $\psi$ is rational and antisymmetric, an inductive argument shows there are unitaries $V_1, \ldots, V_n$ in some $U_q(\C)$ such that $V_k V_j = e^{2 \pi i \psi_{jk}} V_j V_k$ for all $j$ and $k$. Moreover, since the denominator of each $\psi_{jk}$ is odd, we may form these matrices so that the dimension $q$ of the matrix algebra is an odd integer. The universal property of $A_\psi$ shows that there is a $*$-homomorphism

\beu
E: A_\psi \to M_q(C(\T^n))
\eeu
\beu
U_j \mapsto w_jV_j
\eeu
where $w_j \in C(\T^n)$ is the $j$th coordinate function. Since the generators $U_j$ of $A_\phi$ are odd, and their images $w_j V_j$ have odd functions in every entry, the map $E$ is equivariant for the (entrywise) antipodal maps. The image of $b \in A_\psi$ is then a self-adjoint, invertible matrix of odd dimension $q$, with each entry an odd function on $\T^n$. The determinant of this matrix is a nowhere vanishing, real-valued, odd function on $\T^n$, which gives a contradiction since $\T^n$ is connected.
\end{proof} \noindent\textit{Remark.} This argument applies equally well to $M_{2k + 1}(A_\theta)$. Counterexamples can be easily constructed for $M_{2k}(A_\theta)$, such as $P = \frac{1}{2}I_2 + \frac{1}{2}\left[ \begin{array}{cc}
0 & U_1\\
U_1^*  & 0\\
 \end{array} \right]$.

\vspace{.05 in}

As alluded to in the above proof, when $A$ is a graded Banach algebra, $M_n(A)$ is graded as well; the homogeneous subspaces consist of matrices with entries in the homogeneous subspaces of $A$. However, $M_n(A)$ will always have nontrivial idempotents for $n \geq 2$, so Taghavi's Main Theorem 1 in \cite{ta12} and the similar result Proposition \ref{prop:tagish} in this section do not apply. However, the new condition $T(P) \not= 1 - P$ allows for some idempotents, and the matrix dimension will play a key role. For example, if $A$ is a $C^*$-algebra and there exists an $n \times n$ unitary matrix $F$ over $A$ which has odd entries, then $P = \left[ \begin{array}{cc}
1/2 & F/2\\
F^*/2  & 1/2\\
 \end{array} \right]$ is a projection in the $2n \times 2n$ matrix algebra with $T(P) = I - P$. 

The condition $T(P) \not= 1 - P$ has a simple restatement when the algebra is $C(X)$ where, say, $X$ is compact and has finitely many components, and $T$ arises from a continuous $\Z_2$ action on $X$ (written as $x \mapsto -x$). In this case, the $\Z_2$ action pairs up the connected components of $X$, where sometimes a component pairs with itself. If no component pairs with itself, then group the finitely many components into two disjoint pieces $X_1$ and $X_2$ separating these pairs and define a function which is zero on $X_1$ and one on $X_2$. This projection satisfies $T(P) = 1 - P$. Insisting that $T(P)$ is never $1 - P$ then means at least one component pairs with itself. In this case, the quotient algebra of functions on this component reduces the problem to the idempotentless case, so the actual benefit of the new condition is for noncommutative algebras (for example, $A_\theta$ above, which fundamentally has nontrivial projections). In $M_n(C(X))$, a projection $P$ assigns to each $x \in X$ a projection $P_x \in M_n(\C)$, which as a linear map is the orthogonal projection onto a subspace of $\C^n$, forming a continuous vector bundle. If $M_n(C(X))$ inherits the $\Z_2$ action from $X$ and $T(P) \not= I - P$, then there is some $x$ with $P_{-x} \not= I - P_x$. This means the vector bundle assigns some point to a subspace other than the orthogonal complement of the subspace assigned to its opposite point.

A stronger version of the condition demands that if $P$ is a projection and $T(P)P = 0$, then $P = 0$. In $C(X)$ as above, this means that every component of $X$ must pair with itself under the $\Z_2$ action. For $M_n(C(X))$, if $P$ is a nonzero projection (vector bundle), some $x$ must have $P_{-x} P_{x} \not= 0$, meaning the subspaces assigned to pairs of opposite points must not always be orthogonal to each other. This requirement allows for a stronger version of Theorem \ref{thm:thenumber23}, in which the odd invertible element that allegedly has no square root is replaced by a projection plus an odd element. This type of element occurs frequently in $K$-theory, as a unitary matrix $F$ over a $C^*$-algebra may have odd entries, but $F \oplus I$ does not.

\beu \ba
\textrm{ Projection + Odd } =  \left[ \begin{array}{cc}
0 & 0\\
0  & I\\
 \end{array} \right] + \left[ \begin{array}{cc}
F & 0\\
0  & 0\\
 \end{array} \right] = \left[ \begin{array}{cc}
F & 0\\
0  & I\\
 \end{array} \right]
\ea \eeu

\begin{thm}
Suppose $A$ is a $C^*$-algebra with $\Z_2$ action generated by $T$ such that every nonzero projection $P$ has $T(P)P \not= 0$. If $f$ is a nonzero odd element and $\alpha$ is a projection such that $\alpha + f$ is unitary and $\alpha f = f \alpha = 0$, then there is no unitary $g$ such that $gT(g) = T(g)g$ and $g^2 = \alpha + f$.
\end{thm}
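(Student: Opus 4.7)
The plan is to adapt the strategy from Theorems \ref{thm:thenumber23} and \ref{thm:letsdothatagain}: set $a := T(g)g^{-1}$ and extract from $a$ a spectral projection $P$ with $T(P)P = 0$, which by hypothesis must vanish, eventually forcing $f = 0$.

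First I would decode the unitarity of $\alpha + f$ together with $\alpha f = f\alpha = 0$. Taking adjoints gives $\alpha f^{*} = f^{*}\alpha = 0$, and $(\alpha + f)(\alpha + f^{*}) = 1$ collapses to $ff^{*} = 1 - \alpha$. Since $f$ is odd and $T$ is $*$-compatible, $ff^{*}$ is even, so $\alpha$ is even and $T(\alpha) = \alpha$. Using $(\alpha + f)^{-1} = \alpha + f^{*}$ and $T(\alpha + f) = \alpha - f$, a direct computation yields
\[
T(\alpha + f)(\alpha + f)^{-1} = (\alpha - f)(\alpha + f^{*}) = \alpha - ff^{*} = 2\alpha - 1.
\]
Because $g$ commutes with $T(g)$, the element $a := T(g)g^{-1}$ satisfies $a^{2} = T(g^{2})(g^{2})^{-1} = 2\alpha - 1$ and $T(a) = gT(g)^{-1} = a^{-1} = a^{*}$.

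Next I would exploit the spectral constraint $a^{4} = (2\alpha - 1)^{2} = 1$. The unitary (hence normal) element $a$ has $\sigma(a) \subseteq \{1, -1, i, -i\}$, so the spectral projection $P := \chi_{\{i\}}(a)$ defined by continuous functional calculus is a self-adjoint projection in $A$. Because $T$ is a $*$-isomorphism commuting with continuous functional calculus and $T(a) = a^{-1}$, we obtain $T(P) = \chi_{\{i\}}(a^{-1}) = \chi_{\{-i\}}(a)$, which is orthogonal to $P$. Hence $T(P)P = 0$, and the standing hypothesis forces $P = 0$. The identical argument applied to $\chi_{\{-i\}}(a)$ gives $\chi_{\{-i\}}(a) = 0$, so $\sigma(a) \subseteq \{1, -1\}$ and therefore $a^{2} = 1$. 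Combined with $a^{2} = 2\alpha - 1$ this forces $\alpha = 1$, whence $ff^{*} = 1 - \alpha = 0$ and $f = 0$, contradicting the hypothesis that $f$ is nonzero.

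The main subtlety is justifying $T(\chi_{\{i\}}(a)) = \chi_{\{-i\}}(a)$: because $a$ is normal with finite spectrum, $\chi_{\{i\}}$ is continuous on $\sigma(a)$, and the identity then follows from the fact that $*$-isomorphisms commute with continuous functional calculus together with the spectral relation $\sigma(a^{-1}) = \sigma(a)^{-1}$. Everything else in the argument is an algebraic manipulation with the relations $\alpha f = f\alpha = 0$ and $gT(g) = T(g)g$.
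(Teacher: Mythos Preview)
Your proof is correct and follows the same overall strategy as the paper: both set $a = T(g)g^{-1}$, compute $a^{2} = 2\alpha - 1$ with $T(a) = a^{-1}$, note $\sigma(a) \subseteq \{1,-1,i,-i\}$, kill the spectral projections at $\pm i$ via the hypothesis on $A$, and conclude $a^{2} = 1$ forces $f = 0$. The only real difference is in how the projections at $\pm i$ are eliminated: the paper writes $a = iP - iQ - R + S$, expands $1 = T(a)\cdot a$ into sixteen commuting products of projections, and uses positivity to extract $T(P)P = T(Q)Q = 0$; you instead observe directly that $T(\chi_{\{i\}}(a)) = \chi_{\{i\}}(a^{-1}) = \chi_{\{-i\}}(a)$, so $T(P)$ is the orthogonal spectral projection and $T(P)P = 0$ immediately. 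Your route is cleaner and avoids the explicit expansion (and incidentally, once $P = 0$ you already have $\chi_{\{-i\}}(a) = T(P) = 0$ without a second application of the argument).
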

\begin{proof}
The conditions imply that $\alpha f^* = f^* \alpha = 0$ and $\alpha + f f^* = \alpha + f^* f = 1$, the last of which shows that $\alpha$ is even. Suppose $g$ is unitary with $g^2 = \alpha + f$ and $gT(g) = T(g)g$, so $T(g)$ and $g^{-1}$ also commute.
\beu
(T(g) g^{-1})^2 = T(g^2) g^{-2} = T(\alpha + f) (\alpha + f)^*  = (\alpha - f)(\alpha + f^*) = \alpha - ff^* = 2 \alpha - 1
\eeu

Since $\alpha$ is a projection, the spectrum of $2 \alpha - 1$ is contained in $\{-1, 1\}$. The spectral mapping theorem then implies that $\sigma(T(g)g^{-1}) \subset \{i, -i, -1, 1\}$. Moreover, $T(g)g^{-1}$ is unitary, so the continuous functional calculus is applicable, giving the following decomposition.

\beu 
T(g) g^{-1} = iP - iQ -R +S
\eeu

\noindent Here $P, Q, R,$ and $S$ are mutually orthogonal projections with $P + Q + R + S = 1$. Also, $T(g) g^{-1}$ is a unitary element $b$ such that $T(b) = b^{-1} = b^*$, so the commutative $C^*$-algebra it generates is also $T$-invariant, meaning all projections in the following computations commute with each other.

Next, we rephrase the above equation as

\be \label{Tgiswhat}
T(g) = (iP - iQ - R + S)g
\ee

\noindent and apply $T$ to both sides.
\beu \ba
g 	&= (iT(P) - iT(Q) - T(R) + T(S))T(g) \\
	&= (iT(P) - iT(Q) - T(R) + T(S))(iP - iQ - R +S)g
\ea \eeu

\noindent Multiplying out this expression and canceling the invertible element $g$ give that

\be\label{beta}
1 = (-1)\beta_{-1} + i\beta_i + (-i)\beta_{-i} + \beta_1
\ee
where the $\beta$ terms are sums of products of projections.
\beu \ba
&\beta_{-1} 	&= \hspace{.3 cm} T(P)P + T(Q)Q + T(R)S + T(S)R \\
&\beta_i 	&= \hspace{.3 cm} T(P)S + T(Q)R + T(R)Q + T(S)P \\
&\beta_{-i}	&= \hspace{.3 cm} T(P)R + T(Q)S + T(R)P + T(S)Q \\ 
&\beta_{1}	&= \hspace{.3 cm} T(P)Q + T(Q)P + T(R)R + T(S)S
\ea \eeu

Each of the sixteen commuting products of two projections above is a projection. Moreover, any two of these sixteen projections annihilate each other: for example, $T(R)P \cdot T(Q)P = T(Q) P \cdot T(R)P = 0$ because $RQ = QR = 0$. This means that each $\beta$ term is a projection, and the four projections are mutually orthogonal. Equation (\ref{beta}) then implies that $\beta_{-1} = 0$, so $T(P)P + T(Q)Q  + T(R)S + T(S)R = 0$. As each of the terms adding to zero is a projection and therefore positive, $T(P)P = 0 = T(Q)Q$, and by the assumption on the algebra, $P = Q = 0$. Finally, this gives a simpler form of (\ref{Tgiswhat}).

\beu \ba
T(g) = (iP - iQ - R + S)g = (-R + S)g
\ea \eeu

The projections $R$ and $S$ annihilate each other and commute with $g$, as they are in the $C^*$-algebra generated by $T(g) g^{-1}$, and further $R + S = P + Q + R + S = 1$. Last, we square both sides of $T(g) = (-R + S)g$ to reach that $f = 0$.

\beu \ba
T(g^2) &= (-R + S)^2 g^2\\
T(\alpha + f) &= (R + S)(\alpha + f) \\
\alpha - f &= \alpha + f \\
f &= 0
\ea \eeu

\end{proof}

\section{Natsume-Olsen Spheres}\label{sec:setup}

The Gelfand-Naimark Theorem (\cite{do98}, Theorem 4.29) states that any commutative $C^*$-algebra $A$ with unit can be written uniquely as $C(X)$ for some compact Hausdorff space $X$. Moreover, this relationship forms a contravariant functor: continuous maps $X \to Y$ correspond to unital $*$-homomorphisms $C(Y) \to C(X)$. So, when one discusses a noncommutative topological space, such as the noncommutative torus or a noncommutative sphere, one means some noncommutative $C^*$-algebra which shares many relevant properties of $C(X)$ for that choice of $X$. A prototypical example of this is the noncommutative $n$-torus $A_\theta$, where $\theta$ is an $n \times n$ antisymmetric matrix of real numbers, as used in Proposition \ref{prop:2torus}. When $\theta$ is an integer matrix, $U_1, \ldots, U_n \in A_\theta$ are just commuting unitaries, and $A_\theta$ is equal to $C(\T^n)$. As in \cite{na97}, the noncommutative torus may be defined slightly differently as $A_\rho$ with a coordinate change $\rho_{jk} = e^{2 \pi i \theta_{jk}}$, so that $U_k U_j = \rho_{jk} U_j U_k$. From now on, we will use this convention of $\rho$-coordinates when discussing tori and spheres. To avoid restating the properties of $\rho$, we give the following definition. 

\begin{defn}
An $n \times n$ matrix $\rho$ is called a \textit{parameter matrix} if $\rho$ is self-adjoint, all entries of $\rho$ are unimodular, and $\rho_{ii} = 1$ for all $1 \leq i \leq n$. Such matrices may be written (nonuniquely) as $\rho_{jk} = e^{2 \pi i \theta_{jk}}$ where $\theta$ is real and antisymmetric.
\end{defn}

Any quantum torus $A_\rho$ admits a $\Z_2$ action defined from the following unital $*$-homomorphism $T$, which generalizes the antipodal map.

\beu
T: A_\rho \to A_\rho
\eeu
\beu
U_j \mapsto -U_j
\eeu

\noindent This map satsifies $T^2 = 1$, and $T$ exists because $-U_1, \ldots, -U_n$ satisfy the relations defining $A_\rho$; they are unitary and $(-U_k)(-U_j) = \rho_{jk} (-U_j)(-U_k)$. Note that this $\Z_2$ action is the same as the action obtained through quantization, as in the discussion before Proposition \ref{prop:2torus}. This is not surprising since the antipodal map on $C(\T^n)$ is seen in the $\R^n$ action defining the quantization, namely as the action of a finite subgroup of $\R^n / \Z^n$.  Much of the same structure is present in the noncommutative spheres of Natsume and Olsen in \cite{na97}, which are also defined by generators and relations, or equivalently by deformation quantization.

\begin{defn} If $\rho$ is an $n \times n$ parameter matrix, the Natsume-Olsen odd sphere $C(\S^{2n - 1}_\rho)$ is the universal, unital $C^*$-algebra generated by elements $z_1, \ldots, z_n$ subject to the following relations.
\be\label{normal} z_j z_j^* = z_j^* z_j \ee
\be\label{NCgen} z_k z_j = \rho_{jk}z_j z_k \ee
\be\label{SS} z_1 z_1^* + z_2 z_2^* + \ldots + z_n z_n^* = 1 \ee
\end{defn}
A nontrivial result in \cite{na97} gives a noncommutativity relation for $z_j$ and $z_k^*$ as a consequence of the above definition.
\be \label{NCadj}
z_k^* z_j = \overline{\rho_{jk}} z_j z_k^* = \rho_{kj} z_j z_k^*
\ee

Again, when $\rho$ contains $1$ in every entry, the commutative sphere $C(\S^{2n - 1})$ is recovered, with complex coordinates $z_1, \ldots, z_n$ from the embedding $\S^{2n - 1} \hookrightarrow \C^n$. Further, the noncommutative sphere can be represented as a function algebra into the torus $A_\rho$.

\begin{thm}\label{thm:torusfun}(\cite{na97}, Theorem 2.5) Let $\S^{n-1}_+ = \{\vec{t} = (t_1, \ldots, t_n): 0 \leq t_i \leq 1, t_1^2 + \ldots + t_n^2 = 1\}$. Then $C(\S^{2n - 1}_\rho)$ is isomorphic to the $C^*$-algebra of continuous functions $f: \S^{n-1}_+ \to A_\rho$ which satisfy the condition that whenever $t_i = 0$, $f(\vec{t}\,) \in C^*(U_1, \ldots, U_{i - 1}, U_{i + 1}, \ldots U_n)$.
\end{thm}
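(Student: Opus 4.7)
The plan is to realize the isomorphism as the bundle of evaluations $\Phi\colon a \mapsto (\vec{t}\mapsto \phi_{\vec{t}}(a))$, where $\phi_{\vec{t}}\colon C(\S^{2n-1}_\rho)\to A_\rho$ is defined on generators by $z_j\mapsto t_j U_j$ for $\vec{t}\in\S^{n-1}_+$. A routine check---the elements $t_j U_j$ are normal, satisfy the same $\rho$-commutation relations as the $z_j$, and satisfy $\sum_j(t_j U_j)(t_j U_j)^*=\sum_j t_j^2=1$---shows that each $\phi_{\vec{t}}$ exists by the universal property, and that its image automatically lies in $C^*(\{U_i:t_i\neq 0\})$, so $\Phi$ lands in the function subalgebra $B$ of the statement. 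Continuity in $\vec{t}$ is clear on $*$-polynomials in the generators and extends to all of $C(\S^{2n-1}_\rho)$ by the universal norm bound.

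Surjectivity onto $B$ would follow from a gauge-theoretic argument. The action $\gamma_\lambda(z_j)=\lambda_j z_j$ is a well-defined $\T^n$-action on $C(\S^{2n-1}_\rho)$ by universality, and $\Phi$ intertwines it with the fiberwise action $U_j\mapsto\lambda_j U_j$ on $B$. Fejér-style averaging against this action reduces density of the image to density on each spectral subspace, and the spectral subspace of $B$ for a character $\alpha\in\Z^n$ is a $C(\S^{n-1}_+)$-module generated by the single section $\vec{t}\mapsto\prod_j t_j^{|\alpha_j|}\,\prod_j U_j^{\alpha_j}$ (reading $U_j^{\alpha_j}=(U_j^*)^{|\alpha_j|}$ for $\alpha_j<0$), which lies in $\Phi(C(\S^{2n-1}_\rho))$ as the image of the corresponding monomial in the $z_j, z_j^*$. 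The remaining content is that $\Phi$ hits all of $C(\S^{n-1}_+)$ on the fixed-point subalgebra, which is a Stone-Weierstrass consequence of the separating generators $\Phi(|z_j|^2)(\vec{t})=t_j^2$ on $\S^{n-1}_+$.

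Injectivity of $\Phi$ is the main obstacle, and I would attack it by the standard gauge-invariant uniqueness technique. First, I would verify that the fixed-point subalgebra $C(\S^{2n-1}_\rho)^{\T^n}$ is generated by the commuting positives $|z_j|^2$: every gauge-invariant $*$-monomial must have matched exponents on each $z_j$ and $z_j^*$, and the $\rho$-commutation relations rearrange such a monomial into a unimodular scalar times a product of $|z_j|^2$'s. Because $\sum_j|z_j|^2=1$, the joint spectrum of this commuting family is the positive simplex, identified with $\S^{n-1}_+$ via $s_j=t_j^2$, so continuous functional calculus gives a canonical isomorphism from the fixed-point subalgebra onto $C(\S^{n-1}_+)$, under which $\Phi$ becomes the identity and is therefore injective on this subalgebra. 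Averaging against normalized Haar measure on $\T^n$ then defines a conditional expectation $E\colon C(\S^{2n-1}_\rho)\to C(\S^{n-1}_+)$ that is faithful on positive elements (by strong continuity of $\gamma$) and that $\Phi$ intertwines with the analogous expectation on $B$; if $\Phi(a)=0$ then $\Phi(E(a^*a))=0$, so injectivity on the fixed-point algebra gives $E(a^*a)=0$, and faithfulness of $E$ yields $a=0$. The technical heart of the argument is the identification of the fixed-point subalgebra with $C(\S^{n-1}_+)$; once this is in place, both surjectivity and injectivity follow from well-established gauge-theoretic machinery.
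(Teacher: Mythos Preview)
The paper does not supply its own proof of this result: it is quoted with attribution to Natsume and Olsen (\cite{na97}, Theorem 2.5) and used as a black box thereafter. There is therefore no proof in the present paper to compare your argument against.

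That said, your gauge-action strategy is a correct and standard route to such an isomorphism. Two small points deserve tightening. First, when you say each spectral subspace of $B$ is a $C(\S^{n-1}_+)$-module \emph{generated} by the monomial section, this holds only after closure: for instance, $\vec t\mapsto t_1^{1/2}U_1$ lies in the $\alpha=e_1$ spectral subspace but is not a continuous multiple of $\vec t\mapsto t_1 U_1$. Since you only need density, and the image of a $*$-homomorphism between $C^*$-algebras is automatically closed, this does no damage, but the word should be ``topologically generated''. Second, your claim that the joint spectrum of the commuting family $\{|z_j|^2\}$ is the \emph{entire} simplex, rather than merely contained in it, requires justification; it follows because every point $(t_1^2,\ldots,t_n^2)$ of the simplex is realized by the representation $\phi_{\vec t}$ you already constructed in the first paragraph, but you should make that connection explicit rather than inferring it from the relation $\sum_j|z_j|^2=1$ alone.
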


This theorem is akin to writing complex coordinates in polar form $z_i = t_i u_i$ and seeing a function on the unimodular coordinates $u_i$ whenever the radial coordinates $t_i$ are fixed. Moreover, when some radial coordinate is 0, the corresponding unimodular coordinate should be irrelevant. Now, the norm on the function algebra (with operations defined pointwise) is the unique $C^*$-norm, $||f|| = \max\limits_{\vec{t} \in \S^{n-1}_+} ||f(\vec{t}\,)||_{A_\rho}$. Also, the generators $z_i$ take a simple form.
\beu
z_i(\vec{t}\,) = t_i U_i
\eeu

An enormous advantage of this formulation is that since every element of $C(\S^{2n - 1}_\rho)$ is a function on a compact space, we see the various topological joys of compact spaces (bump functions, partitions of unity, and so on) without having to pass to commutative subalgebras. Moreover, unitaries in $C(\S^{2n - 1}_\rho)$ are paths of unitaries in $A_\rho$, a well-studied object, and any element $f$ of $C(\S^{2n - 1}_\rho)$ has the property that $f(1, 0, \ldots, 0)$ belongs to the \textit{commutative} $C^*$-algebra $C^*(U_1) \cong C(\S^1)$ (and similarly for other $U_i$)! This algebra of functions also behaves well under a generalized antipodal map. In the commutative sphere $C(\S^{2n -1})$, the coordinate functions $z_i$ are odd, so in $C(\S^{2n - 1}_\rho)$ we demand that the $z_i$ be odd elements as well. This gives an isomorphism $T$ on $C(\S^{2n - 1}_\rho)$ of order $2$, or rather, a nontrivial $\Z_2$ action.
\be\label{action}
T: C(\S^{2n - 1}_\rho) \to C(\S^{2n - 1}_\rho)
\ee
\beu
z_i \mapsto -z_i
\eeu

This $\Z_2$ action on $C(\S^{2n - 1}_\rho)$ is equivalent to the pointwise action on $A_\rho$, as verified by checking on the generators, so there is no harm in also calling this map $T$. When the parameter matrix $\rho$ is relevant we will replace $T$ by $T_\rho$, and we may also apply $T_\rho$ on matrix algebras over $C(\S^{2n - 1}_\rho)$ entrywise.

\begin{prop}\label{prop:one}
Suppose $F$ is a matrix in $M_{2k - 1}(C(\S^{2n - 1}_\rho))$, $2n - 1 \geq 3$. Then $F$ cannot be both invertible and odd (i.e., odd in every entry).
\end{prop}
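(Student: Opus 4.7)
The plan is to reduce to finding an odd, invertible, odd-dimensional matrix over the commutative sphere $C(\S^{2n-1})$ and then derive a contradiction from the classical Borsuk-Ulam theorem via a determinant. Suppose toward a contradiction that $F \in M_{2k-1}(C(\S^{2n-1}_\rho))$ is odd and invertible.

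First I would reduce to a rational parameter matrix. The Natsume-Olsen spheres arise from Rieffel's deformation quantization of $C(\S^{2n-1})$ along a $\T^n$ action, so (just as in the torus setting used in the proof of Proposition \ref{prop:2torus}) they share a common dense smooth subalgebra with continuity of norm and product in $\rho$. Approximating each entry of $F$ by a smooth odd element and then perturbing $\rho$ slightly, I obtain an odd invertible matrix $G \in M_{2k-1}(C(\S^{2n-1}_{\rho'}))$ where each $\rho'_{jk} = e^{2\pi i \psi_{jk}}$ with $\psi_{jk} \in \Q$ of odd denominator.

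Next I would construct a $\Z_2$-equivariant $*$-homomorphism $E \colon C(\S^{2n-1}_{\rho'}) \to M_q(C(\S^{2n-1}))$ for some odd integer $q$. Exactly as in the proof of Proposition \ref{prop:2torus}, an inductive argument produces unitaries $V_1, \ldots, V_n \in M_q(\C)$ with $V_k V_j = \rho'_{jk} V_j V_k$ and $q$ odd. Writing $\zeta_1, \ldots, \zeta_n$ for the standard complex coordinates on $\S^{2n-1} \subset \C^n$, the elements $V_i \zeta_i \in M_q(C(\S^{2n-1}))$ satisfy the defining relations of $C(\S^{2n-1}_{\rho'})$: normality is immediate, $(V_k \zeta_k)(V_j \zeta_j) = \rho'_{jk}(V_j \zeta_j)(V_k \zeta_k)$, and $\sum_i V_i \zeta_i (V_i \zeta_i)^* = \left(\sum_i |\zeta_i|^2\right) I = I$. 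The universal property delivers $E$, and since the antipodal map on $C(\S^{2n-1})$ sends $\zeta_i \mapsto -\zeta_i$, the map $E$ intertwines the two $\Z_2$ actions.

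Finally, applying $E$ entrywise to $G$ yields an invertible matrix $E(G) \in M_{(2k-1)q}(C(\S^{2n-1}))$ of odd dimension $(2k-1)q$ with odd entries. The pointwise determinant gives a continuous function $\det E(G) \colon \S^{2n-1} \to \C \setminus \{0\}$ that is odd (because $(2k-1)q$ is odd), and normalizing produces an odd continuous map $\S^{2n-1} \to \S^1$; this contradicts Theorem \ref{thm:introBU}(3) since $2n - 1 \geq 3$. The main obstacle is the deformation step: one must rigorously establish that Natsume-Olsen spheres form a continuous field (analogous to the norm and product continuity used for the torus in Proposition \ref{prop:2torus}) so that smooth odd matrices remain invertible under small perturbations of $\rho$. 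Once that is granted, the rest of the argument parallels Proposition \ref{prop:2torus}, with a determinant computation on an odd-dimensional sphere replacing the one on $\T^n$.
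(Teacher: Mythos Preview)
Your argument is correct, but it follows a genuinely different route from the paper's. The paper exploits the function-algebra realization of $C(\S^{2n-1}_\rho)$ (Theorem~\ref{thm:torusfun}) directly: evaluating $F$ at the vertices $(1,0,\ldots,0)$ and $(0,1,0,\ldots,0)$ of $\S^{n-1}_+$ lands in the commutative subalgebras $C^*(U_1)\cong C(\S^1)$ and $C^*(U_2)\cong C(\S^1)$, where the determinant and the one-dimensional Borsuk-Ulam theorem force the $K_1$ classes to be $U_1^a$ and $U_2^b$ with $a,b$ odd; a path joining the two vertices inside the face $\{r_i=0:i\geq 3\}$ then forces these classes to coincide in $K_1$ of the quantum $2$-torus $C^*(U_1,U_2)$, contradicting that $U_1^a$ and $U_2^b$ are inequivalent there. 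Your approach instead mirrors the torus argument of Proposition~\ref{prop:2torus}: perturb to a rational parameter, build an equivariant map into $M_q(C(\S^{2n-1}))$ with $q$ odd, and take a determinant on the commutative sphere. The paper's proof is shorter and works uniformly for every $\rho$ with no approximation, at the cost of invoking $K_1$ of the quantum $2$-torus; your proof is more self-contained at the endgame (a single determinant and the classical Borsuk-Ulam), but it front-loads the continuous-field/deformation machinery for the spheres, which---as you note---is available via Rieffel's framework (see the discussion around \cite{ri93} and \cite{sa11} later in the paper) but is heavier than what the paper actually needs here.
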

\begin{proof}
Suppose $F$ is invertible and odd. Then for $\vec{t} = (1, 0, \ldots, 0)$, $F(\vec{t}\,)$ is a matrix of odd dimension over $C^*(U_1) \cong C(\S^1)$ such that each entry is an odd function. Its $K_1$ class over $C(\S^1)$ is determined by $\det(F(\vec{t}\,))$, which is an odd, nowhere vanishing function $\S^1 \to \C$. By the Borsuk-Ulam theorem, this function has odd winding number, so $F(\vec{t}\,)$ is equivalent to $U_1^a$ in $K_1(C^*(U_1))$, where $a$ is odd, and certainly $F(\vec{t} \,)$ and $U_1^a$ are also equivalent in $K_1(C^*(U_1, U_2))$. Similarly, if $\vec{s} = (0, 1, 0, \ldots, 0)$, $F(\vec{s}\,)$ is equivalent to $U_2^b$ in $K_1(C^*(U_1, U_2))$, where $b$ is odd. However, there is also a path connecting $\vec{s}$ and $\vec{t}$ within $\{ \vec{r} \in \S^{n-1}_+:r_i = 0 \textrm{ for } i \geq 3\}$, so $F(\vec{s}\,)$ and $F(\vec{t}\,)$ are in the same component of invertibles over $C^*(U_1, U_2)$, which is isomorphic to a 2-dimensional quantum torus. This contradicts the fact that $U_1^a$ and $U_2^b$ are inequivalent in $K_1(C^*(U_1, U_2))$ when $a$ or $b$ is nonzero (see $\cite{pi80}$ for when the 2-torus $C^*(U_1, U_2)$ is given by an irrational rotation; the result on the rational torus follows from a homomorphism $C^*(U_1, U_2) \to M_p(C(\T^2))$ found, for example, in \cite{ma12}).
\end{proof}

\begin{prop}
There are no nontrivial projections in $C(\S^{2n - 1}_\rho)$.
\end{prop}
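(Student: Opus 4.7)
The plan is to use the function-algebra representation from Theorem \ref{thm:torusfun}. Suppose $P \in C(\S^{2n - 1}_\rho)$ is a projection, and identify $P$ with a continuous function $\vec{t} \mapsto P(\vec{t}\,) \in A_\rho$ on $\S^{n-1}_+$. Because every operation in the function algebra is pointwise, $P(\vec{t}\,)^2 = P(\vec{t}\,) = P(\vec{t}\,)^*$ for each $\vec{t}$, so each value is itself a projection in $A_\rho$. At the vertex $\vec{t}_0 = (1, 0, \ldots, 0)$, the boundary condition in Theorem \ref{thm:torusfun} forces $P(\vec{t}_0)$ to lie in $C^*(U_1) \cong C(\S^1)$. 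Since $\S^1$ is connected, $C(\S^1)$ has no nontrivial projections, so $P(\vec{t}_0) \in \{0, 1\}$. Replacing $P$ with $1 - P$ if necessary, we may assume $P(\vec{t}_0) = 0$.

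The next step is the standard observation that the projection $0$ is isolated among projections in any $C^*$-algebra. Indeed, if $q$ is any projection, then $q$ is self-adjoint idempotent, so $\sigma(q) \subset \{0, 1\}$, forcing $\|q\| \in \{0, 1\}$; any projection with $\|q\| < 1$ must therefore equal $0$. Consequently, the set $S = \{\vec{t} \in \S^{n - 1}_+ : P(\vec{t}\,) = 0\}$ is closed by continuity and open because $P$ is continuous into $A_\rho$ (any $\vec{t}$ with $P(\vec{t}\,) = 0$ has a neighborhood on which $\|P(\vec{t}\,)\|_{A_\rho} < 1$, and hence on which $P$ vanishes).

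Since $\S^{n-1}_+$ is connected and $\vec{t}_0 \in S$, we conclude $S = \S^{n - 1}_+$, so $P \equiv 0$ as a function, which means $P = 0$ in $C(\S^{2n - 1}_\rho)$. The symmetric case $P(\vec{t}_0) = 1$ gives $P = 1$. There is no substantive obstacle in this argument; the content is just the reduction to the commutative, connected corner $C^*(U_1)$ via Theorem \ref{thm:torusfun} combined with the local structure of the projection space in a $C^*$-algebra.
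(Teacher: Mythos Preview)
Your argument is correct and complete, but it takes a genuinely different route from the paper's proof. The paper argues via $K$-theory and the trace: Natsume and Olsen construct a faithful trace $\tau$ on $C(\S^{2n-1}_\rho)$, which descends to a map $K_0(C(\S^{2n-1}_\rho)) \to \R$; since $K_0$ is generated by $[1]$, the trace of any projection in a matrix algebra over $C(\S^{2n-1}_\rho)$ must be an integer, yet faithfulness forces a nontrivial projection in the base algebra to have trace strictly between $0$ and $1$.

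By contrast, you work entirely inside the function-algebra model of Theorem~\ref{thm:torusfun}, reduce to a commutative corner $C^*(U_1) \cong C(\S^1)$ at a vertex of $\S^{n-1}_+$, and then propagate the conclusion across the connected parameter space using the isolation of $0$ among projections. This is more elementary: it avoids both the $K_0$ computation and the construction of the trace, relying only on the boundary conditions in the function model and the connectedness of $\S^{n-1}_+$. The paper's argument, on the other hand, is more robust in that it uses tools (the trace, $K_0 \cong \Z$) already in play for other purposes in \cite{na97}, and it immediately yields information about projections in matrix algebras as well, whereas your argument is specific to the base algebra.
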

\begin{proof}
Natsume-Olsen spheres each admit a faithful, continuous trace $\tau$, developed in \cite{na97} by integrating the usual trace on $A_\rho$ over a Borel probability measure. We may extend $\tau$ as a linear map on $M_k(C(S^{2n - 1}))$ in the usual way by summing over the diagonal, and since this map is invariant under unitary conjugation, this allows us to view $\tau$ as a function on $K_0(C(\S^{2n - 1}_\rho))$. Now, $K_0(C(\S^{2n - 1}_\rho))$ is generated by the trivial projection $1$, so the only possible values of the trace on projections in $M_k(C(S^{2n - 1}_\rho))$ are integers. However, faithfulness implies that any nontrivial projection in $C(\S^{2n - 1}_\rho)$ must have trace in $(0, 1)$.
\end{proof}

Moreover, even though $C(\S^{2n - 1}_\rho)$ is often a noncommutative algebra, any element with a one-sided inverse always has a two-sided inverse. This is a property that distinguishes $M_n(\C)$ from $B(H)$ when $H$ is an infinite-dimensional Hilbert space, and in general a $C^*$-algebra $A$ is called \textit{finite} if whenever $x, y \in A$ have $xy = 1$, it follows that $yx = 1$. If $A$ also has the property that $M_k(A)$ is finite for all $k \in \Z^+$, then $A$ is called \textit{stably finite} (this does not follow from finiteness of $A$; see \cite{cl86}).

\begin{prop}\label{prop:side} The Natsume-Olsen spheres $C(\S^{2n - 1}_\rho)$ are all stably finite.
\end{prop}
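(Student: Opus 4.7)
The plan is to leverage the faithful, continuous trace $\tau$ produced in the preceding proposition. Extend $\tau$ entrywise to each matrix algebra $M_k(C(\S^{2n - 1}_\rho))$ by summing over the diagonal; the resulting functional is still a faithful tracial (linear) functional on $M_k(C(\S^{2n - 1}_\rho))$, with $\tau(1) = k \cdot \tau(1_{C(\S^{2n-1}_\rho)})$. Faithfulness here means that $\tau(a^*a) = 0$ forces $a = 0$, which is inherited from the scalar case because the diagonal entries of $a^*a$ are themselves sums of elements of the form $b^*b$ in $C(\S^{2n-1}_\rho)$.

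Suppose $x, y \in M_k(C(\S^{2n - 1}_\rho))$ satisfy $xy = 1$. I would then consider the element $p = 1 - yx$, which is an idempotent:
\beu
p^2 = 1 - 2yx + y(xy)x = 1 - 2yx + yx = 1 - yx = p.
\eeu
The tracial property immediately gives $\tau(p) = \tau(1) - \tau(yx) = \tau(1) - \tau(xy) = 0$.

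The remaining step is to replace the idempotent $p$ with an honest self-adjoint projection so that faithfulness of $\tau$ can be invoked. This uses the standard fact that every idempotent $p$ in a unital $C^*$-algebra is similar to a projection $q$: set $h = 1 + (p - p^*)^*(p - p^*)$, which is positive and invertible since $(p - p^*)^* = -(p - p^*)$, and then $q = p p^* h^{-1}$ turns out to be a self-adjoint projection with $p = u q u^{-1}$ for a suitable invertible $u$ built from $h$ and $p$. By the tracial property, $\tau(q) = \tau(u^{-1} u q) = \tau(p) = 0$. Since $q = q^*q$ and $\tau$ is faithful, $q = 0$, so $p = 0$ and $yx = 1$. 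As this works for arbitrary $k$, every matrix algebra over $C(\S^{2n - 1}_\rho)$ is finite.

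The only real obstacle is verifying that the trace behaves well on idempotents: one needs to know that an idempotent of trace zero in a $C^*$-algebra is zero, which is not directly automatic from positive faithfulness of $\tau$ because an idempotent need not be positive. This is precisely what the similarity-to-a-projection argument repairs, and it is a standard maneuver in the theory of stably finite $C^*$-algebras, so no new technology is needed beyond Natsume and Olsen's faithful trace.
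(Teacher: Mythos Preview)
Your argument is correct and follows exactly the route the paper takes: both appeal to the faithful trace on $C(\S^{2n-1}_\rho)$ constructed by Natsume and Olsen, and you have simply written out the standard implication ``faithful trace $\Rightarrow$ stably finite'' that the paper leaves as a one-line remark. The similarity-to-a-projection step is the usual maneuver (one could alternatively polar-decompose $x$ after noting $xx^* \geq \|y\|^{-2}$ from $xy=1$, producing a coisometry directly), but either way no idea beyond the faithful trace is required.
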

\begin{proof}
This is immediate, as $C(\S^{2n - 1}_\rho)$ has a faithful trace.
\end{proof}

\begin{cor}\label{cor:nooddinv}
If $2n - 1 \geq 3$ and $w \in C(\S^{2n - 1}_\rho)$ is odd, then $w w^*$ is not invertible.
\end{cor}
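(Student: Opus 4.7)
The plan is to argue by contradiction using the two propositions that immediately precede this corollary. Suppose $ww^*$ is invertible in $C(\S^{2n-1}_\rho)$. Then $w^*(ww^*)^{-1}$ is an explicit right inverse for $w$, since $w \cdot w^*(ww^*)^{-1} = (ww^*)(ww^*)^{-1} = 1$.

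At this point, Proposition \ref{prop:side} becomes the key lever: the Natsume-Olsen spheres are stably finite, so in particular $C(\S^{2n-1}_\rho)$ itself is finite. Thus any one-sided inverse is automatically a two-sided inverse, and so $w$ is in fact invertible in $C(\S^{2n-1}_\rho)$.

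Now we view $w$ as a $1 \times 1$ matrix over $C(\S^{2n-1}_\rho)$. It is odd (in every entry, trivially) and invertible, with matrix dimension $1 = 2(1)-1$, which is an odd positive integer. Since $2n - 1 \geq 3$ by hypothesis, Proposition \ref{prop:one} applies and rules out the existence of such an invertible odd matrix of odd dimension. This contradiction shows $ww^*$ cannot be invertible. The only nontrivial step here is the reduction from invertibility of $ww^*$ to invertibility of $w$, which is handled cleanly by stable finiteness; the rest is just quoting Proposition \ref{prop:one} in the degenerate matrix dimension $k=1$.
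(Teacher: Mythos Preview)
Your proof is correct and follows essentially the same approach as the paper: from invertibility of $ww^*$ you produce a right inverse for $w$, invoke stable finiteness (Proposition \ref{prop:side}) to make $w$ invertible, and then apply Proposition \ref{prop:one} with $2k-1=1$. The paper's proof is identical in structure, only more terse.
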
 
\begin{proof}
If $w$ is odd and $w w^*$ is invertible, the previous proposition implies that $w$ is invertible. This contradicts Proposition \ref{prop:one} for $2k - 1 = 1$.
\end{proof}

In the commutative case, there is no odd, nowhere vanishing function $F: \S^3 \to \R^3$. By identifying $\R^3 \cong \C \oplus \R$, we see that if $w, x \in C(\S^3)$ are odd and $x$ is self-adjoint, $|x|^2 + |w|^2 = x^2 + w w^*$ cannot be invertible. The above corollary makes a somewhat similar claim in the noncommutative sphere when $2n - 1 = 3$, but it is missing the self-adjoint odd element $x$. Further, when we try to rewrite the claim that there is no odd, nowhere vanishing $F: \S^{2n - 1} \to \R^{2n - 1}$ into a conjecture on elements of $C(\S^{2n - 1}_\rho)$, there is an abundance of ambiguity. This comes from the fact that if $s$ and $t$ are self-adjoint, $s^2 + t^2 = (s + it)(s - it) = (s + it)(s + it)^*$ only when $s$ and $t$ commute, which is the same as insisting $s + it$ is normal. The distinction means that the identifications  $\R^{2n - 1} \cong \R \oplus \bigoplus\limits_{i = 1}^{n - 1} \C$ and  $\R^{2n - 1} \cong \bigoplus\limits_{i = 1}^{2n - 1} \R$ lead to at least two separate questions on the noncommutative sphere.

\begin{ques}\label{ques:nonesa}
If $x \in C(\S^{2n - 1}_\rho)$ is odd and self-adjoint, and $w_1, \ldots, w_{n - 1} \in C(\S^{2n - 1}_\rho)$ are odd, must $x^2 + w_1 w_1^* + \ldots + w_{n - 1} w_{n - 1}^*$ fail to be invertible?
\end{ques}

\begin{ques}\label{ques:allsa}
If $f_1, \ldots, f_{2n - 1} \in C(\S^{2n - 1}_\rho)$ are odd and self-adjoint, must $f_1^2 + \ldots + f_{2n - 1}^2$ fail to be invertible?
\end{ques}

The second of these questions was posed by Taghavi in \cite{ta12}, as a general question about no particular family of noncommutative spheres. There are also similar questions formed by replacing some, but not all, of the expressions $w_i w_i^*$ with the square sum of two self-adjoint elements. However, regardless of formulation, the answer to each question is no.

\begin{thm} If $C(\S^{2n - 1}_\rho)$ is noncommutative, then Questions \ref{ques:nonesa} and \ref{ques:allsa}, and all intermediate versions, have a negative answer. \end{thm}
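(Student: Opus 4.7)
\vspace{.05 in}

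The plan is constructive: I will exhibit explicit counterexamples under each formulation when $\rho$ is noncommutative. Since noncommutativity forces $\rho_{jk} \not= 1$ for some $j,k$, I relabel generators so that $\rho_{12} = \lambda \not= 1$. Throughout I work in the function-algebra realization of Theorem \ref{thm:torusfun}: an element of $C(\S^{2n-1}_\rho)$ is invertible iff it is pointwise invertible in $A_\rho$ over $\S^{n-1}_+$, and the identities $z_i(\vec{t}\,) = t_i U_i$ and $(z_i z_i^*)(\vec{t}\,) = t_i^2 \cdot 1_{A_\rho}$ reduce the problem to concrete $A_\rho$-valued norm estimates.

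For Question \ref{ques:allsa}, I propose the $2n-1$ odd self-adjoint elements
\beu
f_1 = z_1 + z_1^* + z_2 + z_2^*, \qquad f_2 = i(z_1 - z_1^*), \qquad f_3 = i(z_2 - z_2^*),
\eeu
supplemented when $n \geq 3$ by the pairs $f_{2k-2} = z_k + z_k^*$ and $f_{2k-1} = i(z_k - z_k^*)$ for $k = 3, \ldots, n$. Expanding with normality of each $z_i$, the commutation $z_2 z_1 = \lambda z_1 z_2$ (and its conjugate/adjoint forms), and the sphere relation $\sum_i z_i z_i^* = 1$, I expect the squares to telescope to
\beu
\sum_{i=1}^{2n-1} f_i^2 = 4 + C, \qquad C := (1+\lambda)(z_1 z_2 + z_1^* z_2^*) + (1 + \bar\lambda)(z_1 z_2^* + z_1^* z_2).
\eeu
The correction $C$ is self-adjoint, and at each $\vec{t}\,$ obeys $\|C(\vec{t}\,)\| \leq 4 t_1 t_2 |1+\lambda|$. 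Combining $2 t_1 t_2 \leq t_1^2 + t_2^2 \leq 1$ with $|1+\lambda| < 2$ (strict because $\lambda \not= 1$) yields $\|C\| \leq 2|1+\lambda| < 4$ pointwise, so $4 + C$ is invertible in $A_\rho$ everywhere on $\S^{n-1}_+$.

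For Question \ref{ques:nonesa}, set $x = z_1 + z_1^* + z_2 + z_2^*$, $w_1 = i(z_1 - z_1^*) + i(z_2 - z_2^*)$, and $w_k = z_{k+1}$ for $k = 2, \ldots, n-1$. A parallel expansion should produce an expression of the form $3 + (z_1 z_1^* + z_2 z_2^*) + 2(1+\bar\lambda)(z_1 z_2^* + z_1^* z_2)$, in which the positive scalar part is at least $3$ while the operator-norm of the $\lambda$-dependent correction is at most $2|1+\lambda|(t_1^2 + t_2^2) \leq 2|1+\lambda| < 4$. A direct comparison then gives pointwise invertibility for each $\lambda \not= 1$. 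The intermediate formulations follow by swapping any summand $w_k w_k^* = z_{k+1} z_{k+1}^*$ with the pair $f_k^2 + g_k^2$ where $f_k = z_{k+1} + z_{k+1}^*$ and $g_k = i(z_{k+1} - z_{k+1}^*)$; since $f_k^2 + g_k^2 = 4 z_{k+1} z_{k+1}^*$, the total expression changes only by a positive scalar factor, and the estimate is preserved.

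The main obstacle I anticipate is invertibility at the boundary points of $\S^{n-1}_+$, where all but one coordinate vanishes and $C(\S^{2n-1}_\rho)$ collapses to $C^*(U_i) \cong C(\S^1)$. Classical Borsuk-Ulam forbids a single odd self-adjoint function on $\S^1$ from being invertible, so any naive attempt using a lone $z_i + z_i^*$ or a lone $i(z_i - z_i^*)$ will fail at $(0,\ldots,0,1,0,\ldots,0)$. The construction evades this because it packages \emph{both} the real part $z_i + z_i^*$ and the imaginary part $i(z_i - z_i^*)$ of each coordinate as separate summands in the sum of squares, yielding the identity $(U_i + U_i^*)^2 + (i(U_i - U_i^*))^2 = 4$ on each $C^*(U_i)$ slice. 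This is exactly the $\sin^2\theta + \cos^2\theta = 1$ mechanism that lets $\S^1 \to \mathbb{R}^2$ avoid the origin in the commutative setting, adapted here to the noncommutative boundary of the Natsume-Olsen sphere.
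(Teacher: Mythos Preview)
Your approach is the same core idea as the paper's: pick two non-commuting generators and exploit $|1+\lambda| < 2$ to bound a cross-term correction. The paper's execution is tidier, though. Writing $z_m = x_m + iy_m$ with $x_m, y_m$ self-adjoint, the paper uses the \emph{single} element
\[
(x_1+x_2)^2 + (y_1+y_2)^2 + \sum_{m \geq 3} z_m z_m^*,
\]
which simplifies exactly to $1 + \frac{1+\bar\lambda}{2}(z_1 z_2^* + z_1^* z_2)$, with correction norm $\frac{|1+\lambda|}{2}\cdot\frac{1}{2}\cdot 2 < 1$. Because $z_m z_m^* = x_m^2 + y_m^2$ exactly (normality), this \emph{same} element is already in the form of Question~\ref{ques:nonesa}, Question~\ref{ques:allsa} (with $f_{2n-1}=0$), and every intermediate version --- no separate constructions or swapping needed. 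Your two constructions and the ``swapping'' step for intermediate cases are unnecessary extra work (and ``changes only by a positive scalar factor'' is not what happens when you replace $z_{k+1}z_{k+1}^*$ by $4z_{k+1}z_{k+1}^*$; the sum changes by adding a positive element, which does preserve invertibility of a positive element, but say that).

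There are two slips in your Question~\ref{ques:nonesa} paragraph. First, the expansion is wrong: with your $x$ and $w_1$ you get $x^2 + w_1 w_1^* = 4\big[(x_1+x_2)^2 + (y_1+y_2)^2\big]$, and after adding $\sum_{k\geq 3} z_k z_k^*$ and using the sphere relation the result is
\[
1 + 3(z_1 z_1^* + z_2 z_2^*) + 2(1+\bar\lambda)(z_1 z_2^* + z_1^* z_2),
\]
not $3 + (z_1 z_1^* + z_2 z_2^*) + \cdots$. Second, even granting your expression, the comparison ``scalar part $\geq 3$, correction $< 4$'' does not yield invertibility, since $3 < 4$. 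What saves you is the pointwise comparison you hint at: with $s = t_1^2+t_2^2 \in [0,1]$, the scalar part is $1+3s$ (or $3+s$ under your miscomputation) and the correction is at most $2|1+\lambda|\,s$; linearity in $s$ together with the endpoint checks $1>0$ and $4 > 2|1+\lambda|$ gives the strict inequality for every $s$. State that argument rather than the global bounds. Your Question~\ref{ques:allsa} computation is correct as written.
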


\begin{proof}
Decompose the generators as $z_m = x_m + i y_m$ where $x_m$ and $y_m$ are self-adjoint, and pick two generators $z_j$ and $z_k$ which do not commute. Since each $z_m$ is normal, $z_m z_m^* = x_m^2 + y_m^2$, so consider the following sum.

\be \label{thesum}
 (x_j + x_k)^2 + (y_j + y_k)^2 + \sum_{m \not\in \{j, k\}} z_m z_m^*
\ee

\noindent The $n - 2$ elements $z_m$ present in the sum are odd, and both $x_j + x_k$ and $y_j + y_k$ are self-adjoint and odd. So, (\ref{thesum}) is of the form in Question \ref{ques:nonesa}, where $w_1$ is actually self-adjoint. After replacement of every term $z_m z_m^*$ with $x_m^2 + y_m^2$, as $z_m$ is normal, (\ref{thesum}) becomes the square sum of $2 + 2(n - 2) = 2n - 2$ odd self-adjoint elements, so it can be written in the form of Question \ref{ques:allsa} where $f_{2n - 1} = 0$. For intermediate versions of the two questions, expand some (but not all) of the terms $z_m z_m^*$. To see that (\ref{thesum}) is invertible, we first rewrite $(x_ j + x_k)^2 + (y_j + y_k)^2$.

\beu \ba
	(x_j + x_k)^2 + (y_j + y_k)^2 	&= x_j^2 + x_k^2 + x_jx_k + x_k x_j + y_j^2 + y_k^2 + y_j y_k + y_k y_j	\\
						&= x_j^2 + y_j^2 + x_k^2 + y_k^2 + (x_jx_k + x_k x_j + y_j y_k + y_k y_j) \\
						&= z_j z_j^* + z_k z_k^* + (x_jx_k + x_k x_j + y_j y_k + y_k y_j) \\
\ea \eeu

\noindent This gives a simpler form for the original sum (\ref{thesum}).
\beu \ba
(x_j + x_k)^2 + (y_j + y_k)^2 + \sum_{m \not\in \{j, k\}} z_m z_m^* 	&=  (x_jx_k + x_k x_j + y_j y_k + y_k y_j) + \sum_{m = 1}^n z_m z_m^* \\
											&= (x_jx_k + x_k x_j + y_j y_k + y_k y_j) + 1
\ea \eeu

 It suffices to show $||x_jx_k + x_k x_j + y_j y_k + y_k y_j|| < 1$. First, we rewrite the components $x_j, y_j, x_k, y_k$ in terms of $z_j$ and $z_k$ via $x_m = \cfrac{z_m + z_m^*}{2}$ and $y_m = \cfrac{z_m -  z_m^*}{2i}$. Then we rearrange terms and apply the adjoint noncommutativity relation (\ref{NCadj}).

\beu \ba
x_jx_k + x_k x_j + y_j y_k + y_k y_j 	&= \frac{1}{2}[z_j z_k^* + z_j^* z_k + z_k^* z_j + z_k z_j^*] \\
						&= \frac{1}{2}[z_j z_k^* + z_j^*z_k + \rho_{kj}z_j z_k^* + \rho_{kj}z_j^* z_k] \\
						&= \frac{1 + \rho_{kj}}{2}[z_jz_k^* + z_j^*z_k]
\ea \eeu

\noindent Next, we calculate the norm of this element by viewing it as a function from $\S^{n - 1}_+$ to the noncommutative torus $A_\rho$, as in Theorem \ref{thm:torusfun}, where $z_i(\vec{t}\,) = t_i U_i$.

\beu \ba
\left|\left|\frac{1 + \rho_{kj}}{2}[z_jz_k^* + z_j^*z_k] \right|\right| &= \cfrac{|1+ \rho_{kj}|}{2}\cdot \max_{\vec{t} \in \S^{n-1}_+}\left\{||(t_jU_j)(t_kU_k)^*  + (t_jU_j)^*(t_kU_k)||_{A_\rho} \right\}\\
					&= \cfrac{|1+ \rho_{kj}|}{2} \cdot \max_{\vec{t} \in \S^{n-1}_+}\{t_jt_k\} \cdot ||U_jU_k^*  + U_j^*U_k||_{A_\rho} \\
							&\leq \cfrac{|1+ \rho_{kj}|}{2} \cdot \cfrac{1}{2} \cdot 2 \\
							&< 1
\ea \eeu
At the last step, we have used that $\rho_{kj}$ is unimodular, but not equal to $1$, as $z_j$ and $z_k$ do not commute. Finally, the sum (\ref{thesum}) is invertible.
\end{proof}

The answers to Questions \ref{ques:nonesa} and \ref{ques:allsa} (and all questions in between) for noncommutative spheres were negative, and the proof above shows that the disconnect between commutative and noncommutative sphere is quite large. In the commutative $2n - 1$ sphere, no square sum of $2n - 1$ odd self-adjoint elements is invertible, but in a sphere where just one pair of generators fails to commute, we can form an invertible square sum using only $2n - 2$ odd self-adjoint elements. Further, when $2n - 1 = 3$, this invertible sum is of the form $s^2 + t^2$, even though $(s + it)(s + it)^*$ cannot be invertible by Corollary \ref{cor:nooddinv}. In other words, $s$ and $t$ will definitely not commute.

Another version of the Borsuk-Ulam theorem does generalize to the noncommutative case: if $f: \S^k \to \S^k$ is odd and continuous, then $f$ has odd degree. The degree of a self-map of the sphere is defined in terms of top (co)homology; since $H_k(\S^k, \Z) \cong \Z$, the induced map of $f$ on top homology can be written as $f_*: \Z \to \Z$. This homomorphism is multiplication by some integer, called the degree. This is the same number associated to the induced map $f^*$ on top cohomology $H^k(\S^k; \Q) \cong \Q$. When the sphere is of odd dimension $k = 2n - 1$, information about top cohomology is present in odd $K$-theory, $K_1(C(\S^{2n - 1})) \cong K^1(\S^{2n - 1}) \cong \Z$, and the odd Chern character (\cite{bl98}, Theorem 1.6.6) almost gives an isomorphism between odd $K$-theory and odd cohomology. More precisely, $\chi^1$ is an isomorphism between their rationalizations.

\beu
\chi^1: K^1(\S^{2n - 1}) \otimes_\Z \Q \to \bigoplus_{m \textrm{ odd}} H^m(\S^{2n - 1}; \Q) = H^{2n - 1}(\S^{2n - 1}; \Q)
\eeu

The domain group is $\Z \otimes_\Z \Q \cong \Q$, and the codomain group is also $\Q$. If $f: \S^{2n - 1} \to \S^{2n - 1}$ is continuous, the induced map on $K^1(\S^{2n - 1}) \cong \Z$ is also multiplication by some integer $a$, and the Chern character will help show this integer is the same as $b = \textrm{deg}(f)$. The Chern character is a natural transformation (see \cite{mi74}), meaning for continuous $f: \S^{2n - 1} \to \S^{2n - 1}$ we are given the following commutative diagram, which is repeated on the right with identification of each group with $\Q$.
\beu
\begin{tikzpicture}
  \matrix (m) [matrix of math nodes,row sep=3em,column sep=4em,minimum width=2em]
  {
     K^1(\S^{2n - 1}) \otimes_\Z \Q & H^{2n - 1}(\S^{2n - 1}; \Q) \\
     K^1(\S^{2n - 1}) \otimes_\Z \Q &  H^{2n - 1} (\S^{2n - 1}; \Q)\\};
  \path[-stealth]
    (m-1-1) edge node [left] {$f^* \otimes id$} (m-2-1)
            edge node [above] {$\chi^1$} (m-1-2)
    (m-2-1.east|-m-2-2) edge node [above] {$\chi^1$} (m-2-2)
    (m-1-2) edge node [right] {$f^*$} (m-2-2);
\end{tikzpicture}
\begin{tikzpicture}
  \matrix (m) [matrix of math nodes,row sep=3em,column sep=4em,minimum width=2em]
  {
    \Q & \Q \\
    \Q &  \Q\\};
  \path[-stealth]
    (m-1-1) edge node [left] {$\times a$} (m-2-1)
            edge node [above] {$\chi^1$} (m-1-2)
    (m-2-1.east|-m-2-2) edge node [above] {$\chi^1$} (m-2-2)
    (m-1-2) edge node [right] {$\times b$} (m-2-2);
\end{tikzpicture}
\eeu

On the right hand diagram, the isomorphisms $\chi^1$ on the top and bottom are the same, and we conclude that $a = b$, so the degree of a map on $\S^{2n - 1}$ is the same when defined in terms of $K^1(\S^{2n - 1}) \cong K_1(C(\S^{2n - 1}))$ instead of cohomology. This is pleasant news, as the noncommutative algebras $C(\S^{2n - 1}_\rho)$ have very accesible $K_1$ groups. (For discussion of how $K$-theory is useful in topological Borsuk-Ulam results, see \cite{li84}.) The only other ingredient in a noncommutative Borsuk-Ulam conjecture is the precise role of the $\Z_2$ action. To say that a function $\phi: \S^k \to \S^k$ is odd means that $\phi$ commutes with the antipodal map $\alpha(\vec{x}) = -\vec{x}$.
\beu
\phi \circ \alpha = \alpha \circ \phi
\eeu
If $T: C(\S^k) \to C(\S^k)$ denotes the algebraic antipodal map $g \mapsto g \circ \alpha$ and $\Phi: C(\S^k) \to C(\S^k)$ denotes the homomorphism $g \mapsto g \circ \phi$, then the above equation has an algebraic reformulation.
\beu
T \circ \Phi = \Phi \circ T
\eeu

In other words, odd maps correspond to homomorphisms which commute with $T$. There is no reason that the domain or codomain of $\phi$ must be $\S^k$, or that the domain and codomain must be the same. In general, if $X$ and $Y$ have $\Z_2$ actions denoted as $x \mapsto -x$, then $\phi: X \to Y$ is odd if and only if $\Phi: C(Y) \to C(X)$ satisfies $T_X \circ \Phi = \Phi \circ T_Y$, where $T_X$ and $T_Y$ are the algebraic antipodal maps. If $T_X \circ \Phi = \Phi \circ T_Y$ and $g \in C(Y)$ is even or odd, then $T_X(\Phi(g)) = \Phi(T_Y(g)) = \Phi(\pm g) = \pm \Phi(g)$, so $\Phi(g)$ is also even or odd, and $\Phi$ preserves homogeneity. Conversely, if $\Phi$ preserves homogeneity, the same calculation shows $T_X(\Phi(g)) = \Phi(T_Y(g))$ when $g$ is even or odd, and since the homogeneous subspaces span $C(Y)$, $T_X \circ \Phi = \Phi \circ T_Y$. 

Now that we have the seen the algebraic translations of all terminology surrounding the Borsuk-Ulam theorem, we are ready to formulate a question, which is closely related to Question 4 of \cite{ta12} for the grading given by the antipodal map.

\begin{ques}
Suppose $\Phi: C(\S^{2n - 1}_\rho) \to C(\S^{2n - 1}_\omega)$ is a unital $*$-homomorphism between two Natsume-Olsen spheres of the same dimension. If $\Phi \circ T_\rho = T_\omega \circ \Phi$, where $T_\omega$ and $T_\rho$ are as in (\ref{action}), must $\Phi$ induce a nontrivial map on $K_1$?
\end{ques}

As in the commutative case, $K_1(C(\S^{2n - 1}_\rho)) \cong \Z$. In \cite{na97}, Natsume and Olsen recursively define a $2^{n - 1} \times 2^{n - 1}$ matrix $Z(n)$ that generates $K_1(C(\S^{2n - 1}_\rho))$, which they use to show $K_1(C(\S^{2n - 1}_\rho))$ is completely described by a generalized Toeplitz operator structure. We will denote this $K_1$ generator by $Z_\rho(n)$, which is defined by the ambiguous recurrence relation

\beu
Z_\rho(1) = z_1
\eeu
\be\label{NCrecursive}
Z_\rho(k + 1) := \left[ \begin{array}{cc} Z_\rho(k) & z_{k + 1}D_1 \\ -z_{k + 1}^* D_2 & Z_\rho(k)^* \\ \end{array} \right]
\ee
where $D_1$ and $D_2$ are any diagonal matrices over $\C$ which make the resulting matrix satisfy $Z_\rho(k + 1)Z_\rho(k + 1)^* = Z_\rho(k + 1)^*Z_\rho(k + 1) = (z_1 z_1^* + \ldots z_{k + 1} z_{k + 1}^*)I$. Such a choice always exists, and no matter what choices are made, $Z_\rho(n)$ will generate $K_1(C(\S^{2n - 1}_\rho))$. 

It is not difficult to give a single, continuous choice of coefficients in the recursive step by using a different argument than in \cite{na97}, and we may even choose coefficients so that if $z_1, \ldots, z_m$ generate a $(2m - 1)$-sphere $C(\S^{2n - 1}_\rho)$ with the same noncommutativity relations as the first $m$ generators $z_1, \ldots, z_{m}$ of $C(S^{2n - 1}_\omega)$, $n > m$, then $Z_\rho(k) = Z_\omega(k)$ (as formal $*$-monomial matrices) for all $k$ between $1$ and $m$. First, we demand that any 3-sphere given by $z_2 z_1 = \rho_{12} z_1 z_2$ must have the following $K_1$ generator.

\beu
Z_\rho(2) = \left[ \begin{array}{cc} z_1 & z_2 \\ -\rho_{21} z_2^* & z_1^* \end{array}\right] = \left[ \begin{array}{cc} z_1 & z_2 \\ -\overline{\rho_{12}} z_2^* & z_1^* \end{array}\right]
\eeu

Together with the convention $Z_\rho(1) = z_1$, this makes a consistent, continuous choice for all $1$ and $3$-dimensional spheres, and that choice is compatible with extending a list of generators ($z_1$ alone) to form a larger sphere (in $z_1$ and $z_2$). Now, for induction we suppose we have achieved the same for all spheres of dimension up to $2(n -1) - 1 = 2n - 3$. If $\rho$ is an $n \times n$ parameter matrix, then let $\omega$ denote the minor from removing row and column $n - 1$, and form $\gamma$ by removing row and column $n$ from $\rho$. As formal $*$-monomial matrices, $Z_\omega(n - 2) = Z_\gamma(n - 2)$ by the inductive assumption, since $C(\S^{2n - 3}_\omega)$ and $C(\S^{2n - 3}_\gamma)$ have the same noncommutativity relations on the first $n - 2$ generators. By the inductive assumption again, there is a well-defined choice of $D_1$ and $D_2$ to form $Z_\omega(n - 1)$.

\beu Z_\omega(n - 1) = \left[ \begin{array}{cc} Z_\omega(n - 2) & z_nD_1 \\ \ -z_n^* D_2 & Z_\omega(n - 2)^* \end{array}\right] = \left[ \begin{array}{cc} Z_\gamma(n - 2) & z_nD_1 \\ \ -z_n^* D_2 & Z_\gamma(n - 2)^* \end{array}\right]
\eeu
If we define $Z_\rho(k) = Z_\gamma(k)$, $1 \leq k \leq n - 1$, and then choose

\beu \ba
Z_\rho (n) &= \left[ \begin{array}{cc} Z_\rho(n - 1) & z_n(D_1 \oplus \rho_{n-1,n} D_2^*)\\ \ -z_n^* (D_2 \oplus \rho_{n, n - 1}D_1^*) & Z_\rho(n - 1)^* \end{array}\right] \\
&= \left[ \begin{array}{cc} Z_\rho(n - 1) & z_n(D_1 \oplus \rho_{n-1,n} D_2^*)\\ \ -z_n^* (D_2 \oplus \overline{\rho_{n - 1, n}} D_1^*) & Z_\rho(n - 1)^* \end{array}\right]
\ea \eeu
we reach a continuous choice of $Z_\rho(n)$. The verification of this fact is tedious and not at all illuminating, so it is omitted. Regardless, we now specify $Z_\rho(n)$ as a single matrix function of $\rho$ and summarize the result in the following proposition.

\begin{prop}
If $\rho$ is an $n \times n$ parameter matrix and $1 \leq k \leq n$, then there is a formal $*$-monomial matrix $Z_\rho(k)$ (given recursively as above) of dimension $2^{k - 1} \times 2^{k - 1}$ whose coefficients vary continuously in $\rho$. This matrix satisfies $Z_\rho(k) Z_\rho(k)^* = Z_\rho(k)^* Z_\rho(k) = (z_1 z_1^* + \ldots z_k z_k^*)I$. If $\omega$ is another parameter matrix (perhaps of different dimension) whose upper left $k \times k$ submatrix agrees with that of $\rho$, then $Z_\omega(k) = Z_\rho(k)$ as a formal $*$-monomial matrix. Moreover, $Z_\rho(n)$ gives a generator of $K_1(C(\S^{2n - 1}_\rho)) \cong \Z$.
\end{prop}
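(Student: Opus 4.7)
The plan is to proceed by induction on $k$, establishing simultaneously the continuity of the coefficients in $\rho$, the compatibility with smaller parameter matrices, and the isometry identity $Z_\rho(k) Z_\rho(k)^* = Z_\rho(k)^* Z_\rho(k) = (z_1 z_1^* + \ldots + z_k z_k^*) I$. The final assertion that $Z_\rho(n)$ generates $K_1(C(\S^{2n-1}_\rho))$ will then follow immediately from the $K_1$ theorem of Natsume and Olsen in \cite{na97}, since any matrix produced by a valid instance of the recurrence (\ref{NCrecursive}) qualifies as one of their generators.

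The base cases $k = 1$, with $Z_\rho(1) = z_1$, and $k = 2$, with $Z_\rho(2)$ as displayed just before the proposition, are handled by direct computation: continuity in $\rho$ and the compatibility condition are immediate from inspection, while the isometry identity at $k = 2$ reduces to a short manipulation using the normality relation (\ref{normal}) together with the noncommutativity relations (\ref{NCgen}) and (\ref{NCadj}). For the inductive step, assume the proposition holds for all parameter matrices and all indices strictly less than $k$. Since the compatibility hypothesis at smaller indices lets us restrict $\rho$ to its upper-left $k \times k$ block without changing any $Z_\rho(j)$ for $j < k$, we may as well take $n = k$. Let $\gamma$ be the minor of $\rho$ obtained by deleting row and column $k$, and $\omega$ the minor obtained by deleting row and column $k - 1$. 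By the inductive compatibility hypothesis, $Z_\omega(k - 2) = Z_\gamma(k - 2)$ as formal $*$-monomial matrices, so the recursion writes $Z_\omega(k - 1)$ in block form with corner $Z_\gamma(k - 2)$ and certain scalar diagonal coefficients $D_1, D_2$, both depending continuously on $\rho$. One then defines $Z_\rho(k)$ by the formula given in the excerpt using the new diagonals $D_1 \oplus \rho_{k - 1, k} D_2^*$ and $D_2 \oplus \overline{\rho_{k - 1, k}} D_1^*$. Continuity and compatibility are inherited automatically from this construction.

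The main obstacle is the isometry identity, which is exactly the tedious calculation the author explicitly omits. Expanding $Z_\rho(k) Z_\rho(k)^*$ as a $2 \times 2$ block product, the diagonal blocks collapse via the inductive identity for $Z_\rho(k - 1)$ together with the observation that the new scalar diagonals are unitary; this unitarity is itself forced by applying the inductive isometry identity to $Z_\omega(k - 1)$, which shows that every entry of $D_1$ and $D_2$ is unimodular. The off-diagonal blocks must vanish, and verifying this requires commuting $z_k$ and $z_k^*$ past the entries of $Z_\rho(k - 1)$ using (\ref{NCgen}) and (\ref{NCadj}); the precise role of the phase $\rho_{k - 1, k}$ in the new diagonals is to cancel the phase accumulated by pushing $z_k$ past $z_{k - 1}$ inside $Z_\rho(k - 1)$, so the bookkeeping is what makes the verification lengthy but formally mechanical. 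The dual identity $Z_\rho(k)^* Z_\rho(k) = (z_1 z_1^* + \ldots + z_k z_k^*) I$ follows either by the symmetric computation or from the observation that once both products are shown to be central scalar multiples of the identity, they must coincide. With the isometry identity in hand, the $K_1$ generator statement is exactly the corresponding theorem of \cite{na97}, finishing the proof.
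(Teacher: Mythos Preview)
Your proposal is correct and follows essentially the same route as the paper: the proposition is a summary of the inductive construction given in the paragraphs immediately preceding it, using the minors $\gamma$ and $\omega$ and the diagonals $D_1 \oplus \rho_{k-1,k} D_2^*$, $D_2 \oplus \overline{\rho_{k-1,k}} D_1^*$, with the isometry verification explicitly omitted as tedious. You reproduce this construction, add a reasonable sketch of how the omitted block computation would go, and correctly defer the $K_1$-generator claim to \cite{na97}; there is nothing substantively different between your argument and the paper's.
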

From now on, any mention of $Z_\rho(k)$ will refer to this single, continuous choice of coefficients, as in the previous proposition.
\begin{exam}
Consider a $5$-sphere with parameter matrix $\rho$ such that $z_2 z_1 = \alpha z_1 z_2$, $z_3 z_1 = \beta z_1 z_3$, and $z_3 z_2 = \gamma z_2 z_3$. We know that the $3$-sphere with generators $z_1$ and $z_2$ will have the matrix

\beu
Z_\rho(2) = \left[\begin{array}{cc} z_1 & z_2 \\ -\overline{\alpha} z_2^* & z_1^* \end{array} \right]
\eeu
as a $K_1$ generator, so this will be a building block for $Z_\rho(3)$. Now consider the $3$-sphere whose generators follow the same relations as $z_1$ and $z_3$. Its $K_1$ generator

\beu
\left[\begin{array}{cc} z_1 & z_3 \\ -\overline{\beta} z_3^* & z_1^* \end{array} \right]
\eeu
is formed from $Z_\rho(1) = z_1$ using diagonal matrices $D_1 = [1]$ in the upper right and $D_2 = [\overline{\beta}]$ in the lower left. This allows us to form $F_1 = D_1 \oplus \gamma D_2^* = \left[ \begin{array}{cc} 1 & 0 \\  0 & \gamma \beta \end{array} \right]$ and $F_2 = D_2 \oplus \overline{\gamma}D_1^* = \left[ \begin{array}{cc} \overline{\beta} & 0 \\  0 & \overline{\gamma} \end{array} \right]$, giving

\beu \ba
Z_\rho(3) 	&= \left[ \begin{array}{cc} Z_\rho(2) & z_3 F_1 \\ -z_3^* F_2 & Z_\rho(2)^* \end{array} \right] \\
	&=\left[ \begin{array}{cccc}z_1 & z_2 & z_3 & 0 \\ -\overline{\alpha} z_2^* & z_1^* & 0 & \gamma \beta z_3 \\ - \overline{\beta} z_3^* & 0 & z_1^* & -\alpha z_2 \\ 0 & -\overline{\gamma}z_3^* & z_2^* & z_1 \end{array} \right]
\ea \eeu
as the $K_1$ generator for $C(\S^5_\rho)$. One can verify that the noncommutativity relations above and the adjoint versions ($z_2^* z_1 = \overline{\alpha} z_1 z_2^*$, etc.) give that $Z_\rho(3)$ is, in fact, unitary.

\end{exam}

The algebras $C(\S^{2n - 1}_\rho)$ are obtained as Rieffel deformations $C(\S^{2n - 1})_J$, so for any fixed $n \times n$ antisymmetric $J$, $(C(\S^{2n - 1})_{tJ})_{t \in [0, 1]}$ forms a continuous field of $C^*$-algebras (\cite{ri93}, Theorem 8.13). Existence of a continuous choice of $K_1$ generators $Z_\rho(n)$ is then consistent with the following result of A. Sangha in \cite{sa11}. Here $A$ denotes a separable $C^*$ algebra with a strongly continuous $\R^n$ action, and $J$ is an antisymmetric $n \times n$ matrix.

\begin{thm}[\cite{sa11}, Theorem 4.6]
Let $h \in [0, 1]$. The evaluation map $\pi_h: \Gamma((A_{tJ})_{t \in [0, 1]}) \to A_{hJ}$ is a $KK$-equivalence.
\end{thm}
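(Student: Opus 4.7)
The plan is to show that the kernel of $\pi_h$ is $KK$-contractible, so that the short exact sequence
\beu
0 \longrightarrow \ker \pi_h \longrightarrow \Gamma((A_{tJ})_{t \in [0,1]}) \xrightarrow{\;\pi_h\;} A_{hJ} \longrightarrow 0
\eeu
forces $[\pi_h]$ to be invertible via the six-term exact sequence in $KK$-theory: $KK$-contractibility of the ideal makes both $\pi_{h*}$ and $\pi_h^*$ isomorphisms on $KK(B, -)$ and $KK(-, B)$ for every $B$, and a Yoneda-type argument then produces the inverse class in $KK(A_{hJ}, \Gamma((A_{tJ})_{t \in [0,1]}))$.

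First, I would set up the continuous field using Rieffel's quantization machinery (\cite{ri93}, Theorem 8.13). Each fiber $A_{tJ}$ contains a common dense Fr\'echet $*$-subalgebra $A^\infty$ of smooth vectors for the $\R^n$-action, and the $C^*$-norms $\|\cdot\|_{tJ}$ on $A^\infty$ depend continuously on $t$. This realizes $\Gamma((A_{tJ})_{t \in [0,1]})$ as a completion of continuous $A^\infty$-valued sections, so each evaluation $\pi_t$ is an honest $\R^n$-equivariant surjective $*$-homomorphism. Next, I would contract $\ker \pi_h$ by a radial homotopy of $*$-endomorphisms $\psi_\lambda : \ker \pi_h \to \ker \pi_h$, $\lambda \in [0,1]$, defined on smooth sections by $(\psi_\lambda f)(t) := f(h + \lambda(t - h))$, using the identification of the fibers on $A^\infty$. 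At $\lambda = 1$ one has $\psi_\lambda = \mathrm{id}$; at $\lambda = 0$ one has $\psi_\lambda = 0$ because $f(h) = 0$; and the strict quantization norm estimates (generalizing (\ref{torusnormfixed}) and (\ref{torusprodnorm})) make $\lambda \mapsto \psi_\lambda f$ norm-continuous. This homotopy through $*$-homomorphisms between $\mathrm{id}$ and $0$ proves $\ker \pi_h$ is $KK$-contractible.

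The main obstacle is the analytic content of the contraction step: extending $\psi_\lambda$ from the smooth section algebra to a bona fide $*$-endomorphism of the full $C^*$-algebra $\ker \pi_h$, and verifying continuity in $\lambda$ in the section $C^*$-norm. This requires uniform continuity of the deformed product as a function of the parameter for an arbitrary separable $C^*$-algebra with an $\R^n$-action, plus a Leibniz-type bound controlling $\|\psi_\lambda f - \psi_{\lambda'} f\|$ across fibers with different deformation parameters. These are precisely the inputs supplied by Rieffel's strict quantization theory, and once they are established the six-term exact sequence delivers $[\pi_h]$ as a $KK$-equivalence.
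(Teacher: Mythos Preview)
The paper does not prove this theorem; it is quoted from Sangha \cite{sa11}. The only hint the paper gives about the original argument is that the section algebra $\Gamma((A_{tJ})_{t\in[0,1]})$ is itself a Rieffel deformation $B_J$ of $B=C([0,1],A)$ under the action $\beta_{\vec{x}}f(s)=\sigma_{\sqrt{s}\,\vec{x}}(f(s))$, and that this global description is what drives Sangha's proof. So there is no in-paper proof to compare your outline against directly.

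That said, your contraction step has a genuine gap. The maps $\psi_\lambda$ defined by $(\psi_\lambda f)(t)=f(h+\lambda(t-h))$ are \emph{not} $*$-homomorphisms of the section algebra. The fiber at $t$ carries the product $\cdot_{tJ}$, while $f(h+\lambda(t-h))$ is a product computed in the fiber carrying $\cdot_{(h+\lambda(t-h))J}$; the identification of fibers through the common smooth subalgebra $A^\infty$ is only linear, not multiplicative. Concretely, with $s=h+\lambda(t-h)$,
\beu
(\psi_\lambda(fg))(t)=f(s)\cdot_{sJ}g(s),\qquad \big((\psi_\lambda f)(\psi_\lambda g)\big)(t)=f(s)\cdot_{tJ}g(s),
\eeu
and these differ unless $s=t$. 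So your homotopy runs through linear maps, not $*$-endomorphisms, and does not establish $KK$-contractibility of $\ker\pi_h$. The same obstruction appears if one tries to push the obvious contraction of $C_0([0,1]\setminus\{h\},A)$ through the deformation: reparametrizations of $[0,1]$ are not $\R^n$-equivariant for $\beta$ because of the factor $\sqrt{s}$, so they do not deform to $*$-homomorphisms of $B_J$.

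Sangha's route, as the paper indicates, instead uses that the \emph{entire} section algebra is a single Rieffel deformation $B_J$, which lets one import $KK$-statements known to be stable under deformation (e.g.\ via the crossed-product/Morita description of $A_J$) rather than attempting a fiberwise contraction. If you want to salvage your outline, you would need a genuinely $\R^n$-equivariant contraction of $\ker\pi_h$ inside $B$, and the $s$-dependence of $\beta$ blocks the naive one.
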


The algebra of sections $\Gamma((A_{tJ})_{t \in [0, 1]})$ over the continuous field $(A_{tJ})_{t \in [0, 1]}$ is a $C^*$-algebra that may itself be obtained through Rieffel deformation, which is useful in the proof from \cite{sa11}. Specifically, if $\sigma: \R^n \to \textrm{Aut}(A)$ denotes the original action, then equip $B = C([0,1], A)$ with the following $\R^n$ action, denoted $\beta$.

\be\label{eq:fibr}
\beta_{\vec{x}}f \in C([0, 1], A) \textrm{ is defined by } s \mapsto \sigma_{\sqrt{s}\vec{x}}(f(s))
\ee
One may then form the Rieffel deformation $B_J$, which is equipped with a $C([0, 1])-$structure $\Phi: C([0, 1]) \to Z(M(B_J))$ inherited from $B = C([0, 1], A)$. The fiber at $s \in [0, 1]$ is by definition $B_J / \{\Phi(g) \cdot b: b \in B_J, g \in C([0, 1]), g(s) = 0\}$, which is isomorphic to $A_{sJ}$. Finally, $B_J$ is shown to be a maximal algebra of cross-sections, hence the notation $\Gamma(A_{tJ})_{t \in [0, 1]}$, with the quotient maps onto the fibers denoted by $\pi_s$. The above result then says that each $\pi_s$ induces an invertible element of $KK(\Gamma((A_{tJ})_{t \in [0, 1]}), A_{sJ})$, and consequently the $K$-theory maps $(\pi_s)_*$ are isomorphisms. This is of interest when considering $\R^n$-equivariant maps.

\begin{cor}\label{cor:natdef}
Suppose $A$ and $B$ are separable $C^*$-algebras equipped with strongly continuous $\R^n$ actions. If $J$ is an antisymmetric $n \times n$ matrix and $\phi: A \to B$ is $\R^n$-equivariant, then let $\phi_J: A_J \to B_J$ denote the corresponding homomorphism on the Rieffel deformations. Then the following $K$-theory diagram commutes for $i \in \{0, 1\}$.

\begin{center}
\begin{tikzpicture}
  \matrix (m) [matrix of math nodes,row sep=3em,column sep=4em,minimum width=2em]
  {
     K_i(A) & & K_i(A_J) \\
     K_i(B) & & K_i(B_J)\\};
  \path[-stealth]
    (m-1-1) edge node [left] {$\phi_*$} (m-2-1)
            edge node [above] {$(\pi_1)_* \circ (\pi_0)_*^{-1}$} (m-1-3)
    (m-2-1.east|-m-2-2) edge node [above] {$(\pi_1)_* \circ (\pi_0)_*^{-1}$} (m-2-3)
    (m-1-3) edge node [right] {$(\phi_J)_*$} (m-2-3);
\end{tikzpicture}
\end{center}
\end{cor}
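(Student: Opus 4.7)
The plan is to leverage naturality of Rieffel deformation applied to the Sangha section algebra. First I would observe that since $\phi: A \to B$ is $\R^n$-equivariant, the pointwise map $\tilde\phi: C([0,1], A) \to C([0,1], B)$ defined by $\tilde\phi(f) = \phi \circ f$ is equivariant for the auxiliary actions $\beta^A$ and $\beta^B$ defined as in (\ref{eq:fibr}). Indeed,
\[
\tilde\phi(\beta^A_{\vec{x}} f)(s) = \phi(\sigma^A_{\sqrt{s}\vec{x}}(f(s))) = \sigma^B_{\sqrt{s}\vec{x}}(\phi(f(s))) = (\beta^B_{\vec{x}} \tilde\phi(f))(s).
\]
Because Rieffel deformation is functorial on equivariant morphisms, $\tilde\phi$ extends to a unital $*$-homomorphism $\tilde\Phi_J: \Gamma((A_{tJ})_{t \in [0,1]}) \to \Gamma((B_{tJ})_{t \in [0,1]})$ between the corresponding section algebras.

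Next I would verify that $\tilde\Phi_J$ intertwines the evaluation quotients: $\pi^B_s \circ \tilde\Phi_J = \phi_{sJ} \circ \pi^A_s$ for each $s \in [0,1]$. The $C([0,1])$-structures on the two section algebras are inherited from those on $C([0,1], A)$ and $C([0,1], B)$ prior to deformation, and $\tilde\phi$ is manifestly $C([0,1])$-linear. Hence $\tilde\Phi_J$ sends the ideal of sections vanishing at $s$ to its counterpart and descends to a map of fibers. Under the identifications of these fibers with $A_{sJ}$ and $B_{sJ}$, the descended map must agree on the common dense smooth subalgebra with the morphism induced by $\phi$ on the deformed products, which is precisely $\phi_{sJ}$.

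Applying $K_i$ now yields commuting squares at $s=0$ (where the fibers are $A$ and $B$) and at $s=1$ (where the fibers are $A_J$ and $B_J$):
\[
(\pi^B_s)_* \circ (\tilde\Phi_J)_* = (\phi_{sJ})_* \circ (\pi^A_s)_*, \qquad s \in \{0,1\}.
\]
By the cited theorem of Sangha, $(\pi^A_0)_*$ and $(\pi^B_0)_*$ are isomorphisms, so I can invert them and splice the two squares together to obtain
\[
(\phi_J)_* \circ (\pi^A_1)_* \circ (\pi^A_0)_*^{-1} = (\pi^B_1)_* \circ (\pi^B_0)_*^{-1} \circ \phi_*,
\]
which is the asserted commutativity.

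The only real content is the claim that the functoriality of Rieffel deformation extends to the section algebra level, and that the induced map on fibers is literally $\phi_{sJ}$ rather than some \emph{a priori} distinct homomorphism. This is the main (mild) obstacle: it amounts to checking that the universal property defining $\phi_{sJ}$ on $A_{sJ}$ is satisfied by the descent of $\tilde\Phi_J$, which reduces to naturality of the Rieffel construction with respect to equivariant quotients and is essentially built into the construction of $\Gamma((A_{tJ})_{t \in [0,1]})$ as a Rieffel deformation of $C([0,1], A)$ under $\beta^A$.
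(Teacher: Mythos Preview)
Your proposal is correct and follows essentially the same route as the paper: define the pointwise map on $C([0,1],A)$, check equivariance for the auxiliary action (\ref{eq:fibr}), deform to obtain a map between section algebras that intertwines the evaluation quotients $\pi_s$, and then pass to $K$-theory and invert $(\pi_0)_*$ using Sangha's theorem. The paper's argument is terser about the fiber identification you flag in your final paragraph, but the content is the same.
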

\begin{proof}
The map $\phi$ induces a homomorphism $\Gamma(\phi)$ between the section algebras by applying $\phi_{sJ}: A_{sJ} \to B_{sJ}$ fiberwise. This is equivalent to defining a homomorphism $\Phi: C([0, 1], A) \to C([0, 1], B)$ using $\phi$ pointwise, noting that $\Phi$ is itself $\R^n$ equivariant (for actions in the sense of (\ref{eq:fibr})), and examining the deformed homomorphism $\Phi_J$. We then have the following commutative diagram of homomorphisms.

\begin{center} \begin{tikzpicture}
  \matrix (m) [matrix of math nodes,row sep=3em,column sep=4em,minimum width=2em]
  {
     A & \Gamma((A_{tJ})_{t \in [0, 1]}) & A_J \\
     B & \Gamma((B_{tJ})_{t \in [0, 1]}) & B_J\\};
  \path[-stealth]
    (m-1-1) edge node [left] {$\phi$} (m-2-1)
    (m-1-2) edge node [above] {$\pi_0$} (m-1-1)
    (m-2-2) edge node [above] {$\pi_0$} (m-2-1.east|-m-2-2)
    (m-1-2) edge node [right] {$\Gamma(\phi)$} (m-2-2)
    (m-1-2) edge node [above] {$\pi_1$} (m-1-3)
    (m-2-2) edge node [above] {$\pi_1$} (m-2-3)
    (m-1-3) edge node [right] {$\phi_J$} (m-2-3);
\end{tikzpicture} \end{center}
All that remains is to push this diagram to $K$-theory, where each $(\pi_s)_*$ is an isomorphism, and to cut out the middle.
\end{proof}

This corollary, which is by no means new, shows that the isomorphisms between the $K$-theory of a Rieffel deformed algebra and the original algebra are natural for equivariant homomorphisms. This is not obvious (to me, at least!) based solely on Rieffel's construction in \cite{ri93b}, but as we have seen, it is much easier to prove with knowledge of $KK$-equivalences.

\section{A Noncommutative Borsuk-Ulam Theorem} \label{sec:BUR}

If $\Phi: C(\S^{2n - 1}_\rho) \to C(\S^{2n - 1}_\omega)$ is a unital $*$-homomorphism which respects the $\Z_2$ structure, then in particular it sends the $K_1$ generator $Z_\rho(n)$, which has each entry a multiple of $z_i$ or $z_i^*$, to another matrix which is odd in each entry. This implies that $Z_\omega(n)^* \cdot \Phi(Z_\rho(n))$ is a $2^{n - 1} \times 2^{n - 1}$ matrix of even elements. If we assume that every invertible matrix with even entries is an even integer in $K_1$, then we can conclude that $\Phi(Z_\rho(n))$ corresponds to an odd integer in $K_1$ and is therefore nontrivial. In other words, we conclude that $\Phi_*$ is nontrivial on $K_1$. Note that the $\Z_2$ map $T$ does not change the $K_1$ class of an invertible matrix, as verified by checking on the $K_1$ generator $Z_\rho(n)$. This matrix satisfies $T(Z_\rho(n)) = - Z_\rho(n)$, which is $K_1$-equivalent to $Z_\rho(n)$ by scaling $-1$ to $1$ within the nonzero constants. In other words, $T$ is orientation preserving.

\begin{lem}\label{lem:evensallrho}
If $F$ is an invertible matrix over $C(\S^{2n - 1}_\rho)$ and each entry of $F$ is even, then the $K_1$ class of $F$ is an even multiple of the generator.
\end{lem}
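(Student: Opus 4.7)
The plan is to identify the subalgebra of even elements of $C(\S^{2n-1}_\rho)$ as a Rieffel deformation of $C(\RP^{2n-1})$ and then use Corollary~\ref{cor:natdef} to reduce the claim to the classical $K$-theoretic fact that the antipodal double cover $\S^{2n-1} \to \RP^{2n-1}$ induces multiplication by $2$.

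The $\R^n$ action used to deform $C(\S^{2n-1})$ to $C(\S^{2n-1}_\rho)$ factors through the torus action $(z_1,\dots,z_n) \mapsto (e^{2\pi i\theta_1}z_1,\dots,e^{2\pi i\theta_n}z_n)$, and the antipodal map $T$ is precisely the value of this torus action at $(\tfrac{1}{2},\dots,\tfrac{1}{2})$. Consequently, the $\R^n$ action commutes with $T$ and restricts to $C(\RP^{2n-1}) = C(\S^{2n-1})^{\Z_2}$, and the Rieffel deformation of this restricted action coincides with the fixed-point subalgebra $B_\rho := C(\S^{2n-1}_\rho)^{\Z_2}$; the deformed product $\cdot_J$ preserves the $\T^n$-homogeneous decomposition (as in the discussion preceding Proposition~\ref{prop:2torus}), and hence also the $\Z_2$-grading. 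Since the inclusion $\iota: C(\RP^{2n-1}) \hookrightarrow C(\S^{2n-1})$ is $\R^n$-equivariant, Corollary~\ref{cor:natdef} yields a commuting square
\begin{equation*}
\begin{array}{ccc}
K_1(C(\RP^{2n-1})) & \xrightarrow{\iota_*} & K_1(C(\S^{2n-1})) \\
\downarrow & & \downarrow \\
K_1(B_\rho) & \xrightarrow{(\iota_\rho)_*} & K_1(C(\S^{2n-1}_\rho))
\end{array}
\end{equation*}
in which the vertical maps are isomorphisms of $\Z$.

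Now $T(F)=F$, so applying $T$ to $FF^{-1}=I$ gives $F^{-1}=T(F^{-1})$; that is, $F^{-1}$ also has even entries. Hence $F$ is invertible in $M_k(B_\rho)$ and determines a class $[F] \in K_1(B_\rho)$, whose image in $K_1(C(\S^{2n-1}_\rho))$ is the class of $F$ we want to identify. By the commuting square it suffices to show that $\iota_*$ has image $2\Z$. As a homomorphism between copies of $\Z$, $\iota_*$ is determined by its effect after $\otimes \Q$; naturality of the odd Chern character (as used in Section~\ref{sec:setup}) identifies $\iota_* \otimes \Q$ with the map on top rational cohomology $p^*: H^{2n-1}(\RP^{2n-1};\Q) \to H^{2n-1}(\S^{2n-1};\Q)$ induced by the double cover $p$. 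Both groups are $\Q$ since $\RP^{2n-1}$ is orientable, and $p^*$ is multiplication by $\deg(p)=2$.

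The chief obstacle is justifying cleanly that Rieffel deformation commutes with the formation of fixed-point subalgebras for actions commuting with the deforming $\R^n$; this is standard on smooth, $\T^n$-finite vectors but requires care in passing to the $C^*$-completion. Once it is in place, the rest is a direct application of Corollary~\ref{cor:natdef} and the Chern character computation above, and both the identification and the computation should generalize to the fixed-point subalgebras for the finite cyclic rotations used in Corollary~\ref{cor:ZnBU}.
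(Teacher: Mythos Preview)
Your proposal is correct and follows essentially the same route as the paper: identify $C(\S^{2n-1}_\rho)^{\Z_2}$ with the Rieffel deformation of $C(\RP^{2n-1})$ (using that the antipodal map commutes with the defining $\R^n$ action), invoke Corollary~\ref{cor:natdef} on the equivariant inclusion $\iota$, and compute the commutative case via the Chern character. The only differences are cosmetic---you make explicit that $F^{-1}$ is even and flag the identification $(C(\S^{2n-1})^{\Z_2})_J \cong C(\S^{2n-1}_\rho)^{\Z_2}$ as a point needing care, which the paper dispatches with ``unpacking the definitions.''
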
 
\begin{proof}
The algebra $C(\S^{2n - 1})^{\Z_2} \cong C(\R\mathbb{P}^{2n - 1})$ of even functions in the commutative sphere $C(\S^{2n - 1})$ gives rise to an inclusion map $\iota: C(\S^{2n - 1})^{\Z_2} \to C(\S^{2n - 1})$ which is dual to the projection $\pi: \S^{2n - 1} \to \R\mathbb{P}^{2n - 1}$. Applying the Chern character shows that the image of $K_1(C(\R\mathbb{P}^{2n - 1})) \cong K^1(\R\mathbb{P}^{2n - 1})$ in $K_1(C(\S^{2n - 1})) \cong K^1(\S^{2n - 1}) \cong \Z$ is $2 \mathbb{Z}$ from the corresponding result in cohomology. 

The antipodal action on the commutative sphere commutes with the $\R^n$ action of coordinatewise rotation that defines $C(\S^{2n - 1}_\rho) \cong C(\S^{2n - 1})_J$ for a suitable antisymmetric $J$. As such, the fixed point subalgebra $C(\S^{2n - 1})^{\Z_2}$ is itself $\R^n$-equivariant, and we may form its Rieffel deformation $(C(\S^{2n - 1})^{\Z_2})_J$. From the inclusion map $\iota: C(\S^{2n - 1})^{\Z_2} \to C(\S^{2n - 1})$ we reach the following commutative diagram from Corollary \ref{cor:natdef}.

\begin{center}
\begin{tikzpicture}
  \matrix (m) [matrix of math nodes,row sep=3em,column sep=4em,minimum width=2em]
  {
     K_1(C(\S^{2n - 1})^{\Z_2}) & & K_1((C(\S^{2n - 1})^{\Z_2})_J) \\
     K_1(C(\S^{2n - 1}))) & & K_1(C(\S^{2n - 1})_J)\\};
  \path[-stealth]
    (m-1-1) edge node [left] {$\iota_*$} (m-2-1)
            edge node [above] {$\cong$} (m-1-3)
    (m-2-1.east|-m-2-2) edge node [above] {$\cong$} (m-2-3)
    (m-1-3) edge node [right] {$(\iota_J)_*$} (m-2-3);
\end{tikzpicture}
\end{center}

All of the groups above are $\Z$, and $\iota_*$ has range $2 \Z$, so $(\iota_J)_*$ must also have range $2\Z$. Unpacking the definitions shows that $\iota_J$ is simply the inclusion map of $C(\S^{2n - 1}_\rho)^{\Z_2}$ into $C(\S^{2n - 1}_\rho)$, completing the proof.
\end{proof}

We then reach a Borsuk-Ulam result as a corollary.

\begin{cor}\label{cor:BU}
Suppose $\Phi: C(\S^{2n - 1}_\rho) \to C(\S^{2n - 1}_\omega)$ is a unital $*$-homomorphism between two Natsume-Olsen spheres of the same dimension. If $\Phi$ is equivariant for the antipodal maps, then $\Phi$ induces a nontrivial map on $K_1$. More precisely, $\Phi_*: \Z \to \Z$ is multiplication by an odd integer.
\end{cor}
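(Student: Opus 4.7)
The plan is to exploit the recursive $K_1$ generator $Z_\rho(n)$ constructed in the previous section, whose entries are all scalar multiples of some $z_i$ or $z_i^*$ and hence all odd under the antipodal action $T_\rho$. First I would observe that equivariance $T_\omega \circ \Phi = \Phi \circ T_\rho$ forces
\[
T_\omega(\Phi(Z_\rho(n))) = \Phi(T_\rho(Z_\rho(n))) = -\Phi(Z_\rho(n)),
\]
so $\Phi(Z_\rho(n))$ is entrywise odd in $C(\S^{2n-1}_\omega)$.

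Next, set $M := Z_\omega(n)^* \cdot \Phi(Z_\rho(n))$. Both factors are unitary (since $z_1 z_1^* + \ldots + z_n z_n^* = 1$ in any Natsume-Olsen sphere forces $Z_\omega(n)Z_\omega(n)^* = Z_\omega(n)^*Z_\omega(n) = I$), so $M$ is invertible. Applying $T_\omega$ entrywise yields
\[
T_\omega(M) = (-Z_\omega(n))^* \cdot (-\Phi(Z_\rho(n))) = M,
\]
so every entry of $M$ is even. Lemma~\ref{lem:evensallrho} then forces $[M] \in 2\Z$ inside $K_1(C(\S^{2n-1}_\omega)) \cong \Z$.

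Finally, since $Z_\omega(n)$ represents the generator $1 \in K_1(C(\S^{2n-1}_\omega))$ and $K_1$ classes add under multiplication of invertibles, the factorization $\Phi(Z_\rho(n)) = Z_\omega(n) \cdot M$ yields
\[
\Phi_*(1) = [\Phi(Z_\rho(n))] = [Z_\omega(n)] + [M] = 1 + 2k
\]
for some $k \in \Z$. Hence $\Phi_*$ is multiplication by an odd integer, and in particular nontrivial. The substantive content sits in Lemma~\ref{lem:evensallrho} (which already leaned on naturality of Rieffel deformation and the cohomological computation for the commutative case); granted that lemma, the present corollary is just $\Z_2$-homogeneity bookkeeping applied to the canonical generator.
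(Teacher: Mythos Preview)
Your argument is correct and is essentially the paper's own proof, just with the bookkeeping (equivariance of $\Phi(Z_\rho(n))$, invertibility of $M$, and the additivity $[\Phi(Z_\rho(n))]=[Z_\omega(n)]+[M]$) spelled out more explicitly. The paper states the same idea in two sentences: $Z_\omega(n)^*\cdot\Phi(Z_\rho(n))$ has even entries, so by Lemma~\ref{lem:evensallrho} its class lies in $2\Z$, forcing $[\Phi(Z_\rho(n))]$ to be odd.
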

\begin{proof}
The $K_1$ generators $Z_\rho(n)$ and $Z_\omega(n)$ are $2^{n - 1} \times 2^{n - 1}$ matrices with odd entries, so $Z_\omega(n)^* \cdot \Phi(Z_\rho(n))$ is a $2^{n - 1} \times 2^{n - 1}$ matrix with even entries, which must be an even integer in $K_1 \cong \Z$. This implies that $\Phi(Z_\rho(n))$ corresponds to an odd integer, so $\Phi_*$ is nontrivial.
\end{proof}
\noindent \textit{Remark.} This gives a positive answer to Question 4 of \cite{ta12} for Natsume-Olsen spheres and the usual $\Z_2$ graded structure.

One version of the Borsuk-Ulam theorem claims there is no odd, continuous map $\S^{k} \to \S^{k - 1}$. The Natsume-Olsen spheres are only defined in odd dimension, but if one generator is required to be self-adjoint, this reduces the dimension by one. When this generator is central, the resulting $C^*$-algebra is the $\theta$-deformed even sphere $C(\S^{2m}_\theta)$; see \cite{pe13}, and for earlier discussions see \cite{co01} (for dimension four) and \cite{co02}. In the definition below, we have changed the notation of \cite{pe13} to remain consistent with the choice of coordinate $\rho$ (parameter matrix) instead of $\theta$ (antisymmetric matrix).

\begin{defn}(\cite{pe13}, Definition 2.4)
Let $\rho$ be an $n \times n$ parameter matrix. Then $C(\S^{2n}_\rho)$ is the universal, unital $C^*$-algebra generated by $z_1, \ldots, z_n$ and $x$ satisfying these relations.

\beu
\begin{array}{cccc} z_j z_j^* = z_j^* z_j  &	& &	x = x^* \\ \\z_k z_j = \rho_{jk}z_j z_k  & &	& x z_j = z_j x \\  \end{array}
\eeu
\vspace{.2cm}
\beu
x^2 + z_1 z_1^* + z_2 z_2^* + \ldots + z_n z_n^* = 1
\eeu
\end{defn}

In the above definition, $x$ commutes with the other generators. This is partly because of Lemma 2.6 in \cite{na97}; if instead one chose that $x z_j = \omega_j z_j x$ for some $\omega_j \in \C$, then it would follow that $x z_j = \overline{\omega_j} z_j x$ as well, since $x$ is self-adjoint and $z_j$ is normal. So, to avoid triviality, one should ban relations $x z_j = \omega_j z_j x$ when $\omega_j \not\in \R$. From the point of view of generators and relations alone, we do not see an immediate reason why insisting $x$ commutes with some $z_j$, but anticommutes with other $z_j$, would be flawed. Regardless, we stick with the established spheres $C(\S^{2n}_\rho)$ for this paper, in which $x$ is a central element. These even spheres have a $\Z_2$ action formed by negating every generator, just as in the odd case.

M. Peterka remarks in \cite{pe13} that $K_1(C(\S^{2n}_\rho))$ is the trivial group, as one would expect from Rieffel deformation and consideration of the commutative case. Now, the topological sphere $\S^{k - 1}$ sits inside $\S^k$ as the equator, in such a way that the antipodal maps are compatible and $\S^{k - 1}$ lies inside a contractible subset of $\S^k$. The next two definitions give algebraic versions of this topological embedding; note that the maps are automatically $K_1$-trivial since the even spheres have trivial $K_1$ groups.

\begin{defn}
Suppose $\rho$ is an $n \times n$ parameter matrix with $\rho_{in} = \rho_{ni} = 1$ for all $i$, and let $\widetilde{\rho}$ be the minor of $\rho$ formed by removing row and column $n$. Then $\pi: C(\S^{2n - 1}_\rho) \to C(\S^{2n - 2}_{\widetilde{\rho}})$ is the unique, unital $*$-homomorphism defined by $z_i \mapsto z_i$ for $1 \leq i \leq n - 1$ and $z_n \mapsto x$.
\end{defn}
\noindent \textit{Remark.} We insist $z_n$ commutes with each $z_i$ in the odd sphere because $x$ commutes with each $z_i$ in the even sphere.

\begin{defn}
Suppose $\rho$ is an $n \times n$ parameter matrix.  Then $\pi: C(\S^{2n}_\rho) \to C(\S^{2n - 1}_\rho)$ is the unique, unital $*$-homomorphism defined by $z_i \mapsto z_i$ and $x \mapsto 0$.
\end{defn}

In both cases, $\pi$ exists due to the relations defining the algebras, $\pi$ respects the $\Z_2$ structure, and $\pi$ is automatically $K_1$-trivial. This leads to the following consequence of Corollary \ref{cor:BU}.

\begin{cor}\label{cor:dimensionNCBU}
There is no unital $*$-homomorphism $\Psi: C(\S^{k - 1}_\rho) \to C(\S^{k}_\omega)$ which is equivariant for the antipodal maps.
\end{cor}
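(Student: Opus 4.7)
The plan is to split on the parity of $k$ and in each case produce an auxiliary equivariant unital $*$-homomorphism between two Natsume-Olsen odd spheres of the same dimension that factors through an even Natsume-Olsen sphere. Since even Natsume-Olsen spheres have trivial $K_1$, any such composite must vanish on $K_1$, directly contradicting Corollary \ref{cor:BU}, which would force multiplication by an odd integer.

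If $k = 2m$ is even, then $C(\S^{k-1}_\rho) = C(\S^{2m-1}_\rho)$ is an odd sphere and $C(\S^{k}_\omega) = C(\S^{2m}_\omega)$ is an even sphere, so I would post-compose $\Psi$ with the equivariant projection $\pi: C(\S^{2m}_\omega) \to C(\S^{2m-1}_\omega)$ sending $z_i \mapsto z_i$ and $x \mapsto 0$. Then $\pi \circ \Psi$ is an equivariant unital $*$-homomorphism between odd Natsume-Olsen spheres of dimension $2m - 1$, so Corollary \ref{cor:BU} applies; however, $\pi_*$ factors through $K_1(C(\S^{2m}_\omega)) = 0$, hence $(\pi \circ \Psi)_* = 0$ on $K_1$, a contradiction. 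If instead $k = 2m+1$ is odd, then $C(\S^{k-1}_\rho) = C(\S^{2m}_\rho)$ is an even sphere with $\rho$ an $m \times m$ parameter matrix, while $C(\S^{k}_\omega) = C(\S^{2m+1}_\omega)$ is odd. I would extend $\rho$ to an $(m+1) \times (m+1)$ parameter matrix $\rho'$ by appending a row and column of $1$'s; this satisfies $\rho'_{i, m+1} = \rho'_{m+1, i} = 1$ for all $i$, so the projection $\pi: C(\S^{2m+1}_{\rho'}) \to C(\S^{2m}_\rho)$ of the first kind (sending $z_{m+1} \mapsto x$) is defined and equivariant. The composite $\Psi \circ \pi: C(\S^{2m+1}_{\rho'}) \to C(\S^{2m+1}_\omega)$ is then an equivariant unital $*$-homomorphism between odd spheres of dimension $2m+1$, to which Corollary \ref{cor:BU} applies; but this composite factors through $K_1(C(\S^{2m}_\rho)) = 0$, again contradicting the conclusion that $(\Psi \circ \pi)_*$ must be multiplication by an odd integer.

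I do not expect a genuine obstacle: the entire argument is a direct reduction to Corollary \ref{cor:BU} via the two varieties of equivariant projection maps already constructed in the excerpt. The only mild care is in Case 2, where the parameter matrix $\rho$ of the domain must be extended to a compatible $\rho'$ so that the projection $\pi: C(\S^{2m+1}_{\rho'}) \to C(\S^{2m}_\rho)$ is well defined; appending ones suffices precisely because these new noncommutativity parameters force $z_{m+1}$ to commute with every $z_i$, mirroring the centrality of $x$ in the even sphere.
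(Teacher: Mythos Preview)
Your proposal is correct and follows essentially the same approach as the paper: split on the parity of $k$, and in each case compose $\Psi$ with the appropriate equivariant projection $\pi$ (post-composing when $k$ is even, pre-composing from an odd sphere with an extended parameter matrix when $k$ is odd) to obtain an equivariant unital $*$-homomorphism between odd spheres of the same dimension whose induced $K_1$ map factors through the trivial $K_1$ of an even sphere, contradicting Corollary~\ref{cor:BU}. The only cosmetic difference is notation ($m$ versus $n$, $\rho'$ versus $\mathcal{P}$).
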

\begin{proof}
If $k = 2n$, then consider $\pi: C(\S^{2n}_\omega) \to C(\S^{2n - 1}_\omega)$. Since $\pi$ respects the antipodal maps and is $K_1$-trivial, $\Phi = \pi \circ \Psi: C(\S^{2n - 1}_\rho) \to C(\S^{2n - 1}_\omega)$ also respects the antipodal maps and is $K_1$-trivial. This contradicts Corollary \ref{cor:BU}.

If $k = 2n - 1$, then $\rho$ has dimensions $(n - 1) \times (n - 1)$. Let $\mathcal{P}$ be the $n \times n$ parameter matrix with $\rho$ in the upper left and all other entries equal to $1$, and form $\pi: C(\S^{2n-1}_{\mathcal{P}}) \to C(\S^{2n - 2}_\rho)$. Then $\Phi = \Psi \circ \pi: C(\S^{2n - 1}_{\mathcal{P}}) \to C(\S^{2n - 1}_\omega)$ contradicts Corollary \ref{cor:BU}.
\end{proof}

This corollary generalizes the Borsuk-Ulam theorem in all dimensions, as desired, and it is analogous to the $q$-deformed case of \cite{ya13}. Now, just as the map $T$ on the noncommutative sphere $C(\S^{2n - 1}_\rho)$ supplies a $\Z_2$ action which generalizes the antipodal map $(z_1, \ldots, z_n) \mapsto (-z_1, \ldots, -z_n)$ on $\S^{2n - 1}$, there is nothing stopping us from defining a similar map for higher order rotations on each coordinate $z_i$. Let $\alpha_1, \ldots, \alpha_n$ be primitive $k$th roots of unity, $k \geq 2$. Then there is a unital $*$-homomorphism 

\be\label{rotationmap}
R: C(\S^{2n - 1}_\rho) \to C(\S^{2n - 1}_\rho)
\ee
\beu
z_i \mapsto \alpha_i z_i
\eeu

\noindent which generalizes coordinatewise rotation (with the same finite order on each coordinate) on the sphere $\S^{2n - 1}$. Again, $R$ exists due to the fact that the elements $\alpha_i z_i$ satisfy relations corresponding to (\ref{normal}), (\ref{NCgen}), and (\ref{SS}), and this action is again just a remnant of the full $\R^d$ rotation which deforms $C(\S^{2n - 1})$. The $K_1$ generator $Z_\rho(n)$ is usually not homogeneous for $R$; the entries are all homogeneous, but the homogeneity class changes by entry. This differs from the $\Z_2$ case, in which we could simply observe that each entry was odd. However, a quick inductive argument on the recursive definition $Z_\rho(1) = z_1$, $Z_\rho(k + 1) = \left[\begin{array}{cc} Z_\rho(k) & z_{k + 1} D_1 \\ -z_{k + 1}^* D_2 & Z_\rho(k)^*  \end{array}\right]$ shows that independent of $\rho$, there are diagonal unitary matrices $A$ and $B$ over $\C$ with $A^k = B^k = I$ for which

\be\label{inconhom}
R(Z_\rho(n)) = A Z_\rho(n) B
\ee
holds. These matrices encode the homogeneity classes of the different entries of $Z_\rho(n)$.  Further, since $A$ and $B$ have scalar entries, $R(Z_\rho(n))$ and $Z_\rho(n)$ are equivalent in $K_1(C(\S^{2n - 1}_\rho))$. It follows that $R$ preserves the $K_1$ class of any invertible matrix. As usual, we have extended $R$ to matrix algebras by entrywise application.

Because $Z_\rho(n)$ does not have consistent homogeneity among its entries, it is not immediately clear how to start with a unital $*$-homomorphism $\Phi: C(\S^{2n - 1}_\rho) \to C(\S^{2n - 1}_\omega)$ which respects a rotation and form an element which is fixed by that rotation. This was the main trick of the previous results: if $\Phi$ respects the antipodal map, then since $Z_\rho(n)$ and $Z_\omega(n)$ are odd, $Z_\omega(n)^* \cdot \Phi(Z_\rho(n))$ is even. Equation (\ref{inconhom}) includes a matrix multiplication on both sides, so $R(Z_\omega(n)^* \cdot \Phi(Z_\rho(n))) = B^* Z_\omega(n)^* \Phi(Z_\rho(n)) B$, and there is still a scalar matrix conjugation present. Because of this complication, we should examine the fixed point subalgebras of the various actions

\beu
R_U: M \mapsto U^* R(M) U
\eeu
for unitaries $U \in \mathcal{U}_d(\C)$ whose orders divide $k$ (the order of $R$). When $U$ is fixed, but we wish to increase the dimension of $M$, we allow $M \in M_{qd}(C(\S^{2n - 1}_\rho))$ and let $R_U$ act on each $d \times d$ block, or equivalently apply a conjugation by a diagonal block matrix of $q$ copies of $U$. 

It is immediate that there is a $qd \times qd$ invertible matrix, fixed by $R_U$, whose $K_1$ class is represented by $k$ in the commutative sphere $C(\S^{2n - 1})$. Indeed, take a $K_1$ generator $G$ of size $qd \times qd$, scale $G$ by a scalar unitary to assign the identity matrix at a pole, and then form a continuous path of invertibles that starts with $G$ and ends with a matrix $H$ that assigns the identity outside of a small neighborhood of the opposite pole. If the neighborhood is small enough that it does not intersect any of its images under the $\Z_k$ rotation $R$, then the product $G \cdot R_U(G) \cdots {R_U}^{k - 1}(G)$ will commute and produce an $R_U$-invariant matrix with $K_1$ class equal to $k \in \Z$. This matches with our intuition from the antipodal map, where even invertibles were assigned even integers.

\begin{lem}\label{lem:commZnfixed}
If $U \in \mathcal{U}_d(\C)$ is a scalar unitary whose order divides $k$ (the order of the rotation $R$), and $M \in GL_d(C(\S^{2n - 1}))$ is an invertible matrix over the commutative sphere with $R_U(M) = M$, then the $K_1(C(\S^{2n - 1}))$ class of $M$ is in $k \Z$. Further, the fixed point subalgebra $M_d(C(\S^{2n - 1}))^{R_U}$ has $K_1$ group isomorphic to $\Z$.
\end{lem}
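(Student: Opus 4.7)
The plan is to reduce the problem to topology on the quotient of the free $\Z_k$-action on the sphere. Since each $\alpha_i$ is a primitive $k$-th root of unity and $k \geq 2$, a fixed point of $R$ on $\S^{2n-1}$ would force every $z_i = 0$, contradicting $\sum |z_i|^2 = 1$, so $R$ acts freely and $\pi: \S^{2n-1} \to Y := \S^{2n-1}/\Z_k$ is a $k$-fold covering onto a compact manifold (a weighted lens space). Without loss of generality, assume $U = \textrm{diag}(\zeta^{j_1}, \ldots, \zeta^{j_d})$ with $\zeta = e^{2\pi i/k}$, since conjugation by a scalar unitary intertwines the respective fixed-point subalgebras and acts as the identity on $K_1(M_d(C(\S^{2n-1})))$.

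The key structural observation is that the $\Z_k$-action $\textrm{Ad}(U)$ on $M_d(\C)$ is inner. Hence the associated bundle $E := \S^{2n-1} \times_{\Z_k} M_d(\C)$ over $Y$ is the endomorphism bundle $\textrm{End}(V)$ of the rank $d$ vector bundle $V := \S^{2n-1} \times_{\Z_k} \C^d$, and $B := M_d(C(\S^{2n-1}))^{R_U}$ identifies with $\Gamma(\textrm{End}(V))$, which is Morita equivalent to $C(Y)$ via the bimodule $\Gamma(V)$. Consequently $K_1(B) \cong K^1(Y)$, and a standard computation (either via the Atiyah--Hirzebruch spectral sequence, or by writing $Y$ as an iterated principal $\S^1$-bundle over a weighted projective base) gives $K^1(Y) \cong \Z$, establishing the second assertion of the lemma modulo the first.

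For the first claim, I would compute $[M]$ via the odd Chern character. After smoothing (average a smooth invertible approximation of $M$ over $\Z_k$ to preserve $R_U$-invariance; by continuity the average remains invertible and $K_1$-equivalent to $M$), the class $[M]$ is represented by $c_n \int_{\S^{2n-1}} \textrm{tr}\bigl((M^{-1}dM)^{2n-1}\bigr)$ for a universal constant $c_n$. The hypothesis $R(M) = U M U^*$ together with cyclicity of the trace makes $\omega := \textrm{tr}((M^{-1}dM)^{2n-1})$ an $R$-invariant form on $\S^{2n-1}$, so it descends to a form $\omega_Y$ on $Y$ with $\pi^*\omega_Y = \omega$, and the $k$-sheeted cover gives $\int_{\S^{2n-1}} \omega = k \int_Y \omega_Y$. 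The factor $c_n \int_Y \omega_Y$ is an integer because it is the odd Chern number of the class $M$ defines in $K^1(Y)$ via the Morita identification of the previous paragraph, hence $[M] \in k\Z$. Combined with the explicit $R_U$-invariant invertible of $K_1$-class $k$ constructed in the paragraph just before the lemma, the inclusion $B \hookrightarrow M_d(C(\S^{2n-1}))$ induces multiplication by $\pm k$ on $K_1 \cong \Z$.

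The main obstacle is the integrality of $c_n \int_Y \omega_Y$, which must be justified without circularity. The cleanest route uses the Morita identification $B \sim_{\textrm{Morita}} C(Y)$ from the second paragraph, which makes $M$ a bona fide $K^1(Y)$-class and lets ordinary Chern--Weil theory on the compact manifold $Y$ handle integrality. An alternative that bypasses the Chern character altogether invokes the transfer $\textrm{tr}_\pi: K^1(\S^{2n-1}) \to K^1(Y)$ with $\pi^* \textrm{tr}_\pi = \sum_{g \in \Z_k} g^*$; since $R$ acts trivially on $K^1(\S^{2n-1})$ (as (\ref{inconhom}) shows), this sum equals $k \cdot \textrm{id}$, so every $R_U$-invariant invertible has $K_1$-class in $\textrm{image}(\pi^*) \subseteq k\Z$.
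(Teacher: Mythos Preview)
Your main argument is correct and takes a genuinely different route from the paper. The paper never passes to the quotient $Y=\S^{2n-1}/\Z_k$; instead it fixes an invariant subcomplex $X_n\subset\S^{2n-1}$ (the union of $k$ closed $(2n-2)$-balls meeting along their common boundary $\S^{2n-3}$), and runs an induction on $n$ through the six-term exact sequence for the ideal of $R_U$-invariant matrix functions vanishing on $X_n$. The inductive step shows $K_1(M_d(C(X_n))^{R_U})=0$, forcing $K_1$ of the invariant algebra to be a cyclic quotient of $K_1(C_0(\mathbb{B}^{2n-1}))\cong\Z$; every class is then visibly a $k$-fold ``orbit product'' $G\cdot R_U(G)\cdots R_U^{k-1}(G)$ with $G$ supported in a single fundamental domain, which lands in $k\Z$. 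Your argument is more conceptual: recognizing $M_d(C(\S^{2n-1}))^{R_U}\cong\Gamma(\textrm{End}(V))$ and passing by Morita to $C(Y)$ identifies the inclusion-induced map with $\pi^*:K^1(Y)\to K^1(\S^{2n-1})$, after which everything follows from the degree-$k$ cover. This makes the role of the lens space transparent and explains cleanly why the conjugation by $U$ is harmless (it changes only $V$, invisible after Morita), at the cost of importing bundle theory and Chern--Weil machinery that the paper's bare-hands six-term argument avoids.

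Two points need tightening. First, the integrality of $c_n\int_Y\omega_Y$ is not automatic from ``it is an odd Chern number'': Chern characters of $K$-classes are not integral in general. What saves you here is that, after stably trivializing $V$, the descended automorphism is a genuine map $g:Y\to GL_N(\C)$ and the form $c_n\,\textrm{tr}((g^{-1}dg)^{2n-1})$ represents the \emph{integral} primitive generator of $H^{2n-1}(GL_N(\C);\Z)$, so its integral over the closed oriented manifold $Y$ lies in $\Z$. Second, your alternative transfer argument has the containment backwards. The identity $\pi^*\,\textrm{tr}_\pi=\sum_{g}g^*=k\cdot\textrm{id}$ yields $k\Z\subseteq\textrm{image}(\pi^*)$, not $\textrm{image}(\pi^*)\subseteq k\Z$; writing $\pi^*$ as multiplication by $m$, the transfer relations give only $m\mid k$, and you must still invoke the degree computation in cohomology (via the Chern character, as in your primary argument) to conclude $m=\pm k$.
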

\begin{proof}

Let $X_n = \{(z_1, \ldots, z_n) \in \S^{2n - 1}: z_n = 0 \textrm{ or } \textrm{Arg}(z_n) \textrm{ is a }k\textrm{th root of unity} \}$, so $X_1$ is a finite set and $X_n$, $n \geq 2$, is a union of $k$ closed balls $\overline{\mathbb{B}^{2n - 2}}$ which intersect only on their boundaries. In any case, $X_n$ is an invariant set for any $k$th order coordinate rotation. Let $J_n$ denote the ideal of the fixed point subalgebra $M_d(C(\S^{2n - 1}))^{R_U}$ that consists of matrix functions vanishing on $X_n$. Now, $J_n$ is isomorphic to $C_0(\mathbb{B}^{2n -1})$, since $\S^{2n - 1} \setminus X_n$ is $k$ disjoint copies of $\mathbb{B}^{2n - 1}$ which are orbits of a single ball under $R$. This produces an exact sequence from part of the six-term sequence for $J_n$.

\be\label{eq:theone}
K_1(C_0(\mathbb{B}^{2n - 1})) \xrightarrow{\psi} K_1(M_d(C(\S^{2n - 1}))^{R_U}) \to K_1(M_d(C(X_n))^{R_U})
\ee

We induct on the claim that the final group $K_1(M_d(C(X_n))^{R_U})$ in the sequence is trivial. For the base case $n = 1$, this is trivial for all choices of $R$ and $U$ (of appropriate order) because $X_1$ has $k$ points, and invariant functions on $X_1$ are determined by values at only one point. Now, whenever we know the final group of (\ref{eq:theone}) is trivial, this implies the first map $\psi$ is surjective, and $K_1(M_d(C(\S^{2n - 1}))^{R_U})$ is the surjective image of a cyclic group, making it cyclic as well. Any image of $\psi$ may always be written in the form of a commuting product $G \cdot R_U(G) \cdots {R_U}^{k - 1}(G)$, where $G$ assigns the identity matrix on all but one component of $\S^{2n - 1} \setminus X_n$. All elements of the product are $K_1(C(\S^{2n - 1}))$ equivalent, meaning the product's class in $K_1(C(\S^{2n - 1}))$ must lie in $k \Z$. There is always an example of a $qd \times qd$ matrix $M$ which is $R_U$-invariant, invertible, and represented by $k \not= 0$ in $K_1(C(\S^{2n - 1}))$, so this implies that the induced map $K_1(M_d(C(\S^{2n - 1}))^{R_U}) \to K_1(C(\S^{2n - 1}))$ from inclusion is an injective map between \textit{infinite} cyclic groups, with image exactly $k \Z$.

To complete the induction, assume for a fixed $n$ that the final group of $(\ref{eq:theone})$ is trivial for all coordinate rotations and unitaries $U$ of the appropriate order. Let $S$ be a rotation on $C(\S^{2n + 1})$ of order $k \geq 2$, with $R$ denoting the rotation on $C(\S^{2n - 1})$ from restricting $S$ via the inclusion $(z_1, \ldots, z_n) \mapsto (z_1, \ldots, z_n, 0)$. Note that since $X_{n+1}$ is the union of $k$ copies of $\overline{\mathbb{B}^{2n}}$ which overlap only on their boundaries, it is not hard to show that $M_d(C(X_{n+1}))^{S_U}$ is isomorphic to $\{F \in M_d(C(\overline{\mathbb{B}^{2n}})): F|_{\S^{2n -1}} \textrm{ is invariant under } R_U\}$. Again, examine part of a six term sequence.

\beu
K_1(C_0(\mathbb{B}^{2n})) \to K_1(M_d(C(X_{n + 1}))^{S_U}) \xrightarrow{\phi} K_1(M_d(C(\S^{2n - 1}))^{R_U})
\eeu

The inductive assumption shows that the final group is infinite cyclic and realized as an injective image into $K_1(C(\S^{2n - 1}))$ via the obvious map. This immediately implies that $\phi$ is trivial, since every image of $\phi$ comes from the boundary data of a function on $\overline{\mathbb{B}^{2n}}$. Since $K_1(C_0(\mathbb{B}^{2n}))$ is also trivial, it follows that $K_1(M_d(C(X_{n + 1}))^{S_U})$ is trivial, and the induction is complete.
\end{proof}

The above lemma is what one would expect given the $\Z_2$ case, where the additional conjugation by $U$ is treated as merely a technical annoyance. If $U$ is the identity matrix, the computations include terms for the odd $K$-theory of lens spaces. Moreover, the role of functions on a closed ball with boundary symmetry is somewhat reminiscent of the discussion in section 6.2 of \cite{ma03} (which proves a generalization from \cite{do83} of the Borsuk-Ulam theorem to other free actions by groups on $\S^k$), although the context and conclusions are different.

\begin{thm}\label{thm:ZnNCcase}
If $U \in \mathcal{U}_d(\C)$ is a scalar unitary whose order divides $k$ (the order of the rotation $R$), and $M \in GL_d(C(\S^{2n - 1}_\rho))$ is an invertible matrix with $R_U(M) = M$, then the $K_1(C(\S^{2n - 1}_\rho))$ class of $M$ is in $k \Z$. 
\end{thm}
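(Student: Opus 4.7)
The plan is to mimic the proof of Lemma \ref{lem:evensallrho}, replacing the antipodal fixed point subalgebra $C(\S^{2n-1})^{\Z_2}$ with the fixed point subalgebra of $R_U$ acting on the matrix algebra $M_d(C(\S^{2n - 1}))$. The key observation is that the rotation $R$ and the conjugation by the scalar unitary $U$ both commute with the $\R^n$ action of coordinate-wise rotation that defines the Rieffel deformation $C(\S^{2n - 1}_\rho) = C(\S^{2n - 1})_J$. Indeed, $R$ itself factors through this torus action, and $U$ has scalar entries so its conjugation action on $M_d(\,\cdot\,)$ commutes with the entry-wise $\R^n$ action on $M_d(C(\S^{2n - 1}))$. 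Consequently, the fixed point subalgebra $B := M_d(C(\S^{2n - 1}))^{R_U}$ is itself $\R^n$-equivariant, and the inclusion $\iota: B \hookrightarrow M_d(C(\S^{2n - 1}))$ is $\R^n$-equivariant.

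Next, I would form the Rieffel deformation $B_J$ and identify it with $M_d(C(\S^{2n - 1}_\rho))^{R_U}$. Since the $\Z_k$ action $R_U$ commutes with the deforming $\R^n$ action, it descends to an action on $M_d(C(\S^{2n-1}_\rho))$, and taking fixed points commutes with Rieffel deformation. Under this identification, the deformed inclusion $\iota_J: B_J \to M_d(C(\S^{2n - 1}_\rho))$ is exactly the inclusion of the fixed point subalgebra into the deformed matrix algebra.

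Now I apply Corollary \ref{cor:natdef} to the equivariant inclusion $\iota$, producing the commutative diagram
\begin{center}
\begin{tikzpicture}
  \matrix (m) [matrix of math nodes,row sep=3em,column sep=4em,minimum width=2em]
  {
     K_1(B) & & K_1(B_J) \\
     K_1(M_d(C(\S^{2n - 1}))) & & K_1(M_d(C(\S^{2n - 1}_\rho)))\\};
  \path[-stealth]
    (m-1-1) edge node [left] {$\iota_*$} (m-2-1)
            edge node [above] {$\cong$} (m-1-3)
    (m-2-1.east|-m-2-2) edge node [above] {$\cong$} (m-2-3)
    (m-1-3) edge node [right] {$(\iota_J)_*$} (m-2-3);
\end{tikzpicture}
\end{center}
in which the horizontal arrows are the natural $K_1$ isomorphisms from the deformation. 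By Lemma \ref{lem:commZnfixed}, the image of $\iota_*$ in $K_1(M_d(C(\S^{2n - 1}))) \cong \Z$ is exactly $k\Z$. Commutativity of the square then forces the image of $(\iota_J)_*$ in $K_1(M_d(C(\S^{2n - 1}_\rho))) \cong \Z$ to also be $k\Z$. Since any $R_U$-fixed invertible $M \in GL_d(C(\S^{2n - 1}_\rho))$ lies in $B_J$, its $K_1$ class is in the image of $(\iota_J)_*$, hence in $k\Z$.

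The main obstacle is the bookkeeping identification $(M_d(C(\S^{2n-1}))^{R_U})_J \cong M_d(C(\S^{2n-1}_\rho))^{R_U}$, which requires knowing that Rieffel deformation commutes with taking fixed points of a compatible action. This is a standard consequence of the spectral subspace description of the deformation construction: since $R_U$ preserves the spectral subspaces for the deforming $\R^n$ action, it preserves the deformed product, and its fixed point subspace is the same subspace of elements whether one computes before or after deformation. Once this identification is in hand, the result is a direct translation of the commutative lemma through the naturality of the deformation isomorphism.
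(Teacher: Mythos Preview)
Your proposal is correct and follows exactly the approach the paper takes: the paper's proof simply notes that $R_U$ commutes with the entrywise $\R^n$ action, so one may deform the fixed point subalgebra and repeat the argument of Lemma \ref{lem:evensallrho} verbatim, invoking Lemma \ref{lem:commZnfixed} for the commutative input and Corollary \ref{cor:natdef} for naturality. You have even spelled out the bookkeeping identification $(M_d(C(\S^{2n-1}))^{R_U})_J \cong M_d(C(\S^{2n-1}_\rho))^{R_U}$ that the paper leaves implicit.
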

\begin{proof}
The action $R_U$ commutes with the (entrywise) $\R^d$ action on $M_d(C(\S^{2n - 1}))$, so we may deform the fixed point subalgebra $M_d(C(\S^{2n - 1}))^{R_U}$ using the restricted action. With the previous lemma establishing the commutative case, the proof is essentially identical to that of Lemma \ref{lem:evensallrho}.
\end{proof}

The unitary conjugation present in this section's results serves to solve the following dilemma. For rotations of order $k > 2$, the $K_1$ generator of the noncommutative sphere is not homogeneous, so when we consider a homomorphism $\Phi$ between two spheres, it is difficult to construct a matrix fixed by $R$. We can, however, easily find a matrix fixed by a specific action $R_U(M) = U R(M) U^*$, as in the following corollary.

\begin{cor}\label{cor:ZnBU}
Suppose $\Phi: C(\S^{2n - 1}_\rho) \to C(\S^{2n - 1}_\omega)$ is a unital $*$-homomorphism satisfying $R \circ \Phi = \Phi \circ R$. Here $R$ (on either sphere) denotes a rotation map defined in (\ref{rotationmap}) for the same list of primitive $k$th roots of unity $\alpha_1, \ldots, \alpha_n$, where $k \geq 2$. Then $\Phi_*$ is nontrivial on $K_1$. Specifically, it is given by multiplication by an integer congruent to $1$ mod $k$.
\end{cor}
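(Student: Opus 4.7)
The plan is to manufacture an $R_U$-invariant invertible matrix out of $\Phi$ and then apply Theorem \ref{thm:ZnNCcase} directly. The starting point is equation \eqref{inconhom}, which gives scalar diagonal unitaries $A, B \in \mathcal{U}_{2^{n-1}}(\C)$, independent of the parameter matrix, with $A^k = B^k = I$ and $R(Z_\rho(n)) = A\, Z_\rho(n)\, B$, and the same identity with $\omega$ in place of $\rho$.

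First I would form the invertible matrix
\beu
M := Z_\omega(n)^{*}\, \Phi(Z_\rho(n)) \in GL_{2^{n-1}}(C(\S^{2n-1}_\omega)),
\eeu
which makes sense because $Z_\omega(n)$ is unitary and $\Phi$ preserves invertibility. Using the equivariance $R \circ \Phi = \Phi \circ R$ and the fact that $A, B$ have scalar entries (so $\Phi$ applied entrywise commutes with left- and right-multiplication by $A$ and $B$), I would compute
\beu
R(M) \;=\; R(Z_\omega(n))^{*}\,\Phi(R(Z_\rho(n))) \;=\; (A\,Z_\omega(n)\,B)^{*}\,\Phi(A\,Z_\rho(n)\,B) \;=\; B^{*}\, M\, B.
\eeu
Setting $U := B^{*}$, a scalar unitary whose order divides $k$, this rewrites as $R_U(M) = U^{*} R(M)\, U = B\cdot B^{*} M B\cdot B^{*} = M$.

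Next I would feed this into Theorem \ref{thm:ZnNCcase}, which says that any $R_U$-invariant invertible matrix over $C(\S^{2n-1}_\omega)$ has $K_1$ class in $k\Z$. Hence $[M] \in k\Z$. Finally, since $[Z_\omega(n)] = 1$ generates $K_1(C(\S^{2n-1}_\omega)) \cong \Z$ and $\Phi_{*}$ is multiplication by some integer $a$, we have $[\Phi(Z_\rho(n))] = a$, so
\beu
[M] \;=\; -[Z_\omega(n)] + [\Phi(Z_\rho(n))] \;=\; a - 1 \;\in\; k\Z,
\eeu
which is exactly $a \equiv 1 \pmod k$, in particular nonzero.

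The main obstacle, compared to the $\Z_2$ case in Corollary \ref{cor:BU}, is that $Z_\rho(n)$ is no longer homogeneous under $R$ for $k>2$, so one cannot simply observe that $Z_\omega(n)^* \Phi(Z_\rho(n))$ is $R$-invariant. Equation \eqref{inconhom} shows that the failure of homogeneity is controlled by the fixed scalar unitaries $A$ and $B$, and the key bookkeeping step is noticing that the $A$ on the left cancels (because the two copies of $Z$ sit in opposite orientations in $M$) while the $B$ on the right survives, so that the residual twist is precisely the conjugation by a scalar $U$ that Theorem \ref{thm:ZnNCcase} is designed to absorb.
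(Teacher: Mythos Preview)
Your argument is correct and is essentially identical to the paper's proof: form $M = Z_\omega(n)^*\,\Phi(Z_\rho(n))$, use \eqref{inconhom} and equivariance to obtain $R(M) = B^* M B$, recognize this as $R_U$-invariance for $U = B^*$, and invoke Theorem \ref{thm:ZnNCcase}. The only cosmetic difference is that you explicitly name $U$ and spell out $[M] = a - 1$, whereas the paper phrases the invariance as ``fixed by $M \mapsto B R(M) B^*$'' and concludes directly.
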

\begin{proof}
Equation (\ref{inconhom}) gives that $R(Z_\rho(n)) = A Z_\rho(n) B$ and $R(Z_\omega(n)) = A Z_\omega(n) B$, where $A$ and $B$ are diagonal unitaries with scalar entries that do not change with the sphere parameter, and further $A^k = B^k = I$. Since $\Phi$ is a unital $*$-homomorphism and respects the rotation maps, this implies that

\beu \ba
R(Z_\omega(n)^* \cdot \Phi(Z_\rho(n))) 	&= R(Z_\omega(n))^* \cdot \Phi ( R( Z_\rho(n))) \\
							&= (A Z_\omega(n) B)^* \cdot \Phi( A Z_\rho(n) B) \\
							&= B^* Z_\omega(n)^* A^* \cdot A \Phi(Z_\rho(n)) B \\
							&= B^* (Z_\omega(n)^* \cdot \Phi(Z_\rho(n))) B \\
\ea \eeu
and $Z_\omega(n)^* \cdot \Phi(Z_\rho(n))$ is fixed by the operation $M \mapsto B R(M) B^*$. Since $B$ is a unitary over $\C$ with $B^k = I$, by the previous theorem the $K_1$ class of $Z_\omega(n)^* \cdot \Phi(Z_\rho(n))$ is in $k\Z$. Finally, the $K_1$ class of $\Phi(Z_\rho(n))$ is congruent to $1$ mod $k$.
\end{proof}

The above corollary is a noncommutative version of the following fact: if $\alpha_1, \ldots, \alpha_n$ are primitive $k$th roots of unity ($k \geq 2$), and $f: \S^{2n - 1} \to \S^{2n - 1}$ is continuous and respects the rotation $(z_1, \ldots, z_n) \mapsto (\alpha_1 z_1, \ldots, \alpha_n z_n)$, then $f$ is homotopically nontrivial. Just as in the $\Z_2$ case, there is a consequence regarding spheres of different dimensions. Specifically, there exists no $g: \S^{2n - 1} \to \S^{2n - 3}$ which is continuous and equivariant for the rotations $(z_1, \ldots, z_n) \mapsto (\alpha_1 z_1, \ldots, \alpha_n z_n)$ and $(z_1, \ldots, z_{n-1}) \mapsto (\alpha_1 z_1, \ldots, \alpha_{n-1} z_{n-1})$. This result is only necessary to state when $k$ is an odd prime (which gives the result when $k$ is not prime, but has an odd prime divisor), as we have already stated the usual Borsuk-Ulam theorem for $k = 2$. In \cite{ta90}, Z. Tang showcases a proof of this topological result using the reduced $K$-theory of lens spaces; in contrast, our proofs work entirely with $K_1$. The nonexistence of equivariant $g: \S^{2n - 1} \to \S^{2n - 3}$ can also be shown using homology; see \cite{ko86} for this type of approach (and a generalization). For noncommutative spheres, the associated result is as follows.

\begin{cor}\label{cor:orderBUdim}
Suppose $k \geq 2$ and $\alpha_1, \ldots, \alpha_n$ are primitive $k$th roots of unity. If $\Psi: C(\S^{2n - 3}_\omega) \to C(\S^{2n - 1}_\rho)$ is a unital $*$-homomorphism, then $R \circ \Psi \not= \Psi \circ R^\prime$, where $R$ denotes the rotation map for $\alpha_1, \ldots, \alpha_n$ and $R^\prime$ denotes the rotation map for the first $n - 1$ of these scalars $\alpha_1, \ldots, \alpha_{n - 1}$.
\end{cor}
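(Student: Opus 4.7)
The plan is to reduce to Corollary \ref{cor:ZnBU} in the style of Corollary \ref{cor:dimensionNCBU}: assuming $\Psi$ is equivariant, I would precompose it with an equivariant, $K_1$-trivial projection from a Natsume-Olsen sphere of the same dimension as the codomain, producing a homomorphism that contradicts Corollary \ref{cor:ZnBU}.

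Assume for contradiction that $R \circ \Psi = \Psi \circ R'$. Fix an $n \times n$ parameter matrix $\mathcal{P}$ whose upper left $(n - 1) \times (n - 1)$ block equals $\omega$ (for concreteness, take the last row and column to be $1$'s). The universal property of $C(\S^{2n - 1}_{\mathcal{P}})$ produces a unital $*$-homomorphism
\beu
\pi: C(\S^{2n - 1}_{\mathcal{P}}) \to C(\S^{2n - 3}_\omega), \qquad z_i \mapsto z_i \ (i < n), \quad z_n \mapsto 0,
\eeu
because every defining relation of $C(\S^{2n - 1}_{\mathcal{P}})$ that involves $z_n$ collapses to $0 = 0$, and the sphere relation reduces to the sphere relation of $C(\S^{2n - 3}_\omega)$. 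Checking on generators shows $\pi \circ R = R' \circ \pi$ (with $R$ on the domain using the full list $\alpha_1, \ldots, \alpha_n$). Consequently $\Phi := \Psi \circ \pi: C(\S^{2n - 1}_{\mathcal{P}}) \to C(\S^{2n - 1}_\rho)$ satisfies $\Phi \circ R = R \circ \Phi$, so Corollary \ref{cor:ZnBU} forces $\Phi_*$ on $K_1 \cong \mathbb{Z}$ to be multiplication by an integer congruent to $1$ modulo $k$, which is nonzero since $k \geq 2$.

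To obtain the contradiction I would show $\pi_* = 0$. Using the recursion
\beu
Z_{\mathcal{P}}(n) = \begin{bmatrix} Z_{\mathcal{P}}(n - 1) & z_n D_1 \\ -z_n^* D_2 & Z_{\mathcal{P}}(n - 1)^* \end{bmatrix}
\eeu
along with the compatibility $Z_{\mathcal{P}}(n - 1) = Z_\omega(n - 1)$ (valid because the upper left $(n - 1) \times (n - 1)$ blocks of $\mathcal{P}$ and $\omega$ coincide), applying $\pi$ entrywise yields
\beu
\pi(Z_{\mathcal{P}}(n)) = \begin{bmatrix} Z_\omega(n - 1) & 0 \\ 0 & Z_\omega(n - 1)^* \end{bmatrix},
\eeu
whose $K_1$ class equals $[Z_\omega(n - 1)] + [Z_\omega(n - 1)^*] = 0$. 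Hence $\Phi_* = \Psi_* \circ \pi_* = 0$, contradicting Corollary \ref{cor:ZnBU}.

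The main obstacle to anticipate is that the even-dimensional intermediate used in Corollary \ref{cor:dimensionNCBU}, which routes through a $K_1$-trivial even $\theta$-deformed sphere via $z_n \mapsto x$, is unavailable here: the central self-adjoint generator $x$ admits only the rotations $x \mapsto \pm x$, incompatible with an $\alpha_n$-rotation when $k \geq 3$. Substituting $z_n \mapsto 0$ preserves equivariance but forces me to verify $K_1$-triviality directly via the recursive structure of the generator $Z_{\mathcal{P}}(n)$, which the coefficient compatibility built into the choice of $Z_\rho(n)$ handles cleanly.
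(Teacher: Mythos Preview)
Your argument is correct and essentially identical to the paper's own proof: both precompose with the projection $\pi: C(\S^{2n-1}_{\mathcal{P}}) \to C(\S^{2n-3}_\omega)$ sending $z_n \mapsto 0$, verify equivariance on generators, and compute $\pi(Z_{\mathcal{P}}(n)) = Z_\omega(n-1) \oplus Z_\omega(n-1)^*$ to see $\pi_* = 0$, contradicting Corollary~\ref{cor:ZnBU}. Your closing remark explaining why the even-sphere route of Corollary~\ref{cor:dimensionNCBU} is unavailable for odd $k$ is a nice addition not spelled out in the paper.
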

\begin{proof} 
Suppose $\Psi$ satisfies $R \circ \Psi = \Psi \circ R^\prime$. Choose an $n \times n$ parameter matrix $\Omega$ which contains $\omega$ in the upper left, and let $\pi: C(\S^{2n - 1}_\Omega) \to C(\S^{2n - 3}_\omega)$ be the map defined by $z_i \mapsto z_i$ for $i \leq n - 1$ and $z_n \mapsto 0$. The map $\pi$ is $K_1$-trivial because $\pi(Z_\Omega(n)) = \left[ \begin{array}{cc} Z_\omega(n - 1) & 0 \\ 0 & Z_\omega(n - 1)^* \end{array}\right]$, which is equivalent in $K_1(C(\S^{2n - 3}_\omega))$ to $Z_\omega(n - 1) Z_\omega(n - 1)^* = I$, the trivial element. Moreover, the homogeneity classes of $\pi(z_i)$ show that $R^\prime \circ \pi = \pi \circ R$, so $\Phi = \Psi \circ \pi: C(\S^{2n - 1}_\Omega) \to C(\S^{2n - 1}_\rho)$ is $K_1$-trivial and has $\Phi \circ R = R \circ \Phi$. This contradicts Corollary \ref{cor:ZnBU}.
\end{proof}

If $k = 2$, this is not the full strength of the Borsuk-Ulam theorem, which is instead found in Corollary \ref{cor:dimensionNCBU}. Similarly, if $k$ is even, equivariant maps for order $k$ rotations are also equivariant for the antipodal map, so the antipodal results are often preferable. However, if $k$ is odd, the map $R$ on $C(\S^{2n - 1}_\rho)$ relies heavily on the complex coordinates $z_i$, so the results cannot be stated using the even dimensional spheres. In this sense both Corollary \ref{cor:ZnBU} and Corollary \ref{cor:orderBUdim} may be viewed as full-strength noncommutative $\Z_k$ Borsuk-Ulam theorems when $k$ is odd.

\section{Acknowledgments}

It is an understatement to say none of this would have been possible without my advisors, John McCarthy and Xiang Tang. I must also thank Xiang in particular for explaining \cite{ta90}, as I am unable to read Chinese. Further, my time spent browsing questions and answers on Math Stack Exchange and Math Overflow has proved invaluable; I am deeply indebted to the userbases of both sites. Without these factors, my foray into algebraic topology and noncommutative geometry would have been far less smooth. Moreover, I would also like to thank the reviewer for significant simplification of arguments.

\bibliography{references}
\end{document}